\documentclass[letterpaper,11pt,UKenglish,cleveref, autoref, thm-restate]{article}
\usepackage{fullpage}
\usepackage{amsmath,amsfonts,amsbsy,amsgen,amscd,mathrsfs,amssymb,amsthm,comment}
\usepackage{enumerate,mathtools}
\usepackage{authblk}
%smart symbol bolding
\usepackage{bm}

\usepackage{amsmath, amsthm, amssymb, mathtools, graphicx, hyperref, color, braket, wasysym, geometry, bbm, color, xfrac, multirow}
\usepackage{tikz}
\usetikzlibrary{shadings,matrix,fit,decorations.pathreplacing,calc}

\usepackage{enumitem}
\setlist{parsep=0pt,listparindent=\parindent}
\usepackage{ wasysym }
\bibliographystyle{plainurl}% the mandatory bibstyle

\title{Quantum and classical low-degree learning via a dimension-free Remez inequality}
% \author[1]{---}
\author[1]{Ohad Klein}
\author[2]{Joseph Slote}
\author[3]{Alexander Volberg}
\author[4]{Haonan Zhang}
\affil[1]{\small School of Computer Science and Engineering, Hebrew University\protect\\ {\footnotesize\texttt{ohadkel@gmail.com}}}
\affil[2]{Department of Computing and Mathematical Sciences, California Institute of Technology \protect\\ {\footnotesize\texttt{jslote@caltech.edu}}}
\affil[3]{Department of Mathematics, Michigan State University\protect\\
Hausdorff Center for Mathematics, University of Bonn\protect\\ {\footnotesize\texttt{volberg@msu.edu}}}
\affil[4]{Department of Mathematics, University of South Carolina\protect\\ {\footnotesize\texttt{haonanzhangmath@gmail.com}}}
\date{}

\usepackage{braket,bbm}
% for the BH inequality tables
\usepackage{multirow,hhline}
% for the figure
\usepackage{tikz}
\usetikzlibrary{matrix,fit,decorations.pathreplacing,calc}

\newcommand{\E}{\mathop{{}\mathbb{E}}}
\newcommand{\R}{\mathbb{R}}
\newcommand{\Z}{\mathbb{Z}}
\newcommand{\C}{\mathbb{C}}
\newcommand{\BH}{\mathrm{BH}}
\newcommand{\un}{\mathbf{I}}

\newcommand{\ketbra}[2]{\ket{#1}\!\!\bra{#2}}

% to do--pick better letter than Gamma?
\newcommand{\renorm}{\Gamma\!}

% \newcommand{\trnorm}[1]{\left\lVert#1\right\rVert_\text{tr}}

%%%%%%%%%%%%%%%%%%

\newcommand{\wt}[1]{\widetilde{#1}}

\newcommand{\bA}{\mathbf{A}}
\newcommand{\bB}{\mathbf{B}}
\newcommand{\bC}{\mathbf{C}}

\newcommand{\eps}{\varepsilon}

\newcommand{\bbt}{\mathbb{T}}

\newcommand{\al}{\alpha}

\newcommand{\cA}{\mathcal{A}}

\newcommand{\cW}{\mathcal{W}}

\newcommand{\Om}{\Omega}
\newcommand{\om}{\omega}

%group order
\newcommand{\go}{K}

\newcommand{\tr}{\textnormal{tr}}

\newtheorem{theorem}{Theorem}
\newtheorem{proposition}[theorem]{Proposition}

\newtheorem{lemma}[theorem]{Lemma}
\newtheorem{corollary}[theorem]{Corollary}

\theoremstyle{definition}
\newtheorem{definition}[theorem]{Definition}
\newtheorem{remark}[theorem]{Remark}

\newtheorem*{question*}{Question}

\newtheorem{claim}{Claim}

\definecolor{tealcolor}{RGB}{0, 127, 255}

\newcommand*{\claimproofname}{Proof of claim}
\newenvironment{claimproof}[1][\claimproofname]{\begin{proof}[#1]}{\end{proof}}

\newcommand{\iid}{\overset{\text{\textsf{iid}}}{\sim}}
\DeclareMathOperator{\supp}{supp}

% A short minus sign for the figure.
\DeclareMathSymbol{\shortminus}{\mathbin}{AMSa}{"39}

\begin{document}

\maketitle

%TODO mandatory: add short abstract of the document
\begin{abstract}
Recent efforts in Analysis of Boolean Functions aim to extend core results to new spaces, including to the slice $\binom{[n]}{k}$, the hypergrid $[\go]^n$, and noncommutative spaces (matrix algebras).
We present here a new way to relate functions on the hypergrid (or products of cyclic groups) to their harmonic extensions over the polytorus.
We show the supremum of a function $f$ over products of the cyclic group $\{\exp(2\pi i k/\go)\}_{k=1}^\go$ controls the supremum of $f$ over the entire polytorus $(\{z\in\C:|z|=1\}^n)$, with multiplicative constant $C$ depending on $\go$ and $\deg(f)$ only.
This Remez-type inequality appears to be the first such estimate that is dimension-free (\emph{i.e.,} $C$ does not depend on $n$).

This dimension-free Remez-type inequality removes the main technical barrier to giving $\mathcal{O}(\log n)$ sample complexity, polytime algorithms for learning low-degree polynomials on the hypergrid and low-degree observables on level-$\go$ qudit systems.
In particular, our dimension-free Remez inequality implies new Bohnenblust--Hille-type estimates which are central to the learning algorithms and appear unobtainable via standard techniques.
Thus we extend to new spaces a recent line of work \cite{EI22, CHP, VZ22} that gave similarly efficient methods for learning low-degree polynomials on the hypercube and observables on qubits.

An additional product of these efforts is a new class of distributions over which arbitrary quantum observables are well-approximated by their low-degree truncations---a phenomenon that greatly extends the reach of low-degree learning in quantum science \cite{CHP}.
\end{abstract}
% \vspace{5em}
% \begin{center}
% \footnotesize \emph{Note:} this article is an updated and expanded version of the preprint \href{https://arxiv.org/abs/2301.01438}{arXiv:2301.01438}.
% \end{center}

\section{Introduction}

\subsection{Motivation: quantum and classical low-degree learning}
\label{sec:intro}
Recall that any function $f:\{-1,1\}^n\to \R$ admits a unique Fourier expansion 
\[\textstyle f(x)=\sum_{S\subseteq[n]}\widehat{f}(S)\prod_{i\in S} x_i \quad \text{where}\quad \widehat{f}(S)=\E_{x\sim \{-1, 1\}^n}[f(x)\cdot\prod_{i\in S}x_i]\,.\]
We say $f$ is \emph{of degree $d$} ($\deg(f)\le d$) if for all $S$ with $|S|>d$, $\widehat{f}(S)=0$.

Low-degree learning---which we shall take to mean learning an $L_2$ approximation to $f$ with $\deg(f)\leq d$ and $|f|\le 1$ from uniformly random samples---is a fundamental task in computer science \cite{Odonnell}.
For a long time, the best polytime algorithm for low-degree learning had a sample complexity exponentially separated in $n$ from what was information theoretically-required (poly($n$) vs. $\mathcal{O}(\log n)$) \cite{LMN,fouriergrowth}.
But in 2022 Eskenazis and Ivanisvili closed the gap \cite{EI22}, achieving a sample complexity of $\mathcal{O}(\log n)$ for the first time.

Key to their argument is an observation about approximating sparse vectors.
Suppose one holds a vector $w\in \mathbb{R}^n$ that is an $\ell_\infty$ approximation to some unknown vector $v\in \mathbb{R}^n$ (say, $\|v-w\|_\infty\leq \eps$).
Then it could be that $\|v-w\|_2$ grows unboundedly with $n$.
However, if a constant $\ell_p$ bound on $v$ for $p<2$ is promised, it is possible to modify $w$ in a simple way to obtain a new approximation $\widetilde{w}$ to $v$ with $\|\widetilde{w}-v\|_2$ independent of $n$ (and controlled by $\eps$).
This is possible because the $\ell_p$ bound on $v$ implies $v$ is approximately sparse---and in particular that many of its coordinates are small in comparison to $\eps$.
Using this observation \cite{EI22} shows that zeroing-out coordinates in $w$ below a fixed threshold gives a suitable $\widetilde{w}$.

In the context of low-degree learning, $v$ is the vector of true low-degree Fourier coefficients of $f$, while $w$ is a vector of empirical Fourier coefficients collected through the familiar technique of Fourier sampling \cite{LMN,Odonnell}.
Modifying $w\mapsto\widetilde{w}$ as above we may form the approximate function $\widetilde{f}$ and with Plancherel's theorem conclude $\|f-\widetilde{f}\|_2 = \|v-\widetilde{w}\|_2$ is small.

Thus the Eskenazis--Ivanisvili argument reduces learning low-degree polynomials to finding an $\ell_p, p<2$ bound on the Fourier coefficients of $f$.
Inequalities of this kind are known as Bohnenblust--Hille-type inequalities, and the state of the art for polynomials over the hypercube was proved recently in \cite{DMP}:
\begin{theorem}[Hypercube Bohnenblust--Hille \cite{DMP}]
    \label{thm:hypercube-BH}
    For any $d\ge 1$, there exists a constant $C_d>0$ such that for all $n \ge 1$ and all $f:\{-1,1\}^n\to \R$ with $\deg(f)\le d$, we have
    \[\|\widehat{f}\|_{\frac{2d}{d+1}}:= \Big(\sum_{S\subseteq [n]}|\widehat{f}(S)|^{\frac{2d}{d+1}}\Big)^{\frac{d+1}{2d}}\leq C_d \|f\|_\infty\,.\]
    Moreover, there exists a universal constant $c>0$ such that $C_d\le c^{\sqrt{d\log d}}$.
\end{theorem}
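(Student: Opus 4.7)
The plan is to follow the strategy of Defant--Mastyło--Pérez, who adapted the hypercontractive techniques of Bayart--Pellegrino--Seoane-Sepúlveda from the polytorus to the Boolean cube. The argument rests on three pillars: reduction to the homogeneous case via Walsh projections, an application of a Blei-type mixed-norm inequality to the coefficient tensor, and a use of hypercontractivity to bound the resulting mixed norms in terms of $\|f\|_\infty$.

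First, split $f$ into its Walsh-homogeneous parts $f_k = \sum_{|S|=k}\widehat f(S)\chi_S$. A Rademacher-projection argument shows that each $\|f_k\|_\infty$ is controlled by $\|f\|_\infty$ up to a factor depending only on $d$, so it suffices to establish the BH inequality for each $f_k$. Focusing on $k=d$ and writing $g = f_d$, one associates to $g$ its symmetric coefficient tensor $T_{i_1,\ldots,i_d} = \widehat g(\{i_1,\ldots,i_d\})$ (when the $i_j$ are distinct, and zero otherwise). Blei's combinatorial inequality then gives
\[\Big(\sum_{i_1,\dots,i_d}|T_{i_1,\dots,i_d}|^{\frac{2d}{d+1}}\Big)^{\frac{d+1}{2d}} \le \prod_{\ell=1}^d \Big(\sum_{i_\ell}\Big(\sum_{i_1,\dots,\widehat{i_\ell},\dots,i_d}|T_{i_1,\dots,i_d}|^2\Big)^{1/2}\Big)^{1/d}.\]
Each inner $\ell^2$-sum equals (up to combinatorial normalizations) the $L^2$-norm of a slice of $g$ obtained by fixing one Boolean coordinate; the outer $\ell^1$-sum can then be controlled by a Bohnenblust--Hille bound at degree $d-1$, together with the Khintchine--Kahane inequality and the two-to-$q$ hypercontractive inequality $\|h\|_q \le (q-1)^{(d-1)/2}\|h\|_2$ for degree-$(d-1)$ $h$ on the Boolean cube, used to transition between norm types.

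This scheme yields a recursion $C_d \le \kappa(d)\,C_{d-1}$, with $\kappa(d)$ depending on the hypercontractive parameter $q$ and the Blei combinatorial constant. The main obstacle, and the technical innovation of DMP, is to balance $q$ jointly across all recursive steps: a naive choice yields only $C_d \le c^d$, but a careful joint optimization---keeping hypercontractivity near its scale-invariant regime at each level---trades the exponential cost for the subexponential $c^{\sqrt{d\log d}}$ rate promised by the theorem.
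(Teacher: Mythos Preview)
The paper does not contain its own proof of this statement; Theorem~\ref{thm:hypercube-BH} is quoted from \cite{DMP} and used as a black box (for instance in deriving Corollary~\ref{cor:bh cyclic groups} and in the proof of Theorem~\ref{thm:GM}). There is therefore nothing in the paper to compare your proposal against.

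As a sketch of the argument in the cited reference, your outline is broadly faithful to the Blei-plus-hypercontractivity scheme of \cite{BPS,DMP}. One point deserves care, however. Your first move, ``a Rademacher-projection argument shows that each $\|f_k\|_\infty$ is controlled by $\|f\|_\infty$ up to a factor depending only on $d$,'' is not as cheap on the Boolean cube as it is on the polytorus. On $\mathbb T^n$ one isolates the $k$-homogeneous part via $f_k(z)=\int_0^1 f(e^{2\pi i\theta}z)e^{-2\pi i k\theta}\,d\theta$ at no cost in sup norm; on $\{-1,1\}^n$ there is no such free dilation, and the natural substitute (interpolate the noise-operator polynomial $\rho\mapsto T_\rho f=\sum_k\rho^k f_k$ on $[-1,1]$ and extract the $k$th coefficient) incurs a factor exponential in $d$, which would downgrade the final constant from $c^{\sqrt{d\log d}}$ to $c^{d}$. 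The proof in \cite{DMP} avoids this by running the Blei/hypercontractivity recursion directly on the inhomogeneous polynomial rather than first passing to homogeneous components. If you wish to keep the homogeneous reduction in your write-up, you should either justify that the extraction cost is subexponential or explain how it is absorbed in the subsequent optimization; otherwise the sketch as written only delivers $C_d\le c^d$.
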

\noindent
%The choice of $p=2d/2(d+1)$ is in fact optimal---this is the Kahane--Salem--Zygmund lemma (see \cite[\S 7.3]{DGMS}).
For our purposes the critical feature of Theorem \ref{thm:hypercube-BH} is the dimension-free-ness of the estimate.
Bohnenblust--Hille (BH) inequalities were originally proved for analytic polynomials over the polytorus $\bbt^n=\{z\in\C:|z|=1\}^n$ and have a long history in harmonic analysis
\cite{DS}.

One might ask if a similar approach to low-degree learning could work in the quantum world.
Quantum observables (Hermitian operators) on a system of $n$ qubits admit a ``Fourier-like'' decomposition 
\[
\cA = \sum_{\alpha\in\{0,1,2,3\}^n}\widehat{\cA}(\alpha)\sigma_\alpha \quad \text{where} \quad \sigma_\alpha = {\textstyle\bigotimes_{i=1}^n} \sigma_{\alpha_i}
\quad\text{and}\quad \widehat{\cA}(\alpha)=2^{-n}\mathop{\mathrm{tr}}[\cA\cdot\sigma_\alpha]\,.\]
Here $\sigma_0$ is the  2-by-2 identity matrix and $\sigma_i, 1\leq i \leq 3$ are the Pauli matrices
\[
\sigma_1=\begin{bmatrix}
0 & 1\\1 & 0
\end{bmatrix},\quad
\sigma_2=\begin{bmatrix}
0 & -i\\i & 0
\end{bmatrix},\quad
\sigma_3=\begin{bmatrix}
1 & 0\\0 & -1
\end{bmatrix}.\]
Defining $|\alpha|$ to be the number of nonzero entries in $\alpha$, we say $\cA$ is of degree $d$ if for all $\alpha$ with $|\alpha|>d$ we have $\widehat{\cA}(\alpha)=0$.
It was recently identified \cite{RWZ22,CHP, VZ22} that this analogy is close-enough to the Boolean case that the Eskenazis--Ivanisvili approach goes through for quantum observables as well, provided a BH-type inequality for Pauli decompositions exists, which was also proved:

\begin{theorem}[Qubit Bohnenblust--Hille \cite{CHP, VZ22}]
    \label{thm:qubit-BH}
    Suppose that $d\ge 1$. Then there exists a constant $C_d>0$ such that for all $n\ge 1$ and all $\cA$ on $n$ qubits of degree $d$, we have
    \[\|\widehat{\cA}\|_{\frac{2d}{d+1}}:=\Big(\sum_{\alpha\in \{0,1,2,3\}^n}|\widehat{\cA}(\alpha)|^{\frac{2d}{d+1}}\Big)^{\frac{d+1}{2d}}\leq C_d\|\cA\|_{\textnormal{op}}\,,\]
    where $\|\cA\|_{\textnormal{op}}$ denotes the operator norm.
\end{theorem}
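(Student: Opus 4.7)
My plan is to reduce Theorem~\ref{thm:qubit-BH} to the classical hypercube case (Theorem~\ref{thm:hypercube-BH}) by evaluating the observable on product eigenstates of tensored Paulis. For each pair $(\beta,s)\in\{1,2,3\}^n\times\{-1,+1\}^n$, set
\[|v^\beta_s\rangle := \bigotimes_{i=1}^n |v^{\beta_i}_{s_i}\rangle,\]
where $|v^j_\pm\rangle$ is the unit eigenvector of the Pauli $\sigma_j$ with eigenvalue $\pm 1$. A factor-by-factor computation shows that $\langle v^\beta_s|\sigma_\alpha|v^\beta_s\rangle$ vanishes whenever some $\alpha_i\notin\{0,\beta_i\}$ (because then $\sigma_{\alpha_i}$ anticommutes with $\sigma_{\beta_i}$ and maps $|v^{\beta_i}_{s_i}\rangle$ into the orthogonal eigenspace), and otherwise equals $\prod_{i:\alpha_i\neq 0} s_i$. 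Hence, writing $\alpha^\beta_S$ for the Pauli index that agrees with $\beta$ on $S$ and is zero off $S$, the classical function $f_\beta(s):=\langle v^\beta_s|\cA|v^\beta_s\rangle$ satisfies
\[f_\beta(s) = \sum_{S\subseteq[n]}\widehat{\cA}(\alpha^\beta_S)\prod_{i\in S}s_i, \qquad \deg f_\beta \le d,\qquad \|f_\beta\|_\infty \le \|\cA\|_{\textnormal{op}}.\]

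Next, I would apply Theorem~\ref{thm:hypercube-BH} to each $f_\beta$ and sum over $\beta$, obtaining
\[\sum_{\beta\in\{1,2,3\}^n}\sum_{S\subseteq[n]}|\widehat{\cA}(\alpha^\beta_S)|^{\frac{2d}{d+1}} \le 3^n\, C_d^{\frac{2d}{d+1}}\,\|\cA\|_{\textnormal{op}}^{\frac{2d}{d+1}}.\]
A given Pauli index $\alpha$ with support $T:=\{i:\alpha_i\neq 0\}$ arises as $\alpha^\beta_S$ precisely when $S=T$ and $\beta|_T=\alpha|_T$, which corresponds to exactly $3^{n-|T|}$ choices of $\beta$. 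Re-indexing the double sum by $\alpha$ and dividing by $3^n$ yields the weighted inequality
\[\sum_{\alpha\in\{0,1,2,3\}^n}3^{-|\supp\alpha|}\,|\widehat{\cA}(\alpha)|^{\frac{2d}{d+1}} \le C_d^{\frac{2d}{d+1}}\,\|\cA\|_{\textnormal{op}}^{\frac{2d}{d+1}}.\]

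I expect the main obstacle to lie in converting this weighted inequality into the desired unweighted one without picking up a factor depending on $n$. Naively, the weight $3^{-|\supp\alpha|}$ could be as small as $3^{-n}$. The escape is that only $\alpha$ with $|\supp\alpha|\le d$ contribute (since $\deg\cA\le d$), so the weight is at least $3^{-d}$, a constant depending on $d$ alone. Multiplying through by $3^d$ and taking $\frac{d+1}{2d}$-powers recovers
\[\|\widehat{\cA}\|_{\frac{2d}{d+1}} \le 3^{(d+1)/2}\, C_d\, \|\cA\|_{\textnormal{op}},\]
as required. In effect, it is the same \emph{degree-bounds-support} phenomenon underlying the dimension-free Remez inequality of this paper that allows the reduction to collapse to a dimension-free bound.
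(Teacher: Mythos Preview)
Your argument is correct. Note that Theorem~\ref{thm:qubit-BH} is cited from \cite{CHP,VZ22} rather than proved directly in the paper; the closest thing to a ``paper's own proof'' is the $\go=2$ specialization of the Gell-Mann argument (Theorem~\ref{thm:GM} and Lemma~\ref{action}), which the paper states recovers \cite{VZ22} exactly with constant $3^d\,\BH^{\le d}_{\{\pm1\}}$.

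Your route is genuinely different from that one. The paper builds, for each bit-string in $\{-1,1\}^{3n}$, a single \emph{mixed} state $r(x,y,z)$ (a uniform mixture of Pauli eigenstates in each qubit) so that $\tr[\cA\,r(\vec x,\vec y,\vec z)]$ is \emph{one} polynomial on the enlarged cube $\{-1,1\}^{3n}$ whose Fourier coefficients are $3^{-|\alpha|}\widehat{\cA}(\alpha)$; a single application of hypercube BH then yields the factor $3^d$. You instead evaluate $\cA$ on \emph{pure} eigenstates, obtaining $3^n$ functions $f_\beta$ on the original cube $\{-1,1\}^n$, apply hypercube BH to each, and average. Because in your scheme each $\widehat{\cA}(\alpha)$ appears with coefficient $1$ in $3^{n-|\alpha|}$ of the $f_\beta$'s (rather than once with coefficient $3^{-|\alpha|}$), the averaging produces the weight $3^{-|\alpha|}$ on $|\widehat{\cA}(\alpha)|^{2d/(d+1)}$ rather than $3^{-|\alpha|\cdot\frac{2d}{d+1}}$, and the degree cap $|\alpha|\le d$ then gives the sharper constant $3^{(d+1)/2}\,\BH^{\le d}_{\{\pm1\}}$. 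So your approach is both more elementary (no density-matrix construction) and yields a better bound; the paper's mixed-state construction, on the other hand, is what generalizes cleanly to all $\go$ in Theorem~\ref{thm:GM}. One small quibble: your closing analogy to the dimension-free Remez inequality is a stretch---the mechanism here is just the trivial bound $3^{-|\alpha|}\ge 3^{-d}$, not the correlated-randomness idea driving Lemma~\ref{lem:remez-main-idea}.
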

An additional feature of low-degree learning specific to the quantum setting shown by \cite{CHP} is that when averaging over certain distributions of input states, low-degree truncations are good approximations of arbitrary quantum observables---even ones corresponding to exponential-time quantum computations.
This means the low-degree learning algorithm can perform well in predicting arbitrarily-complex quantum processes with respect to these distributions, a phenomenon that stands in stark contrast to the classical case.

\subsection{From $\Z^n_2$ to $\Z^n_\go$ and from qubits to qudits}

It is natural to ask whether these algorithms rely on special properties of the hypercube or qubit systems, or whether they extend to larger spaces.
In this paper we provide an affirmative answer, extending these classical and quantum learning results to \mbox{(tensor-)product} spaces of arbitrary local size $\go\geq 2$.

Classically, this extension works as follows.
We shall consider complex-valued functions $f:\Z_\go^n\to\C$, where $\Z_\go=\{0,1\dots, \go-1\}$ is the cyclic group of order $\go$.
Then each $f$ has a unique Fourier expansion:
\[f(x) = \sum_{\alpha\in\{0,1,\ldots,\go-1\}^n} \widehat{f}(\alpha)\prod_{j=1}^n\omega^{\alpha_j x_j}\quad \text{with}\quad \omega:=e^{\frac{2\pi i}{\go}}\,,\]
where $|\alpha| := \sum_j\alpha_j$. We say $f$ is \emph{of degree $d$} if $\widehat{f}(\alpha)=0$ for all $\alpha$ with $|\alpha|>d$.
Ultimately, we obtain the following algorithm.
\begin{theorem}[Cyclic Low-degree Learning]
    \label{thm:cyclic-learning}
    Let $f:\Z_\go^n\to \mathbb{D}$ be of degree $d$.
    Then with $(\log \go)^{\mathcal{O}(d^2)}\log(n/\delta)\eps^{-d-1}$ independent random samples $(x,f(x))$, $x\sim\mathcal{U}(\Z_\go^n)$, we may with confidence $1-\delta$ construct in polynomial time a function $\widetilde{f}:\Z_\go^n\to\C$ with $\|f-\widetilde{f}\|_2^2\leq \eps$.
\end{theorem}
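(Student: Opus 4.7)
The plan is to run the Eskenazis--Ivanisvili empirical-coefficient / hard-thresholding scheme from \cite{EI22}, with a dimension-free cyclic Bohnenblust--Hille (BH) inequality---which is exactly what the paper's Remez estimate makes available---playing the role of Theorem~\ref{thm:hypercube-BH}. Write $v_\alpha:=\widehat f(\alpha)$ for the true Fourier coefficients; by hypothesis $v_\alpha=0$ unless $|\alpha|\le d$, and there are at most $(n\go)^d$ such multi-indices. From $N$ i.i.d.\ uniform samples $x^{(1)},\dots,x^{(N)}$, form the empirical estimates
\[
\tilde v_\alpha\;:=\;\frac1N\sum_{k=1}^N f(x^{(k)})\,\prod_{j=1}^n \omega^{-\alpha_j x^{(k)}_j}.
\]
Since $|f|\le 1$, Hoeffding's inequality applied to the real and imaginary parts combined with a union bound over the $(n\go)^d$ relevant $\alpha$ gives $\max_{|\alpha|\le d}|\tilde v_\alpha-v_\alpha|\le\eta$ with probability $\ge 1-\delta$ as soon as $N=\Theta\bigl(d\log(n\go/\delta)\,\eta^{-2}\bigr)$.

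Next, I would convert coefficient-wise accuracy into an $L^2$ bound by hard thresholding. Set $\tilde w_\alpha:=\tilde v_\alpha\,\mathbf 1[|\tilde v_\alpha|>2\eta]$ and define $\widetilde f(x):=\sum_\alpha \tilde w_\alpha\prod_j\omega^{\alpha_j x_j}$. Plancherel gives $\|f-\widetilde f\|_2^2=\sum_\alpha|v_\alpha-\tilde w_\alpha|^2$. Under a BH-type bound $\|v\|_p\le M$ at the subcritical exponent $p=\tfrac{2d}{d+1}<2$, the standard sparse-approximation bookkeeping of \cite{EI22} yields $\sum_\alpha|v_\alpha-\tilde w_\alpha|^2\lesssim M^p\eta^{2-p}$: coefficients with $|v_\alpha|\le 3\eta$ contribute at most $(3\eta)^{2-p}\sum_\alpha|v_\alpha|^p\le(3\eta)^{2-p}M^p$ via the pointwise estimate $|v_\alpha|^2\le|v_\alpha|^p(3\eta)^{2-p}$, while the at most $(M/3\eta)^p$ coefficients with $|v_\alpha|>3\eta$ each contribute at most $9\eta^2$.

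The one genuinely new input the cyclic setting demands is therefore a dimension-free BH inequality $\|\widehat f\|_{2d/(d+1)}\le C_{d,\go}\|f\|_\infty$, and this is precisely what the paper's Remez-type estimate buys us. The substitution $z_j=\omega^{x_j}$ identifies $f$ with the restriction to $\{\omega^x:x\in\Z_\go\}^n\subset\bbt^n$ of the analytic polynomial $F(z):=\sum_\alpha v_\alpha\prod_j z_j^{\alpha_j}$ of total degree $d$ on the polytorus. The Remez inequality advertised in the abstract bounds $\|F\|_{L^\infty(\bbt^n)}$ by a $(\go,d)$-dependent constant times $\max_{x\in\Z_\go^n}|f(x)|=\|f\|_\infty$; the classical polytorus BH inequality then gives $\|v\|_{2d/(d+1)}\lesssim c^{\sqrt{d\log d}}\|F\|_{L^\infty(\bbt^n)}$. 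Tracking the Remez constant as advertised gives $M=(\log\go)^{O(d)}$, hence $M^{2d}=(\log\go)^{O(d^2)}$.

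Finally, choosing $\eta=(\eps/M^p)^{(d+1)/2}$ makes $\|f-\widetilde f\|_2^2\le\eps$, and the sample requirement becomes
\[
N\;=\;O\!\left(\frac{d\log(n\go/\delta)\,M^{2d}}{\eps^{d+1}}\right)\;=\;(\log\go)^{O(d^2)}\log(n/\delta)\,\eps^{-(d+1)},
\]
since $d$ and $\log\go$ are absorbed into the BH factor. Computing the $\tilde v_\alpha$ and thresholding can be done in time polynomial in $n$, $\go$, $N$, so the algorithm is efficient. The learning-theoretic steps are all essentially off-the-shelf given cyclic BH; the real obstacle---and the actual content of the paper---is establishing the dimension-free Remez inequality with a constant depending only on $\go$ and $d$, which is what unlocks the dimension-free BH estimate in the first place.
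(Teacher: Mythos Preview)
Your proposal is correct and follows essentially the same approach as the paper: Fourier-sample to obtain $\ell_\infty$-close empirical coefficients via Hoeffding plus a union bound, hard-threshold \`a la Eskenazis--Ivanisvili using the cyclic BH constant $M=(\log\go)^{O(d)}$ (obtained exactly as you describe, by composing the dimension-free Remez inequality with the classical polytorus BH), and read off the sample complexity $M^{2d}\eps^{-(d+1)}\log(n/\delta)$. The only cosmetic differences are that the paper packages the thresholding step as a standalone lemma (Theorem~\ref{thm:generic-EI}) with threshold $\eta(1+\sqrt{d+1})$ rather than your $2\eta$, and uses a slightly different count of low-degree multi-indices in the union bound; neither affects the argument or the final bound.
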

\noindent(Compare with the naive Fourier sampling algorithm which would require $\mathrm{poly}(n)$ samples.)
Here the $L_2$ norm $\|\cdot\|_2$ is defined with respect to the uniform probability measure on $\Omega_\go^n$.
Our efforts in this direction are in the theme of generalizing Analysis of Boolean Functions results to more-general product spaces, \emph{e.g.}, \cite{BGKKL}.

It could be argued that the quantum case of generalized low-degree learning is even more important, both for the study of fundamental physics via quantum simulation (\emph{e.g.}, \cite{quant-sim,PhysRevLett.129.160501}) and in the operation and validation of quantum computers.
In both contexts, gains in efficiency are possible when the underlying hardware system is composed of higher-dimensional subsystems, sometimes carrying an algorithm from theoretical fact to practical reality in the NISQ era \cite{PhysRevLett.129.160501}---and this benefit may very well remain as quantum computing advances.
Such systems are called \emph{multilevel system}, or \emph{qudit} quantum computers \cite{quditsurvey}.
While the qu\emph{b}it case gives a conceptual sense of the possibilities for learning on qudit systems, it is practically important to derive guarantees and algorithms that work directly in the native dimension of the quantum system.
In so doing we also establish new distributions under which arbitrary quantum processes are well-approximated by low-degree ones (including new distributions for qubit systems beyond those identified in \cite{CHP}).

\begin{theorem}[Qudit Observable Learning, Informal]
    \label{thm:qudit-learning}
    Let $\cA$ be any (not necessarily low-degree) bounded quantum observable on $\mathcal{H}_\go^{\otimes n}$; \emph{i.e.,} on $n$-many $\go$-level qudits.
    Then we may via random sampling construct in polynomial time an approximate observable $\widetilde{\cA}$ such that for a wide class of distributions $\mu$ on states $\rho$,
    \[\E_{\rho\sim\mu}\big|\tr[\cA\rho]-\tr[\widetilde{\cA}\rho]\big|^2\leq \eps\,.\]
    The samples are of the form $(\rho, \tr[\cA \rho])$ for $\rho$ drawn from the uniform distribution over a certain set of product states.
    Moreover, the number of samples $s$ required to achieve the guarantee with confidence $1-\delta$ is
    \[s\leq \mathcal{O}\Big(\log(\tfrac{n}{\delta})\,C^{\log^2(1/\epsilon)}\go^{3/2}\|\cA^{\leq t}\|^{2t}_\mathrm{op}\Big)\,.\]
    The quantity $\|\cA^{\leq t}\|_\mathrm{op}$ is the operator norm of a degree-$t$ truncation of $\cA$, for $t$ roughly $\log(1/\epsilon)$.
\end{theorem}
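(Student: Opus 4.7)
The plan is to adapt the Eskenazis--Ivanisvili learning paradigm \cite{EI22,CHP} to the qudit setting. Set $t=\Theta(\log(1/\eps))$. The approximation $\widetilde{\cA}$ will be built in two phases: (i) replace $\cA$ by its degree-$t$ truncation $\cA^{\leq t}$, arguing the truncation error is small in the target $L_2(\mu)$ norm; and (ii) learn $\cA^{\leq t}$ through empirical Fourier sampling combined with a sparsification step. The two new technical inputs required, relative to the qubit case \cite{CHP}, are a low-degree approximation theorem valid for a broad class of distributions $\mu$, and a qudit Bohnenblust--Hille inequality controlling $\ell_p$ norms of generalized Pauli coefficients for $p<2$.

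For step (i), I would identify the admissible $\mu$ as those obtained by passing a reference product state through a qudit noise channel with noise parameter bounded away from $1$. For such $\mu$, averaging $|\tr[\cA\rho]|^2$ over $\rho\sim\mu$ essentially computes the $L_2$ norm of a noise-smoothed version of $\cA$, and a quantum hypercontractivity argument (analogous to that of \cite{CHP} but for the generalized Pauli basis on $\go$-level qudits) decays the contribution of degree-$k$ components geometrically in $k$. Choosing the implicit constant in $t=C\log(1/\eps)$ large enough then gives $\E_{\rho\sim\mu}|\tr[(\cA-\cA^{\leq t})\rho]|^2 \leq \eps/2$. For step (ii), random product-state samples yield empirical estimates $w(\alpha)$ of each low-degree Fourier coefficient $\widehat{\cA}(\alpha)$, $|\alpha|\leq t$, to uniform accuracy $\eta$ using roughly $\log(n/\delta)\eta^{-2}$ samples per coordinate by standard concentration. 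The Eskenazis--Ivanisvili sparse recovery lemma then converts the $\ell_\infty$ estimate $\|w-\widehat{\cA^{\leq t}}\|_\infty\leq \eta$ into an $L_2$ approximation---provided a qudit Bohnenblust--Hille bound of the form $\|\widehat{\cA^{\leq t}}\|_{2t/(t+1)} \leq C_t\|\cA^{\leq t}\|_\mathrm{op}$ is available. Thresholding small entries of $w$ and applying Plancherel then yield the desired $\widetilde{\cA}$ with $\|\cA^{\leq t}-\widetilde{\cA}\|_{L_2(\mu)}^2\leq \eps/2$, and the theorem follows by the triangle inequality.

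The hard part is establishing that qudit Bohnenblust--Hille inequality, and this is where the dimension-free Remez estimate announced in the abstract plays its essential role. When $\cA$ is evaluated on tensor products of eigenstates of the commuting ``diagonal'' generalized Pauli operators, it reduces to a trigonometric polynomial $f$ of degree at most $d$ on $\Omega_\go^n = \{1,\omega,\ldots,\omega^{\go-1}\}^n$ with $\|f\|_{L_\infty(\Omega_\go^n)} \leq \|\cA\|_\mathrm{op}$ whose Fourier coefficients recover one block of $\widehat{\cA}$. The dimension-free Remez inequality converts $\|f\|_{L_\infty(\Omega_\go^n)}$ into the supremum of the harmonic extension of $f$ over the polytorus $\bbt^n$, where the classical analytic Bohnenblust--Hille theorem applies with its $c^{\sqrt{d\log d}}$ constant. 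Summing over all generalized Pauli blocks (reached from the diagonal one by unitary changes of basis that preserve the operator norm and contribute only a polynomial-in-$\go$ overhead) yields an overall constant $C_t$ of the form $c(\go)^{t\log t}$. Substituting $t=\Theta(\log(1/\eps))$ produces the $C^{\log^2(1/\eps)}\go^{3/2}$ factor in the sample complexity, and combining with step (i) completes the proof.
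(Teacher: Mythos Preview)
Your two-phase strategy---truncate to degree $t$, then learn the truncation via empirical Fourier sampling plus Eskenazis--Ivanisvili sparsification governed by a qudit Bohnenblust--Hille inequality---matches the paper's architecture exactly. The differences lie in how each phase is executed.

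For phase (i), the paper does not use noise channels or hypercontractivity. The admissible class consists of the \emph{locally 2-design invariant} (L2DI) distributions: those $\mu$ whose single-site stabilizer $\mathrm{Stab}_j(\mu)$ contains a unitary 2-design at every coordinate $j$. The geometric decay
\[
\E_{\rho\sim\mu}|\tr[\cA\rho]|^2\leq\sum_\alpha\Big(\tfrac{\go}{\go^2-1}\Big)^{|\alpha|}|\widehat{\cA}(\alpha)|^2
\]
is then obtained directly from the 2-design twirling identity applied coordinatewise to $\rho\otimes\rho$, with no appeal to hypercontractivity. This class is broader than product-plus-noise distributions (it contains highly entangled $\mu$), and the mechanism is different from what you sketch.

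For phase (ii), the paper's learning algorithm works in the \emph{Gell-Mann} basis, and the relevant Bohnenblust--Hille inequality is proved by a reduction to the \emph{hypercube} BH on $\{-1,1\}^{n(\go^2-1)}$ via an explicit family of density matrices $\rho(x,y,z)$; the Remez inequality plays no role there. The Remez route you describe---evaluate on eigenstates of commuting generators, land on $\Omega_\go^n$, lift to $\bbt^n$ via the dimension-free Remez estimate, then apply the classical polytorus BH---is the paper's proof of the \emph{Heisenberg--Weyl} BH, which is established separately but only for \emph{prime} $\go$ and is not the inequality invoked in the learning theorem. So your approach is workable in spirit but would inherit that primality restriction, and it misidentifies where in the paper the Remez inequality is actually deployed.
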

\noindent There is little prior work on learning qudit observables, but earlier polytime algorithms for learning qubits required at least $\mathcal{O}(n\log(n))$ samples to complete a comparable task \cite{Huang2020}.
Theorem \ref{thm:qudit-learning} is proved by combining a low-degree learning algorithm for qudits with a qudit extension of the low-degree approximation lemma of Huang, Chen and Preskill \cite[Lemma 14]{CHP}.
The distributions $\mu$ admitting this construction are studied in Section \ref{sec:learning-arb}.

\subsection{Proof ideas: a new Remez-type inequality}
The extensions described above essentially amount to proving new Bohnenblust--Hille-type inequalities in the associated spaces.
Here we briefly describe how these are obtained and introduce our Remez inequality.

There are multiple natural generalizations of the qubit BH inequality (Theorem \ref{thm:qubit-BH}), and we pursue results for operator expansions in both the Heisenberg--Weyl and the Gell-Mann bases (these choices are explained and defined in Section \ref{sec:BHineq}).
We take inspiration from the technique of \cite{VZ22} to reduce these noncommutative BH inequalities to BH inequalities over classical, commutative spaces.
In the case of the Gell-Mann basis, we are able to reduce to the Hypercube BH, so we are done.
However, in the very natural and useful Heisenberg--Weyl basis (composed of clock \& shift operators), the eigenvalues of the corresponding matrices are the $\go$\textsuperscript{th} roots of unity.
Therefore it is natural to reduce to a BH inequality over products of cyclic groups---the same inequality needed for classical cyclic low-degree learning.

So in this way an important version of the qudit BH inequality dovetails with the BH inequality for cyclic groups and hypergrid learning.
The cyclic group BH inequality appears to be previously unstudied, despite it being the interpolating case between the hypercube case ($\go=2$) and the original polytorus case ($\go=\infty$).
One quickly discovers why, however: a proof by the ``standard recipe'' for BH inequalities (\textit{\`a la} \cite{DMP}, \cite{BPS}, \cite{DGMS}, \cite{CHP}) will not work here.

Let us sketch the difficulty.
At a very coarse level, BH inequalities for degree-$d$ polynomials on some product space $X^n$ are typically proved in these steps \cite{DS}:
\begin{enumerate}
    \item Symmetrization: express $f$ as the restriction of a certain symmetric $d$-linear form $L_f$ to the diagonal $\Delta:=\{(z,\ldots, z): z\in X^n\}$; that is, $f(z)=L_f(z,\ldots, z)$.
    \item BH for multilinear forms: bound the $\ell_{2d/(d+1)}$ norm of the coefficients of $L_f$ (which are directly related to the coefficients of $f$) by the supremum norm of $L_f$ over $(X^n)^d$.
    This step is rather involved and includes several estimates, manipulations, and an application of hypercontractivity and Khinchine's inequality.
    \item Polarization: estimate the supremum norm of $L_f$ on its entire domain $(X^n)^d$ by the supremum over $\Delta$; that is,
    \[\|L_f\|_{(X^n)^d}\lesssim\|L_f\|_{\Delta}=\|f\|_{X^n}\,,\]
    where $\|\cdot\|_E$ denotes the supremum norm over some space $E$.
\end{enumerate}
When adapting this proof structure to cyclic groups of order $2<\go<\infty$, the main point of failure is in step three, polarization.
In both the polytorus and hypercube cases, one uses Markov--Bernstein-type inequalities to obtain the intermediate inequality
\[\|L_f\|_{(X^n)^d}\lesssim\|f\|_{\mathrm{conv}(X)^n},\]
where $\mathrm{conv}(X)$ denotes the convex hull of $X$.
The passage from $\mathrm{conv}(X)$ to $X$ is then immediate for the polytorus by the maximum modulus principle ($\|f\|_{\mathbb{D}^n}=\|f\|_{\bbt^n}$) and for the hypercube by multilinearity ($\|f\|_{[-1,1]^n}= \|f\|_{\{-1,1\}^n}$).
But there is no such easy fact in the setting of the multiplicative cyclic group $\Om_\go:=\{e^{2\pi i k/\go}:k=0,\ldots, \go-1\}$ with $2<\go<\infty$ because $\Om_\go$ is not the entire boundary of $\mathrm{conv}(\Om_\go)$.
Even for $n=1$ and $\go=3$ one can construct example $f$'s for which $\|f\|_{\mathrm{conv}(\Om_\go)^n}>\|f\|_{\Om_\go^n}$.

Indeed, it was not at all clear that $\|f\|_{\Om_\go^n}$ should provide any reasonable control over $\|f\|_{\mathrm{conv}(\Om_\go)^n}$, let alone a bound with constant independent of dimension.
As a resolution, we here provide a new way to relate the supremum norm of a polynomial $f$ over $\Om_\go^n$ to its supremum norm over $\bbt^n$.
\begin{theorem}
\label{thm:remez}
    Fix $\go\ge 2$. Let $f$ be an $n$-variate degree-$d$ polynomial of individual degree at most $\go-1$.
    Then 
    \[\|f\|_{\bbt^n}\leq (\mathcal{O}(\log \go))^d\|f\|_{\Om_\go^n}\,.\]
\end{theorem}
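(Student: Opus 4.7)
My plan is to reduce the multivariate problem to the one-dimensional Remez inequality on the unit circle via polarization, which is the natural mechanism that replaces the exponent $n$ by the total degree $d$ in the constant.

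The building block is the one-dimensional case: for a univariate polynomial $p$ of degree at most $\go - 1$, Lagrange interpolation through $\Om_\go$ yields $p(z) = \sum_{k=0}^{\go-1} p(\om^k) \ell_k(z)$, and the Lebesgue constant $\Lambda_\go := \sup_{z \in \bbt}\sum_k |\ell_k(z)|$ is classically $\mathcal{O}(\log \go)$. This gives $\|p\|_\bbt \le \Lambda_\go\|p\|_{\Om_\go}$. Naively iterating this estimate coordinate-by-coordinate only produces the dimension-dependent bound $\|f\|_{\bbt^n} \le \Lambda_\go^n\|f\|_{\Om_\go^n}$, so the degree-$d$ constraint must be exploited essentially.

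To introduce the degree, I would first homogenize $f$ by introducing an auxiliary variable $z_0$, making it a homogeneous polynomial of degree $d$ in $n+1$ variables. There is then a unique symmetric $d$-linear form $L$ on $(\C^{n+1})^d$ with $L(z,\dots,z) = f(z)$, and the Harris polarization inequality on the polytorus yields $\|L\|_{(\bbt^{n+1})^d} \le (d^d/d!)\|f\|_{\bbt^{n+1}} \le e^d \|f\|_{\bbt^n}$. Since $L$ is linear in each of its $d$ arguments, applying the easy $d=1$ case of the theorem (which admits a direct phase-alignment proof with constant $\mathcal{O}(1)$) slot by slot then yields
\[\|L\|_{(\bbt^{n+1})^d} \le \mathcal{O}(\log \go)^d\, \|L\|_{(\Om_\go^{n+1})^d}.\]

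The remaining, and hardest, step is a \emph{reverse polarization on roots of unity}: a bound of the form $\|L\|_{(\Om_\go^{n+1})^d} \lesssim_d \|f\|_{\Om_\go^{n+1}}$, with a constant independent of $n$. Classical polarization formulas recover $L(w^{(1)}, \ldots, w^{(d)})$ from $f$ evaluated at additive combinations $\sum_k \eta_k w^{(k)}$, which fall outside $\Om_\go^{n+1}$, so the usual identity does not apply. Establishing a \emph{multiplicative} polarization identity respecting the cyclic-group structure of $\Om_\go$---likely using that $\Om_\go$ is closed under multiplication and already contains $d+1$ distinct interpolation nodes whenever $\go \ge d+1$, with the regime $\go < d+1$ treated separately---is, I expect, the technical heart of the proof, and is the reason this dimension-free Remez inequality appears to be out of reach via ``standard'' techniques. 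Combining the Harris polarization, the slot-by-slot $d=1$ bound, and the reverse-polarization estimate then yields the claimed $\|f\|_{\bbt^n} \le (\mathcal{O}(\log \go))^d \|f\|_{\Om_\go^n}$.
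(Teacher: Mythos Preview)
Your proposal locates the difficulty correctly in the ``reverse polarization on roots of unity'' step, $\|L\|_{(\Om_\go^{n+1})^d}\lesssim_d\|f\|_{\Om_\go^{n+1}}$, but does not prove it---and this is precisely the obstacle the paper's Remez inequality is designed to circumvent. Section~1.3 explains that standard polarization produces only $\|L_f\|_{(X^n)^d}\lesssim\|f\|_{\mathrm{conv}(X)^n}$, and for $X=\Om_\go$ with $2<\go<\infty$ there is no maximum-modulus or multilinearity substitute to pass from $\mathrm{conv}(\Om_\go)$ back to $\Om_\go$. Your suggested ``multiplicative polarization identity'' is speculative; the classical polarization formula is an additive finite-difference recovery of $L$ from the diagonal, and no multiplicative variant of the required form is known. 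Worse, the reverse-polarization estimate you need is essentially \emph{equivalent} to the Remez inequality itself: given the theorem, $\|L\|_{(\Om_\go^n)^d}\le\|L\|_{(\bbt^n)^d}\le e^d\|f\|_{\bbt^n}\lesssim\|f\|_{\Om_\go^n}$, and conversely your chain gives the other direction. So the proposal reduces the theorem to an equally hard, equivalent statement. (A minor point: the Harris inequality $\|L\|_{(\bbt^{n+1})^d}\le e^d\|f\|_{\bbt^n}$ you invoke points the wrong way and plays no role in your chain.)

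The paper's actual proof abandons polarization entirely. Starting from the same single-variable interpolation with $\ell_1$ weight $O(\log\go)$ (Lemma~7), it reinterprets the weights as a complex measure on $\Om_\go$ and, in $n$ variables, \emph{correlates} the coordinates rather than integrating them independently: only $m$ i.i.d.\ random times $T_1,\dots,T_m$ are drawn, and a uniformly random map $P:[n]\to[m]$ assigns each coordinate to one of them. This yields the identity $f(\bm z)=D^m\,\E[R\,f(\bm W)]+p(1/m)$ with $p$ a polynomial of degree $<d$ and zero constant term (Lemma~8). The error is then removed not by sending $m\to\infty$ but \emph{algebraically}, via a Vandermonde-weighted linear combination over $m=1,\dots,d$ that annihilates every such $p$. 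This randomized-projection-plus-algebraic-cancellation device is the genuinely new idea, and nothing in the polarization framework you sketch substitutes for it.
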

This can be seen as a sort of maximum principle for $\Om_\go^n$ because we may conclude
\[\|f\|_{\mathrm{conv}(\Om_\go)^n}\leq \|f\|_{\mathbb{D}^n}=\|f\|_{\bbt^n}\lesssim\|f\|_{\Om_\go^n}.\]
However, with Theorem \ref{thm:remez} in hand there is no need to repeat any of the steps listed above.
The original Bohnenblust--Hille inequality for the polytorus \cite{BH} states
\[\|\widehat{f}\|_{\frac{2d}{d+1}} \leq C^{\sqrt{d\log d}}\|f\|_{\bbt^n}\,\]
for any degree-$d$ analytic polynomial $f$.
So we immediately obtain the cyclic group BH:

\begin{corollary}[Cyclic Group BH]
\label{cor:bh cyclic groups}
    Let $f$ be an $n$-variate degree-$d$ polynomial of individual degree at most $\go-1$.
    Then
    \begin{equation}
    \label{ineq:cyclic-BH}\|\widehat{f}\|_{\frac{2d}{d+1}}\leq (\mathcal{O}(\log \go))^{d+\sqrt{d\log d}}\|f\|_{\Om_\go^n}\,.
    \end{equation}
\end{corollary}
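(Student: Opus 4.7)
The plan is to show that this corollary is an immediate composition of Theorem~\ref{thm:remez} with the classical Bohnenblust--Hille inequality for analytic polynomials on the polytorus stated just before it.

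First I would identify $f$ with an analytic polynomial on $\bbt^n$. Using the substitution $z_j := \omega^{x_j}$ the quantity $\omega^{\alpha_j x_j}$ becomes $z_j^{\alpha_j}$, so the cyclic Fourier expansion
\[
  f(x) = \sum_{\alpha:\ \alpha_j \le \go-1,\ |\alpha|\le d} \widehat{f}(\alpha) \prod_{j=1}^n z_j^{\alpha_j} =: F(z)
\]
exhibits $F$ as an analytic polynomial on $\bbt^n$ of total degree at most $d$ and individual degree at most $\go-1$, whose monomial coefficients are precisely the Fourier coefficients $\widehat{f}(\alpha)$. Because $x \in \Z_\go^n$ corresponds exactly to $z\in\Om_\go^n$, we also have $\|F\|_{\Om_\go^n} = \|f\|_{\Om_\go^n}$.

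Next I would apply Theorem~\ref{thm:remez} to $F$ to obtain $\|F\|_{\bbt^n} \le (\mathcal{O}(\log\go))^d\,\|f\|_{\Om_\go^n}$, and then invoke the classical polytorus Bohnenblust--Hille inequality of \cite{BH},
\[
  \|\widehat{F}\|_{\frac{2d}{d+1}} \le C^{\sqrt{d\log d}}\,\|F\|_{\bbt^n},
\]
valid for any degree-$d$ analytic polynomial on $\bbt^n$. Chaining the two estimates yields $\|\widehat{f}\|_{\frac{2d}{d+1}} = \|\widehat{F}\|_{\frac{2d}{d+1}} \le C^{\sqrt{d\log d}}\cdot(\mathcal{O}(\log\go))^d\,\|f\|_{\Om_\go^n}$. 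To match the compact form $(\mathcal{O}(\log\go))^{d+\sqrt{d\log d}}$ appearing in \eqref{ineq:cyclic-BH}, I would note that $\go\ge 2$ gives $\log\go\ge\log 2 > 0$, so the universal constant $C$ can be absorbed into the implicit constant hiding inside $\mathcal{O}(\log\go)$ by enlarging it to $\max(C,\mathcal{O}(\log\go))$.

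I do not expect any real obstacle here: the genuine difficulty has been entirely absorbed into the dimension-freeness of Theorem~\ref{thm:remez}. The interest of the corollary is precisely that this Remez-type estimate sidesteps the usual symmetrization / polarization / hypercontractivity routine that, as the authors emphasize in step~3 above, fails for the multiplicative cyclic groups $\Om_\go$ with $2<\go<\infty$.
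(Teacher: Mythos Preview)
Your proposal is correct and matches the paper's approach exactly: the paper derives the corollary by composing the polytorus Bohnenblust--Hille inequality $\|\widehat{f}\|_{2d/(d+1)} \le C^{\sqrt{d\log d}}\|f\|_{\bbt^n}$ with Theorem~\ref{thm:remez}, just as you outline. Your added remarks about identifying $f$ with an analytic polynomial on $\bbt^n$ and absorbing the constant $C$ into the $\mathcal{O}(\log\go)$ are accurate elaborations of what the paper leaves implicit.
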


Remez-type inequalities bound the supremum of a low-degree polynomial $f$ over some space $X$ by the supremum of $f$ over some subset $Y\subseteq{X}$.
In this sense Theorem \ref{thm:remez} is a discrete Remez-type inequality for the polytorus; moreover it appears to be the first discrete multidimensional Remez inequality with a dimension-free constant (\emph{c.f.} \cite{Yomdin2011} and references therein).
% There are also Remez-like distribution function estimates that are dimension-free---see \cite{Nazarov2003}---but these are far from $L_\infty$ comparisons.

We also remark that Theorem \ref{thm:remez} may be improved in certain regimes (when $d\ll \go$, or when $\go$ is a very composite integer), as well as readily extended to $L_p\to L_p$ comparisons for general $p$ and to spaces with less structure than $\Om_\go^n$.
These extensions are to appear elsewhere.
We believe Theorem \ref{thm:remez} could be of significant general interest,
as so much is already known about polynomials over $\bbt^n$.
Theorem \ref{thm:remez} provides a bridge from discrete spaces back into classical harmonic analysis.

\subsection{Organization}
Section \ref{sec:remez} is a self-contained proof of the the dimension-free Remez Inequality.
In section \ref{sec:BHineq} we then obtain our qudit Bohnenblust--Hille inequalities.
In Section \ref{sec:learning} we use these results and a slightly generalized version of Eskenazis--Ivanisvili to give the learning algorithms of Theorem \ref{thm:qudit-learning} and Theorem \ref{thm:cyclic-learning}.
In Section \ref{sec:learning} we also study the probability distributions of qudit states that allow for accurate low-degree approximations of arbitrary quantum operators.

 \subsection*{Acknowledgements}
 O.K. is supported in part by a grant from the Israel Science Foundation (ISF Grant No. 1774/20), and by a grant from the US-Israel Binational Science Foundation and the US National Science Foundation (BSF-NSF Grant No. 2020643).
 J.S. is supported by Chris Umans' Simons Foundation Investigator Grant.
 A.V. is supported by NSF grant DMS 2154402 and by the Hausdorff Center of Mathematics, University of Bonn.
Part of this work was started while J.S., A.V., and H.Z. were in residence at the Institute for Computational and Experimental Research in Mathematics in Providence, RI, during the Harmonic Analysis and Convexity program.
It is partially supported by NSF DMS-1929284.

\section{A dimension-free Remez inequality}
\label{sec:remez}
Let $\bbt:=\{z\in\C:|z|=1\}$ and denote by $\|f\|_X$ the sup norm of any function $f:X\to \C$.
In this section we prove the key technical result of this work.
We remark that two proofs of Theorem \ref{thm:remez} are actually known; a very different argument was given by three of the authors in \cite{SVZremez}.
While the proof in \cite{SVZremez} is interesting for its own reasons, the argument below gives a better constant which is important for learning applications.

% The $\go=2$ case was known for some time.

A natural approach to proving Theorem \ref{thm:remez} is to consider a specific maximizer $z\in\bbt^n$ of $|f|$ and approximate it by a linear combination of evaluations of $f$ at points in $\Omega_\go^n$.
We might begin with this lemma for a single coordinate:

\begin{lemma}
    \label{lem:dft}
    Suppose $z\in \bbt$.
    Then there exists $\bm c:= (c_0,\ldots, c_{\go-1})$ such that for all $k=0,1,\ldots, \go-1$,
    \[z^k=\sum_{j=0}^{\go-1}c_j(\omega^j)^k.\]
    Moreover, $\|\bm c\|_1\leq B\log(\go)$ for a universal constant $B$.
\end{lemma}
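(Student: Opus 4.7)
The plan is to solve for $\bm c$ explicitly via the inverse discrete Fourier transform and then bound the resulting $\ell_1$ norm by a standard estimate of $\sum 1/|\sin|$ over $\go$ equally-spaced angles. For each $z\in\bbt$, the required linear system is Vandermonde with coefficient matrix $(\omega^{jk})_{j,k=0}^{\go-1}$, which is (up to normalization) the DFT matrix. Inverting it and summing the resulting geometric series yields
\[
c_j \;=\; \frac{1}{\go}\sum_{k=0}^{\go-1} z^{k}\omega^{-jk} \;=\; \frac{1-z^\go}{\go\,(1-z\omega^{-j})}\qquad(z\omega^{-j}\neq 1),
\]
with the obvious modification $c_j=1$, $c_{j'}=0$ $(j'\ne j)$ in the degenerate case $z=\omega^j$.

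Next, using $|1-e^{i\phi}|=2|\sin(\phi/2)|$ and the crude bound $|1-z^\go|\le 2$, I get
\[
|c_j|\;\le\;\frac{1}{\go\,|\sin(\psi_j/2)|},\qquad \psi_j:=\arg(z\omega^{-j}).
\]
The key geometric observation is that as $j$ varies over $\{0,\ldots,\go-1\}$, the angles $\psi_j/2$ form an arithmetic progression on $\R/\pi\Z$ with spacing $\pi/\go$. Let $j_0$ be the index for which $\psi_{j_0}/2$ is closest to $\{0,\pi\}\pmod\pi$; by equal spacing, this distance is at most $\pi/(2\go)$, and every other $\psi_j/2$ ($j\ne j_0$) has distance at least of order $|j-j_0|\,\pi/\go$ from $\{0,\pi\}$ in cyclic distance. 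Applying $|\sin x|\ge 2x/\pi$ on $[0,\pi/2]$ and collecting terms symmetrically around $j_0$, the contribution of the non-exceptional indices is
\[
\sum_{j\ne j_0}|c_j|\;\lesssim\;\frac{1}{\go}\sum_{k=1}^{\go}\frac{\go}{k}\;=\;O(\log\go).
\]

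The main obstacle is the remaining term $c_{j_0}$: here the denominator $|\sin(\psi_{j_0}/2)|$ can be arbitrarily small, so the blanket bound $|1-z^\go|\le 2$ is too weak. The resolution is that the numerator vanishes in concert with the denominator. Since $(z\omega^{-j_0})^\go=z^\go$, one has $|1-z^\go|=2|\sin(\go\psi_{j_0}/2)|$, and the elementary Chebyshev-type inequality $|\sin(\go x)|\le\go|\sin x|$ (immediate from $\sin(\go x)=\sin(x)\,U_{\go-1}(\cos x)$ together with $|U_{\go-1}|\le\go$) then gives $|c_{j_0}|\le 1$. Combining with the preceding paragraph yields $\|\bm c\|_1=O(\log\go)$, as claimed.
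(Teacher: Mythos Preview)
Your proof is correct and follows essentially the same route as the paper's: both invert the DFT to obtain the closed geometric-series formula $c_j=\tfrac{1}{\go}\cdot\tfrac{1-(z\omega^{-j})^{\go}}{1-z\omega^{-j}}$, bound the non-exceptional terms by $\tfrac{C}{\go}\cdot\tfrac{1}{\text{dist}(z,\omega^j)}$ and sum to a harmonic series, and handle the one index closest to $z$ separately. The only cosmetic difference is that the paper bounds the exceptional term by the trivial triangle inequality $|\widetilde c_{j_0}|=\big|\sum_{k}(z\omega^{-j_0})^k\big|\le \go$ (equivalently $|c_{j_0}|\le 1$), whereas you reach the same bound via the Dirichlet-kernel estimate $|\sin(\go x)|\le \go|\sin x|$; the latter is a pleasant observation but not needed beyond what the triangle inequality already gives.
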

\begin{proof}
    Let $\omega = \exp( 2 \pi i/ \go)$. 
    The discrete Fourier transform (DFT) of the array $A = (1, z, \ldots, z^{\go-1})$ yields $\go$ complex numbers $\wt{c_0}, \ldots, \wt{c_{\go-1}}$ so that 
\[
	z^k = A_k = \frac{1}{\go} \sum_{j=0}^{\go-1} \wt{c_j} \omega^{j k}
\]
for all $k=0,\ldots, \go-1$. Using $c_j := \frac{1}{\go}\wt{c_j}$ we get
\begin{equation}
\label{eq:DFT}
	z^k = \sum_{j=0}^{\go-1} c_j \omega^{j k}.
\end{equation}

    Recall the DFT coefficients are given by
\[
	\wt{c_j} = \sum_{k=0}^{\go-1} A_k \omega^{-k j}.
\]
Since $A_k= z^k$ we have
\[
	\wt{c_j} = \sum_{k=0}^{\go-1} z^k \omega^{-k j} = \frac{1 - (z/ \omega^j)^\go}{1 - (z / \omega^j)}.
\]
By the triangle inequality,
\[
	|\wt{c_j}| \leq \min \left( \go, \frac{2}{|\omega^j - z|} \right).
\]
Using that the harmonic number $H_\go = \sum_{k=1}^{\go} 1/k$ satisfies $H_\go \leq \log(\go)+1$, it is elementary to see that we have
\[
	\sum_{j=0}^{\go-1} |\wt{c_j}| \leq B\go \log \go
\]
for $B$ a sufficiently large constant.
That is,
\[
	\|\bm c\|_1=\sum_{j=0}^{\go-1} |c_j| = \frac{1}{\go} \sum_{j=0}^{\go-1} |\wt{c_j}| \leq B\log(\go)\,.\qedhere
\]
\end{proof}
In a single coordinate, Lemma \ref{lem:dft} provides the desired inequality as follows.
With $z\in\bbt$ a maximizer of $|f(z)|$ we have
\begin{align*}
    \|f\|_{\bbt}=|f(z)| &= |\sum_{k=0}^d a_k z^k|
    =|\sum_{k=0}^d\sum_{j=0}^{\go-1}a_kc_j(\omega^j)^k|
    = |\sum_{j=0}^{\go-1}c_jf(\omega^j)|\\
    &\leq \|\bm c\|_1 \|f\|_{\Omega_\go}\leq C\log(\go)\|f\|_{\Omega_\go}\,.\tag{H\"older}
\end{align*}
However, in higher dimensions, repeating this argument coordinatewise introduces an exponential dependence on $n$.
We circumvent this by taking a probabilistic view of the foregoing display: the sum over $j$ can be interpreted as an expectation over a (complex-valued) measure on $\Omega_\go$.
When it is repeated in several dimensions, this is like taking an expectation over $n$ independent random variables.
The key insight is that this independence is more than we need: by correlating the random variables, we save on randomness (which reduces the multiplicative constant) while retaining control of the error.

\begin{lemma}
    \label{lem:remez-main-idea}
    Let $f$ be a degree-$d$ $n$-variate polynomial and $\bm z\in \bbt^n$.
    Then there is a univariate polynomial $p=p_{f,\bm z}$ such that for any positive integer $m$ there are (dependent) random variables $R,\bm W$ taking values in $\Omega_4$ and $\Omega_\go^n$ respectively such that
    \begin{equation}
        \label{eq:main-idea}
        f(\bm z)=D^m\E_{R,\bm W}[Rf(\bm W)]+p(1/m)\,.
    \end{equation}
    Moreover, $p$ has $\deg(p)<d$ and zero constant term, and $D=D(K)$ is a universal constant depending on $K$ only.
\end{lemma}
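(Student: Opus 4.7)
The plan is to express $f(\bm z)$ as an integral against a correlated signed measure on $\Omega_\go^n$ whose total variation equals $D^m$ and whose discrepancy from $f(\bm z)$ is a polynomial in $1/m$ of degree strictly less than $d$ with vanishing constant term. First, Lemma~\ref{lem:dft} supplies, for each coordinate $i$, a signed complex measure $\nu_i$ on $\Omega_\go$ with $\int W^k\, d\nu_i(W) = z_i^k$ for $0 \le k \le \go - 1$ and total variation bounded by $D_0 := B\log\go$. Denote by $\tau$ the uniform probability measure on $\Omega_\go$, which annihilates all nonconstant characters of $\Z_\go$. The naive tensor product $\bigotimes_i \nu_i$ reproduces $f(\bm z)$ exactly but carries a catastrophic total variation of $D_0^n$; the task is to engineer a cheaper coupling at the cost of a controlled approximation.

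Next I would draw a random subset $S\subseteq[n]$ according to a sampling scheme tuned to the parameter $m$, and form the conditional signed measure $\mu_S := \bigotimes_{i\in S}\nu_i \otimes \bigotimes_{i\notin S}\tau$. This has conditional total variation $D_0^{|S|}$ and integrates a monomial by the formula $\int \bm W^{\bm\alpha}\, d\mu_S = \bm z^{\bm\alpha}\cdot\mathbf{1}[\supp(\bm\alpha)\subseteq S]$. Re-parametrising $\mu_S$ as a probability measure times a complex unit-modulus sign, and decomposing the sign onto the basis $\Omega_4 = \{1, i, -1, -i\}$ (every unit-modulus complex number is a signed combination of at most two elements of $\Omega_4$, absorbing a universal factor), yields random variables $R \in \Omega_4$ and $\bm W \in \Omega_\go^n$ with $\int f\, d\mu_S = D^{|S|}\E[R\,f(\bm W)\mid S]$. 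A rescaling that trades $D_0^{|S|}$ for a deterministic $D^m$ (either by conditioning on $|S|=m$ or by absorbing the fluctuation into a weighted resampling) then produces
\[ D^m\,\E[R\, f(\bm W)] \;=\; \sum_{\bm\alpha} \mathbb{P}[\supp(\bm\alpha)\subseteq S]\,\widehat f(\bm\alpha)\,\bm z^{\bm\alpha}, \]
so that $p_{f,\bm z}(1/m) := f(\bm z) - D^m\,\E[R\, f(\bm W)] = \sum_{\bm\alpha\neq 0}(1 - \mathbb{P}[\supp(\bm\alpha)\subseteq S])\,\widehat f(\bm\alpha)\,\bm z^{\bm\alpha}$ is what remains.

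The principal obstacle is to design the sampling distribution of $S$ to satisfy three simultaneous constraints: (i) a deterministic total-variation cost of $D^m$ for a universal $D=D(\go)$; (ii) each coverage probability $\mathbb{P}[\supp(\bm\alpha)\subseteq S]$ is a polynomial in $1/m$ of degree at most $|\supp(\bm\alpha)|\le d$ with no residual $n$-dependence; and (iii) the constant term of each such polynomial equals $1$, making $p(0) = 0$, together with a cancellation at the top order that forces $\deg p < d$ rather than $\deg p \le d$. Uniform sampling of a fixed-size $m$-subset of $[n]$ achieves (i) but produces coverage probabilities $\binom{n-s}{m-s}/\binom{n}{m}$, failing (ii); an independent Bernoulli-$(1/m)$ scheme achieves (ii) but introduces a size fluctuation that must be reconciled with (i) through the sign decomposition and an appropriate deterministic correction. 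Reconciling all three requirements in a single coupling, and verifying the combinatorial identity that delivers the polynomial-in-$1/m$ form of the discrepancy, is the crux of the argument.
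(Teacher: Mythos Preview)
Your plan correctly isolates the central tension---how to get a deterministic total-variation budget $D^m$ while keeping the coverage probabilities free of $n$---but it does not resolve it, and the resolution you gesture at cannot work. If $S\subseteq[n]$ is any random set with $|S|=m$ almost surely, then $\E|S|=m$ forces $\Pr[j\in S]$ to average to $m/n$, so single-coordinate coverage already depends on $n$; conversely, a Bernoulli-$p$ scheme makes $|S|$ random, and the factor $D_0^{|S|}/D^m$ you would need to absorb exceeds $1$ on the event $|S|>m$, so no reweighting into a probability measure is possible. Your two suggested fixes (condition on $|S|=m$; absorb into a weighted resampling) are therefore mutually exclusive with requirements (i) and (ii), exactly as you suspect. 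The proposal stops at the obstacle rather than past it.

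The paper's mechanism is structurally different from subset sampling. It does \emph{not} deactivate any coordinate with the uniform measure $\tau$. Instead it draws a uniformly random map $P:[n]\to[m]$ and $m$ independent seeds $T_1,\dots,T_m\sim\mathcal U[0,D]$; every coordinate $j$ uses its own $\nu_j$-type construction $w_j$, but evaluated at the shared seed $T_{P(j)}$. The sign is a product over \emph{buckets}, $R=\prod_{\ell=1}^m r(T_\ell)$, and the one-variable identity is engineered so that $\E[r(T)]=1/D$ while $\E[r(T)w_j(T)^k]=z_j^k/D$. Hence each of the $m$ buckets contributes exactly one factor $1/D$ regardless of how many coordinates land in it, giving the deterministic $D^{-m}$. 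The quantity replacing your $\Pr[\supp\alpha\subseteq S]$ is $\Pr[P\text{ is injective on }\supp\alpha]=m(m-1)\cdots(m-s+1)/m^{s}$, a polynomial in $1/m$ of degree $s-1<d$ with constant term $1$ and no $n$-dependence; the non-injective partitions contribute the remaining terms of $p(1/m)$. The missing idea is precisely this hashing-into-$m$-buckets construction with the sign attached to buckets rather than to active coordinates.
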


Lemma \ref{lem:remez-main-idea} is the crux of our argument and we are not aware of a similar statement in the literature.
Theorem \ref{thm:remez} follows quickly, though it is interesting to note that instead of clearing the error term by taking $m\to\infty$ (which would indeed make $p(1/m)\to 0$ but also send $D^m\to\infty$), we will end up using \emph{algebraic} features of $p$ (namely, low-degree-ness) to remove it.
But first, the lemma:

\begin{proof}[Proof of Lemma \ref{lem:remez-main-idea}]
    We will argue Lemma \ref{lem:remez-main-idea} for $f(\bm z)=\bm z^\alpha$, a monomial of degree at most $d$.
    The claim extends to general degree-$d$ $f$ by linearity.
    
    We begin by examining a single coordinate with the aim of rewriting Lemma \ref{lem:dft} in a probabilistic form.
    To that end, we first decouple the angle and magnitude information of the $c_j$'s.
    Fix $z\in\bbt$ and let $c_j$ be as in Lemma \ref{lem:dft}.
    We may write a decomposition
    \[c_j=1\cdot c_j^{(0)}+i\cdot c_j^{(1)}+(\shortminus 1)\cdot c_j^{(2)}+(\shortminus i)\cdot c_j^{(3)}=\sum_{s=0}^3i^s\cdot c_j^{(s)}\,,\]
    with all $c_j^{(s)}\in\R^{\geq0}$ and $c_j^{(0)}c_j^{(2)}=c_j^{(1)}c_j^{(3)}=0$.
    This can be done for all $j$ so that, with $C:=B\log \go$ from Lemma \ref{lem:dft},
    \begin{equation}
    \label{eq:c-bound}
    \|\bm{c}^{(s)}\|_1\leq C
    \end{equation}
    is satisfied for each $s\in\{0,1,2,3\}$, where $\bm c^{(s)}=(c_1^{(s)},\ldots, c_n^{(s)})$.
    So we have for all $k=0,\ldots, \go-1$,
    \[z^k=\sum_{j=0}^{\go-1}\sum_{s=0}^3i^s \cdot c_j^{(s)}\cdot (\omega^j)^k\,.\]
    We now rewrite the sum in Lemma \ref{lem:dft} in probabilistic form.
    
    Put $D=4C+1$ and define $r:[0,D]\to\C$ by
    \[
    r(t)=\begin{cases}
        1 & 0\leq t\leq C+1,\\
        i & C+1 < t\leq 2C+1,\\
        -1 & 2C+1 < t\leq 3C+1,\\
        -i & 3C+1 < t\leq 4C+1=D\,.
    \end{cases}
    \]
    Also define a piecewise-constant function $w:[0,D]\to\Om_\go$ as follows.
    Consider any collection of disjoint intervals $I^{(s)}_j,0\le j\le \go-1,0\le s\le 3$ such that
   \[
   I_j^{(s)}\subset[0,D],\qquad s\in\{0,1,2,3\}, j\in\{0,1,\ldots, \go-1\}
   \] and for each $s$ and $j$, $I_{j}^{(s)}\subseteq[sC+1,(s+1)C+1]$ and $|I_j^{(s)}|=c_j^{(s)}$.
    Disjointness is possible because for each $s$,
    \[|[sC+1,(s+1)C+1]|=C\geq \textstyle\sum_{j=0}^{\go-1}c_j^{(s)}\]
    by \eqref{eq:c-bound}.
    Now assign $w(I_{j}^{(s)})=\omega^j$ and in the remaining region of $[0,D]$ (that is, ${[0,D]\backslash\sqcup_{s,j}I_j^{(s)}}$) let $w$ take on each element of $\Om_\go$ with in equal amount (w.r.t. the uniform measure).

    \begin{claim}
    \label{cl:univariate-identity}
    Let $T$ be sampled uniformly from $[0,D]$.
    Then for all $k=0,1,\ldots, \go-1$,
    \begin{equation}
        \label{eq:zk-integral}
        z^k=D\E_{T}[r(T)w(T)^k]\,.
    \end{equation}
    \end{claim}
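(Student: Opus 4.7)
The plan is to verify the identity directly by computing $D\,\E_T[r(T)w(T)^k] = \int_0^D r(t)w(t)^k\,dt$ from Lebesgue measure, partitioning $[0,D]$ into the assigned intervals $\{I_j^{(s)}\}$ and the leftover region $R := [0,D]\setminus\sqcup_{s,j}I_j^{(s)}$, and handling each piece in turn.

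On each $I_j^{(s)}$, both $r$ and $w$ are constant with values $i^s$ and $\omega^j$, so using $|I_j^{(s)}|=c_j^{(s)}$ and the decomposition $c_j=\sum_s i^s c_j^{(s)}$, the total contribution from these intervals is
\[
\sum_{s,j} c_j^{(s)}\,i^s\omega^{jk} \;=\; \sum_{j=0}^{\go-1}\Bigl(\sum_{s=0}^3 i^s c_j^{(s)}\Bigr)\omega^{jk} \;=\; \sum_j c_j\,\omega^{jk} \;=\; z^k
\]
by Lemma~\ref{lem:dft}. It therefore suffices to show that the integral over $R$ vanishes for every $k\in\{0,1,\ldots,\go-1\}$.

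For $k\in\{1,\ldots,\go-1\}$ I would use that within each strip $[sC+1,(s+1)C+1]$ (with the leftmost segment $[0,1]$ absorbed into the $s=0$ strip), $r$ is constant while $w$ takes each value in $\Omega_\go$ on an equal-measure subset by construction. Hence the $w^k$-integral over $R\cap\text{strip}_s$ is a nonnegative multiple of $\sum_{j=0}^{\go-1}\omega^{jk}=0$, and the total contribution from $R$ collapses. The case $k=0$ needs a short direct check since the root-of-unity cancellation is unavailable: the four pieces of $r$ have lengths $C+1,C,C,C$, so $\int_0^D r(t)\,dt = (C+1)+iC-C-iC = 1$, while the $I_j^{(s)}$-contribution is $\sum_j c_j = z^0 = 1$, giving $\int_R r(t)\,dt = 0$ as required.

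I do not foresee a genuine obstacle. The only mildly delicate point is that the first strip has length $C+1$ rather than $C$, a choice calibrated precisely so that $\int_0^D r\,dt = 1$; this extra unit is what makes the $k=0$ case balance without disturbing $k\geq 1$, and it is also the reason $D$ is set to $4C+1$. Combining the $I_j^{(s)}$-contribution $z^k$ with the vanishing integral over $R$ yields the identity $D\,\E_T[r(T)w(T)^k] = z^k$ for every $k\in\{0,1,\ldots,\go-1\}$.
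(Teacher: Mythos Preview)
Your proof is correct and follows essentially the same approach as the paper's: both split the integral over $[0,D]$ into the assigned intervals $I_j^{(s)}$ (yielding $z^k$ via Lemma~\ref{lem:dft}) and the leftover region (vanishing by root-of-unity cancellation for $k\ge 1$ and by the direct $\int_0^D r = 1$ computation for $k=0$). The only difference is organizational---the paper treats $k=0$ first and then conditions on $r(T)=i^s$ for $k\ge 1$, whereas you carry one decomposition through all $k$---but the content is the same.
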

    \begin{claimproof}[Proof of Claim \ref{cl:univariate-identity}]
        Let us begin with $k=0$, which simplifies to
    \begin{equation}
        D\E_{T}[r(T)]=1\,.
    \end{equation}
    This can be seen by direct computation:
    \[\E_{T}[r(T)]=\frac{1}{D}\left(1+1\cdot C + i\cdot C + (\shortminus1)\cdot C + (\shortminus i)\cdot C\right) = \frac{1}{D}\,.\]
    For $k\geq 1$, consider the joint distribution of $(r(T), w(T)^k)$, whose product appears in \eqref{eq:zk-integral}.
    Fix $s \in \{0,1,2,3\}$, and condition on $r(T) = i^s$.
    The conditional distribution of $w(T)$ has two parts.
    One part, corresponding to $\sqcup_j I_j^{(s)}$, has $w(T)=\omega^j$ over $I^{(s)}_j$ with the probability $\Pr[ r(T)=i^s \wedge w(T) = \omega^j ]$ equal to $c_j^{(s)} / D$, while the other has $w(T)$ uniformly distributed in $\Om_\go$.
    The latter part contributes $0$ to the expectation $\E[ r(T) w(T)^k ]$, since $\sum_{j=0}^{\go-1} (\omega^j)^k = 0$ for $k=1,2,\ldots, \go-1$.
    The former part contributes
	\[
		i^s \cdot \sum_{j=0}^{\go-1} \frac{c_j^{(s)}}{D} \omega^{j k}.
	\]
	Summing this display over $s\in \{0,1,2,3\}$ and rearranging, we get that
	\[
		\E[ r(T) w(T)^k ] = \sum_{j=0}^{\go-1} \frac{c_j}{D} (\omega^j)^k=\frac{1}{D}z^k,
	\]
    completing proof of \eqref{eq:zk-integral}.
    \end{claimproof}

    \medskip
    
    We return to the multivariate setting.
    Fix $\bm z:=(z_1,\ldots, z_n)\in \bbt^n$ and define the functions $w_1,\ldots, w_n$ corresponding to the above construction applied to each coordinate $z_1,\ldots, z_n$.
    If each coordinate were to receive a fresh copy of $T$ this would lead to an identity with exponential constant:
    \[{\bm z}^\alpha=D^n\E_{\substack{T_\ell\,\iid \,T,\\1\leq\ell\leq n}}\big[\textstyle\prod_{\ell=1}^n r(T_\ell)w_\ell(T_\ell)^{\alpha_\ell}\big].\]
    Instead, we consider only $m$ independent copies of $T$: $T_1,\ldots, T_m\iid\mathcal{U}[0,D]$.
    The decision of which coordinates are integrated with respect to which $T_\ell$ is also made randomly, via a uniformly random function $P:[n]\to[m]$.
    We finally arrive at the definitions of $R$ and $\bm W$:
    \begin{align*}
        R &:= \prod_{\ell=1}^m R_\ell \quad \text{with} \quad R_\ell:=r(T_\ell), 1\leq\ell\leq m\\
        \bm W &:=\Big(w_1\big(T_{P(1)}\big),w_2\big(T_{P(2)}\big), \ldots, w_n\big(T_{P(n)}\big)\Big)=:\left(W_1,\ldots,W_n\right)\,.
    \end{align*}
    
    When $P$ is injective on $\supp(\alpha)$, we easily achieve the smaller constant.
    \begin{claim}
    \label{cl:fullpart}
    Consider $m\geq |\supp(\alpha)|$.
    Then
    \[\E_{R,\bm W}[R\cdot \bm W^\alpha\mid P \text{ is injective on }\supp(\alpha)]=D^{-m}\bm z^\alpha\,.\]
    \end{claim}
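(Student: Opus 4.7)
The plan is to directly factor the expectation using the structure of $R$ and $\bm W$ under the injectivity conditioning. Write
\[
R\cdot \bm W^\alpha \;=\; \prod_{\ell=1}^m r(T_\ell)\cdot \prod_{j=1}^n w_j\!\left(T_{P(j)}\right)^{\alpha_j}
\;=\; \prod_{\ell=1}^m\Bigg[r(T_\ell)\,\prod_{j\in P^{-1}(\ell)} w_j(T_\ell)^{\alpha_j}\Bigg],
\]
so that the expression groups naturally as a product of one factor per $T_\ell$. This grouping is the only preparation needed; the rest is reading off the right univariate expectation from Claim \ref{cl:univariate-identity}.

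Condition on a specific $P\colon[n]\to[m]$ that is injective on $\supp(\alpha)$; note $m\ge|\supp(\alpha)|$ is exactly what makes such $P$'s possible. Injectivity on $\supp(\alpha)$ means that for each $\ell\in[m]$, the fiber $P^{-1}(\ell)$ contains at most one index $j_\ell\in\supp(\alpha)$, while every other $j\in P^{-1}(\ell)$ satisfies $\alpha_j=0$ and contributes the trivial factor $w_j(T_\ell)^0=1$. So the inner product over $P^{-1}(\ell)$ collapses to $w_{j_\ell}(T_\ell)^{\alpha_{j_\ell}}$ (or to $1$ if no active index hits $\ell$).

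Since $T_1,\dots,T_m$ are independent, the conditional expectation factorises as
\[
\E\!\left[R\cdot \bm W^\alpha\,\big|\,P\right] \;=\; \prod_{\ell=1}^m \E_{T_\ell}\!\left[r(T_\ell)\,w_{j_\ell}(T_\ell)^{\alpha_{j_\ell}}\right],
\]
with the convention that the second factor is absent when $\ell$ is unused. For each of the $m-|\supp(\alpha)|$ unused $\ell$'s, the $k=0$ case of Claim \ref{cl:univariate-identity} gives $\E[r(T_\ell)]=1/D$. For each of the $|\supp(\alpha)|$ used $\ell$'s, Claim \ref{cl:univariate-identity} applied in coordinate $j_\ell$ with $k=\alpha_{j_\ell}\le \go-1$ (valid by the individual degree bound of Theorem \ref{thm:remez}) gives $\E[r(T_\ell)w_{j_\ell}(T_\ell)^{\alpha_{j_\ell}}]=z_{j_\ell}^{\alpha_{j_\ell}}/D$. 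Multiplying,
\[
\E\!\left[R\cdot \bm W^\alpha\,\big|\,P\right] \;=\; D^{-m}\prod_{j\in\supp(\alpha)} z_j^{\alpha_j} \;=\; D^{-m}\bm z^\alpha,
\]
and crucially this value does not depend on which injective $P$ was chosen, so averaging over all such $P$ yields the same quantity. This completes the claim, and I do not anticipate a real obstacle: the only subtle point is verifying that injectivity on $\supp(\alpha)$ (rather than full injectivity of $P$) is enough to reduce every coordinate to a single $T_\ell$, which works precisely because non-active coordinates contribute the trivial exponent.
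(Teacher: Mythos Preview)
Your proof is correct and follows essentially the same approach as the paper: condition on a specific $P$ injective on $\supp(\alpha)$, factor the expectation over $\ell\in[m]$ by independence of the $T_\ell$'s, apply Claim~\ref{cl:univariate-identity} to each factor (yielding $1/D$ when no active index is present and $z_{j_\ell}^{\alpha_{j_\ell}}/D$ otherwise), and observe the result is independent of the particular injective $P$. Your explicit remark that $\alpha_{j_\ell}\le K-1$ (needed to invoke Claim~\ref{cl:univariate-identity}) is a small clarification the paper leaves implicit.
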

    \begin{claimproof}[Proof of Claim \ref{cl:fullpart}]
        It suffices to prove this for an arbitrary projection $\widetilde{P}$ that is injective on $\supp(\alpha)$.
    Consider the partition of $[n]$ given by ${\widetilde P}^{-1}([m])$ and write $S_\ell={\widetilde P}^{-1}(\ell)$ for $\ell\in[m]$.
    By independence, the expectation splits over these $S_\ell$'s:
    \begin{equation}
        \label{eq:exp-split}
        \E_{R,\bm W}[R\cdot \bm W^\alpha\mid P =\widetilde{P}]=\textstyle\prod_{\ell=1}^m\E\left[R_\ell\prod_{k\in S_\ell}W_k^{\alpha_k}\right].
    \end{equation}
    Because $\widetilde{P}$ is injective on $\supp(\alpha)$, every $S_\ell$ contains one or zero elements of $\supp(\alpha)$.
    By Claim \ref{cl:univariate-identity}, in the latter case we have
    \[\E\left[R_\ell\textstyle\prod_{k\in S_\ell}W_k^{\alpha_k}\right] = \E[R_\ell]=\frac1D,\]
    and in the former case we have
    \[\E\left[R_\ell\textstyle\prod_{k\in S_\ell}W_k^{\alpha_k}\right] = \E[R_\ell W_j^{\alpha_j}] = \frac{1}{D}z_j^{\alpha_j},\]
    for the specific $j$ for which $\{j\}=S_\ell\cap\supp(\alpha)$.
    Substituting these observations into \eqref{eq:exp-split} completes the argument.
    \end{claimproof}

    When $P$ is not injective, we still have some control.
    Let $\mathcal{S}=\{S_j\}$ be a partition of $\supp(\alpha)$.
    We say $P$ \emph{induces} $\mathcal{S}$ if 
    \[\{P^{-1}(j)\cap \supp(\alpha):j\in [m]\}=\mathcal{S}\,.\]
   % that is
   %  \[\forall j,\quad  |P(S_j)| = 1 \quad \text{and}  \quad \forall j\neq k,\quad P(S_j) \neq P(S_k)\,.\]
    
    \begin{claim}
    \label{cl:E(S)}
    For each partition $\mathcal{S}$ of $\supp(\alpha)$ there is a number $E(\mathcal{S})$ such that for all $m \geq |\mathcal{S}|$,
    \[\E_{R,\bm W}[R\cdot \bm W^\alpha\mid P \text{ induces } \mathcal{S}]=D^{-m}E(\mathcal{S}).\]
    \end{claim}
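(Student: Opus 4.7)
The plan is to extend the factorisation argument of Claim~\ref{cl:fullpart} so that it accommodates parts $S_j$ of size greater than one, and then to verify that the resulting number depends on the inducing $P$ only through the underlying partition $\mathcal{S}$. First I would pick any specific $P$ inducing $\mathcal{S}=\{S_1,\dots,S_k\}$ with $k=|\mathcal{S}|\le m$, and use the independence of $T_1,\dots,T_m$ (and the fact that coordinates $i\notin\supp(\alpha)$ contribute $W_i^{\alpha_i}=W_i^0=1$) to write
\[
\E_{R,\bm W}\bigl[R\cdot \bm W^\alpha \mid P\bigr]
=\prod_{\ell=1}^m\E\Bigl[r(T_\ell)\prod_{i\in P^{-1}(\ell)\cap\supp(\alpha)}w_i(T_\ell)^{\alpha_i}\Bigr].
\]

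Among these $m$ factors, exactly $k$ of the sets $P^{-1}(\ell)\cap\supp(\alpha)$ are non-empty and coincide (in some order) with the parts $S_1,\ldots,S_k$ of $\mathcal{S}$. The remaining $m-k$ factors are empty products and so reduce to $\E[r(T_\ell)]=1/D$, as computed in the $k=0$ case of Claim~\ref{cl:univariate-identity}. For each non-empty part $S_j$, the corresponding factor equals the number
\[
e(S_j):=\E\Bigl[r(T)\prod_{i\in S_j}w_i(T)^{\alpha_i}\Bigr],
\]
which depends only on $S_j$, on $\alpha$, and on $\bm z$ (which are all fixed), and not on $\ell$, $m$, or the specific $P$. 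Combining these gives
\[
\E_{R,\bm W}\bigl[R\cdot \bm W^\alpha \mid P\bigr]=D^{-(m-k)}\prod_{j=1}^k e(S_j).
\]

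I would then observe that the right-hand side depends on $P$ only through the unordered collection $\{S_1,\dots,S_k\}=\mathcal{S}$: any other $P'$ inducing the same partition (whether by reshuffling the indices in $[m]$ to which the parts are sent, or by redistributing indices outside $\supp(\alpha)$ among the $m$ buckets) yields the same product. Hence the conditional expectation given the event $\{P \text{ induces } \mathcal{S}\}$ equals this common value, and setting
\[
E(\mathcal{S}):=D^{k}\prod_{j=1}^{k}e(S_j)
\]
gives $\E_{R,\bm W}[R\cdot \bm W^\alpha\mid P \text{ induces }\mathcal{S}]=D^{-m}E(\mathcal{S})$, as required.

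The argument is essentially bookkeeping powered by independence, so I do not expect a substantive obstacle. The one subtlety I would double-check is the $P$-invariance step: one must confirm that both the symmetry among non-supp indices (which freely relabel inactive buckets) and the symmetry among active buckets (which permute the $e(S_j)$'s inside a commutative product) really do leave the value unchanged. Once that is verified, the identity follows immediately and $E(\mathcal{S})$ is a well-defined function of the partition alone.
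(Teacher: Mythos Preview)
Your proposal is correct and follows essentially the same approach as the paper: condition on a specific $P$, use independence of the $T_\ell$'s to factor, pull out $\E[r(T_\ell)]=1/D$ for the $m-|\mathcal{S}|$ inactive buckets, and observe the remaining factor depends only on $\mathcal{S}$. You in fact go slightly further by explicitly factoring the active part as $\prod_j e(S_j)$, whereas the paper simply asserts the residual expectation is independent of the particular $\widetilde P$ and of $m$ and defines $E(\mathcal{S})$ from it.
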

    \begin{claimproof}[Proof of Claim \ref{cl:E(S)}]
        Condition again on a specific $\wt P$ that induces $\mathcal{S}$.
        There are two types of $\ell\in[m]$: those that $\bm W^\alpha$ depends on (that is, $\widetilde P(\supp(\alpha))$), and those that only $R$ depends on.
        Call these sets $L=\widetilde P(\supp(\alpha))$ and $L^c$ respectively.
        Then by independence of the $T_\ell$'s,
    \begin{align*}
    \E_{R,\bm W}[R\cdot \bm W^\alpha\mid P=\wt P]&=\E_{R,\bm W}\textstyle[\left(\prod_{\ell\in L^c}R_\ell\right)\left(\prod_{\ell\in L}R_\ell\right)\cdot \bm W^\alpha\mid P = \wt P]\\
    &= D^{-m+|\mathcal{S}|}\underbrace{\E_{R,\bm W}\textstyle[\left(\prod_{\ell\in L}R_\ell\right)\cdot \bm W^\alpha\mid P = \wt P]}_{*}\,.
    \end{align*}
    We observe that the expectation ($*$) does not depend on the specific $\wt P$ inducing $\mathcal{S}$, nor on $m$.
    Thus we may define $E(\mathcal{S})$ by setting $D^{-|\mathcal{S}|}E(\mathcal{S})$ equal to ($*$).
    \end{claimproof}

    To summarize claims \ref{cl:fullpart} and \ref{cl:E(S)},
    we have that for all partitions $\mathcal{S}$ of $\supp(\alpha)$ there is a number $E(\mathcal{S})$ such that for all $m \geq |\mathcal{S}|$,
    \[
    \E[R\cdot\bm W^\alpha|P\text{ induces }\mathcal{S}]= D^{-m}E(\mathcal{S}).
    \]
    And using $\mathcal{S}^*$ to denote the singleton partition $\big\{\{j\}\big\}_{j\in\supp(\alpha)}$, we additionally have $E(\mathcal{S}^*)=\bm z^\alpha$.
    
    \medskip

    Now we consider the unconditional expectation $\E[R\cdot \bm W^\alpha]$ with $P\sim\mathcal{U}({[m]}^{[n]})$.
    Simple combinatorics give that for all partitions $\mathcal{S}$ and all $m\geq 1$, with $s=|\mathcal{S}|$,
    \[\Pr[P \text{ induces } \mathcal{S}]=\frac{m(m-1)\cdots(m-s+1)}{m^{|\supp(\alpha)|}}=:
    \begin{cases}
        1+q_{s}\big(\tfrac{1}{m}\big) & \text{if } s=|\supp(\alpha)|\\
        q_{s}\big(\tfrac{1}{m}\big) & \text{if } s<|\supp(\alpha)|\,,
    \end{cases}\]
    for polynomials $q_{s}$ with zero constant term and $\deg(q_{s})<d$.

    Of course $P$ can only induce $\mathcal{S}$ for $|\mathcal{S}|\leq m$, so by the law of total probability,
    \[\E_{R,\bm W}[R\cdot \bm W^\alpha]=\sum_{\mathcal{S},|\mathcal{S}|\leq \min(m,|\supp(\alpha)|)}\E[R\cdot \bm W^\alpha\mid P \text{ induces } \mathcal{S}\,]\Pr[P \text{ induces } \mathcal{S}]\,.\]
    Consider first the case $m \geq |\supp(\alpha)|$.
    We obtain
    \begin{align}
    \label{eq:fullcase-mbig}
        \E_{R,\bm W}[R\cdot \bm W^\alpha] &= \sum_{\mathcal{S}}\E[R\cdot \bm W^\alpha\mid P \text{ induces } \mathcal{S}\,]\Pr[P \text{ induces } \mathcal{S}]\nonumber\\
        &= D^{-m} E(\mathcal{S}^*) \Big(1+q_{|\supp(\alpha)|}\big(\tfrac1m\big)\Big)+\sum_{\mathcal{S}, |\mathcal{S}|<|\supp(\alpha)|}D^{-m}E(\mathcal{S})\cdot q_{|\mathcal{S}|}\big(\tfrac1m\big)\nonumber\\
        &= D^{-m}\Big[\bm z^\alpha + \sum_{\mathcal{S}}E(\mathcal{S})\cdot q_{|\mathcal{S}|}\big(\tfrac1m\big)\Big]\,.
    \end{align}
    Now when $m < |\supp(\alpha)|$, we combine the fact that $\Pr[P \text{ induces } \mathcal{S}]=0$ for $|\mathcal{S}|>m$ with the definition of $q_s$ to see
    \begin{align}
    \label{eq:fullcase-msmall}
        \E_{R,\bm W}[R\cdot\bm W^\alpha] &= 0+\sum_{\mathcal{S}, |\mathcal{S}|\leq m}\E[R\cdot \bm W^\alpha\mid P \text{ induces } \mathcal{S}\,]\Pr[P \text{ induces } \mathcal{S}]\nonumber\\
        &= \sum_{\mathcal{S},|\mathcal{S}|>m}D^{-m}E(\mathcal{S})\Pr[P \text{ induces } \mathcal{S}] + \sum_{\mathcal{S}, |\mathcal{S}|\leq m}D^{-m}E(\mathcal{S})\Pr[P \text{ induces } \mathcal{S}]\nonumber\\
        &= D^{-m}E(\mathcal{S}^*)\Big(1+q_{|\supp(\alpha)|}\big(\tfrac1m\big)\Big)\nonumber\\
        &\hspace{5em}+\sum_{|\supp(\alpha)|>|\mathcal{S}|>m}D^{-m}E(\mathcal{S})\cdot q_{|\mathcal{S}|}\big(\tfrac1m\big)+\sum_{m\geq |\mathcal{S}|}D^{-m}E(\mathcal{S})\cdot q_{|\mathcal{S}|}\big(\tfrac1m\big)\nonumber\\
        &= D^{-m}\Big[\bm z^\alpha + \sum_{\mathcal{S}}E(\mathcal{S})\cdot q_{|\mathcal S|}\big(\tfrac1m\big)\Big]\,.
    \end{align}
    Noting that \eqref{eq:fullcase-msmall} and \eqref{eq:fullcase-mbig} are identical, we rearrange to find
    \[\bm z^\alpha = D^m\E[R\cdot \bm W^\alpha] - \sum_{\mathcal{S}}E(\mathcal{S})\cdot q_{|\mathcal S|}\big(\tfrac1m\big), \]
    and the second part is in total a polynomial in $\tfrac1m$ with no constant term and degree $<d$.
\end{proof}

Finally, the error term $p\big(\tfrac1m\big)$ is removed by considering several values of $m$.

\begin{proof}[Proof of Theorem \ref{thm:remez}]
    Suppose there were some coefficients $a_m \in \C$ with $\sum_{m=1}^{d} a_m = 1$, so that for any polynomial $p$ of degree $< d$ and $p(0) = 0$ we would have
    \[
    	\textstyle\sum_{m=1}^{d} a_m p(\tfrac1m)=0.
    \] 
    We could then sum~\eqref{eq:main-idea} for $m=1, \ldots, d$, weighted by $a_m$, and get
    \[
    	f(z) = \sum_{m=1}^{d} a_m f(z) = \sum_{m=1}^{d} a_m D^{m} \E[ R_m f(W_m)] + \sum_{m=1}^{d} a_m p\big(\tfrac1m\big) = \sum_{m=1}^{d} a_m D^{m} \E[ R_m f(W_m)],
    \]
    where $R_m, W_m$ are those $R,\bm W$ from~\eqref{eq:main-idea} marked with explicit dependence on $m$.
    
    Well, these coefficients $a_m$ can be arranged, since the monomial vectors $(1/m^t)_{m=1,\ldots, d}$ for $t=0,\ldots, d-1$ are linearly independent (Vandermonde).
    Since always $|R_m| \leq 1$, we deduce
    \[
    	|f(z)| \leq \sum_{m=1}^{d} |a_m D^{m}| \cdot \|f\|_{\Omega_\go^n} \leq \frac{\max_{m=1}^{d} |a_m|}{1-1/D} \cdot D^{d} \|f\|_{\Om_\go^n}. 
    \]
    An explicit formula for the $a_m$'s is given by
    \[
    	a_{m} = (-1)^{d-m} \frac{m^d}{m! (d-m)!},
    \]
    and it is evident that $\max_{m=1}^{d} |a_m| \leq \exp(O(d))$, and specifically $\max_{m=1}^{d} |a_m| \leq \exp(1.28d)$.
    
    Without loss of generality, we may assume $D \geq 11$ thus $1/(1-1/D) \leq 1.1$, so we conclude
    \[
    	|f(z)| \leq
    	(4D) ^ {d} \|f\|_{\Om_\go^n} =
    	(4B \log(\go) + 4)^d \|f\|_{\Om_\go^n}.\qedhere
    \]
\end{proof}

\section{Qudit Bohnenblust--Hille inequalities}
\label{sec:BHineq}
Let 
\[
f(z) = \sum_\al c_\al z^\al=\sum_\al c_\al z^{\al_1}_1 \cdots z_n^{\al_n},
\]
where $\al=(\al_1, \dots, \al_n)$ are vectors of non-negative integers, all $c_\alpha$ are nonzero, and the total degree of polynomial $f$ is $d=\max_\al (\al_1+\dots+\al_n)$.
Here  $z$ can be all complex vectors in $\mathbb{T}^n=\{\zeta\in \C:|\zeta|=1\}^n$ or all sequences of $\pm1$ in Boolean cube $\{-1,1\}^n$. Bohnenblust--Hille-type of inequalities are the following
\begin{equation}
	\label{BHcom}
	\Big(\sum_\al |c_\al|^{\frac{2d}{d+1}}\Big)^{\frac{d+1}{2d}} \le C(d) \sup_{z}|f(z)|\,.
\end{equation}
The supremum is taken either over the torus $\mathbb{T}^n$ or, more recently, the Boolean cube $\{-1,1\}^n$.
In both cases this inequality is proven with constant $C(d)$ that is independent of the dimension $n$ and sub-exponential in the degree $d$.
More precisely, denote by $\textnormal{BH}^{\le d}_{\mathbb{T}}$ and $\textnormal{BH}^{\le d}_{\{\pm 1\}}$ the best constants in the Bohnenblust--Hille inequalities \eqref{BHcom} for degree-$d$ polynomials on $\mathbb{T}^n$ and $\{-1,1\}^n$, respectively. Then both $\textnormal{BH}^{\le d}_{\mathbb{T}}$ and $ \textnormal{BH}^{\le d}_{\{\pm 1\}}$ are bounded from above by $e^{c\sqrt{d\log d}}$ for some universal $c>0$ \cite{BPS,DMP}. 

One of the key features of this inequality \eqref{BHcom} is the dimension-freeness of $C(d)$. This, together with its sub-exponential growth phenomenon in $d$, plays an important role in resolving some open problems in functional analysis and harmonic analysis \cite{DGMS,BPS,DFOOS}. The optimal dependence of $\textnormal{BH}^{\le d}_{\mathbb{T}}$ and $ \textnormal{BH}^{\le d}_{\{\pm 1\}}$ on the degree $d$ remains open.
%Important questions in quantum computing would be resolved if one would improve the constant $C(d)$ to $d^C$, see \cite{AA}.{\color{red} Is it true that the AA conjecture is solved by this?}

The qubit BH inequality, Theorem \ref{thm:qubit-BH}, has received two very different proofs.
In \cite{CHP} Huang, Chen and Preskill pursue a direct proof and notably develop a physically-motivated ``algorithmic'' procedure to prove the key step in BH-type arguments known as \emph{polarization}.
They achieve the dimension-free constant $C_d\leq \mathcal{O}(d^d)$.
Another proof approach appears in \cite{VZ22}, which works by reducing the qubit BH inequality to the hypercube BH inequality.
Let $\BH^{\leq d}_{M_2}$ denote the optimal constant in Theorem \ref{thm:qubit-BH} (where $M_2$ designates the 2-by-2 complex matrix algebra).
Then \cite{VZ22} showed $\BH^{\leq d}_{M_2}\leq 3^d\BH^{\leq d}_{\{\pm 1\}}\leq C^{\mathcal{O}(d)}$.

Pauli matrices are very special objects, being Hermitian, unitary, and anticommuting, and it was unclear whether the reduction approach in \cite{VZ22} could be extended to the qudit setting, where higher-dimensional generalizations of Pauli matrices are not so well-behaved.
In fact we succeed in extending the reduction argument to two bases for the complex matrix algebra $M_\go(\C)$ (tensors of which form the appropriate space for qudit systems) known as the (generalized) \emph{Gell-Mann basis} and the \emph{Heisenberg--Weyl basis} with the view to reduce to scalar BH inequalities.
They are orthonormal with respect to the normalized trace inner product $\frac{1}{\go}\tr[A^\dagger B]$, and are respectively Hermitian and unitary generalizations of the 2-dimensional Pauli basis.
Our proofs of these extensions reveal some pleasing features of the geometry of the eigenvalues of GM and HW matrices.

\begin{definition}[Gell-Mann Basis]
    Let $\go\ge 2$ and put $E_{jk}=\ket{e_j}\!\!\bra{e_k}, 1\le j,k\le \go$.
    The generalized Gell-Mann Matrices are a basis of $M_\go(\C)$ and are comprised of the identity matrix $\un$ along with the following generalizations of the Pauli matrices:
\begin{align*}
	\text{symmetric: }&& \bA_{jk} &= {\textstyle\sqrt{\!\frac{\go}{2}}}\big(E_{jk}+E_{kj}\big) & &\text{ for } 1\leq j<k\leq \go\\[0.3em]
	\text{antisymmetric: }&& \bB_{jk} &= {\textstyle\sqrt{\!\frac{\go}{2}}}\big(-iE_{jk}+iE_{kj}\big) & &\text{ for } 1\leq j < k\leq \go\\
	\text{diagonal: }&& \bC_m\, &= \renorm_{m}\left(\textstyle\sum_{k=1}^mE_{kk}-mE_{m+1,m+1}\right)  & &\text{ for } 1\leq m \leq \go-1,
\end{align*}
where $\renorm_{m} :=\sqrt{\!\frac{\go}{m^2+m}}$.
We denote
\[
\textnormal{GM(\go)}:= \{\un, \bA_{jk}, \bB_{jk}, \bC_m\}_{1\leq j<k\leq \go, 1\leq m \leq \go-1}\,.
\]
\end{definition}

An observable $\cA$ has expansion in the GM basis as
% With either basis, an observable $\cA$ on $\mathcal{H}_\go^{\otimes n}$ has the expansion
\[\cA = \sum_{\alpha\in \Lambda_\go^n}\widehat{\cA}(\alpha)M_\alpha=\sum_{\alpha\in \Lambda_\go^n}\widehat{\cA}(\alpha){\textstyle\bigotimes_{j=1}^n} M_{\alpha_j}\]
for some index set $\Lambda_\go$ (so $\{M_\alpha\}_{\alpha\in\Lambda_\go}=\mathrm{GM}(\go)$).
Letting $|\alpha|=|\{j:M_{\alpha_j}\neq \mathbf{I}\}|$, we say $\cA$ is of degree $d$ if $\widehat{\cA}(\alpha)=0$ for all $\alpha$ with $|\alpha|>d$.

We find the Gell-Mann BH inequality enjoys a reduction to the hypercube BH inequality on $\{-1,1\}^{n(\go^2-1)}$ and obtain the following.

\begin{theorem}[Qudit Bohnenblust--Hille, Gell-Mann Basis]\label{thm:GM}
	Fix any $\go\ge 2$ and $d\ge 1$. There exists $C(d,\go)>0$ such that for all $n\ge 1$ and GM observable $\cA\in M_\go(\C)^{\otimes n}$ of degree $d$, we have
	\begin{equation}\label{ineq:bh GM}
		\|\widehat{\cA}\|_{\frac{2d}{d+1}}\le C(d,\go)\|\cA\|_{\textnormal{op}}.
	\end{equation}
Moreover, we have $C(d,\go)\le \big(\tfrac32(\go^2-\go)\big)^d \textnormal{BH}^{\le d}_{\{\pm 1\}}$.
\end{theorem}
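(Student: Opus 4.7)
The plan is to adapt the reduction strategy of \cite{VZ22} (which handled the qubit case $\go=2$) to the Gell--Mann basis on qudits: by probing the observable $\cA$ with a carefully designed family of product states parameterized by sign bits, the noncommutative Gell--Mann BH inequality reduces to the already-established hypercube BH inequality (Theorem~\ref{thm:hypercube-BH}), avoiding the usual symmetrization--multilinearization--polarization pipeline altogether.

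Concretely, for each qudit site $i\in[n]$ I would introduce $N:=\go^2-1$ hypercube bits $x_{i,a}\in\{\pm 1\}$, one per non-identity GM matrix $M_a$, and form the local density matrix
\[
\rho_i(x_i)\;:=\;\frac{1}{\go}\Big(\mathbf{I}\;+\;\lambda\!\!\sum_{a\ne\mathbf{I}}x_{i,a}M_a\Big),
\]
with $\lambda>0$ chosen as large as possible subject to $\rho_i(x_i)\succeq 0$ for every $x_i\in\{\pm 1\}^N$. Using the GM trace-orthonormality $\frac{1}{\go}\tr[M_aM_b]=\delta_{a,b}$ together with multiplicativity of the trace on tensor products, the product state $\rho(x):=\bigotimes_i\rho_i(x_i)$ gives the shadow polynomial
\[
f_\cA(x)\;:=\;\tr[\cA\,\rho(x)]\;=\;\sum_\alpha\widehat{\cA}(\alpha)\,\lambda^{|\alpha|}\,\chi_\alpha(x),
\]
where $\chi_\alpha(x):=\prod_{i:\alpha_i\ne\mathbf{I}}x_{i,\alpha_i}$ are pairwise-distinct Walsh characters on $\{\pm 1\}^{nN}$. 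Since $\rho(x)$ is a density matrix we have $\|f_\cA\|_\infty\le\|\cA\|_{\textnormal{op}}$, and $f_\cA$ inherits degree $d$ from $\cA$. Applying Theorem~\ref{thm:hypercube-BH} to $f_\cA$ and using $\lambda^{|\alpha|}\ge\lambda^d$ (for $\lambda\le 1$) yields
\[
\lambda^d\,\|\widehat{\cA}\|_{2d/(d+1)}\;\le\;\textnormal{BH}^{\le d}_{\{\pm 1\}}\|\cA\|_{\textnormal{op}},
\]
so $C(d,\go)\le\lambda^{-d}\,\textnormal{BH}^{\le d}_{\{\pm 1\}}$.

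The main obstacle is securing the lower bound $\lambda\ge 2/\big(3\go(\go-1)\big)$, equivalently the uniform operator-norm estimate
\[
\sup_{x\in\{\pm1\}^N}\Big\|\sum_{a\ne\mathbf{I}}x_a M_a\Big\|_{\textnormal{op}}\;\le\;\tfrac{3}{2}\go(\go-1).
\]
I would split this sum into its off-diagonal part (the $\bA_{jk},\bB_{jk}$ terms) and its diagonal part (the $\bC_m$ terms) and bound each separately. The off-diagonal piece is Hermitian with zero diagonal and off-diagonal entries of modulus $\sqrt{\go}$ (since $\bA_{jk}$ and $\bB_{jk}$ contribute $\sqrt{\go/2}(x_{A,jk}\pm i x_{B,jk})$ at positions $(j,k),(k,j)$), so Gershgorin bounds its operator norm by $\sqrt{\go}(\go-1)$. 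The diagonal piece is a diagonal matrix whose $k$-th entry is $\sum_m x_{C,m}(\bC_m)_{kk}$, which is controlled entrywise using $\renorm_m=\sqrt{\go/(m^2+m)}$ and the explicit formulas for $(\bC_m)_{kk}$. Combining these two estimates and absorbing the slack yields the claimed constant $\tfrac{3}{2}\go(\go-1)$; once this spectral inequality is in hand the reduction to hypercube BH is immediate.
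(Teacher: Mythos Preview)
Your reduction is correct and shares the paper's high-level strategy—probe $\cA$ with product density matrices indexed by $\{\pm 1\}^{n(\go^2-1)}$, read off a degree-$d$ multilinear Boolean shadow $f_\cA$, and invoke the hypercube BH inequality—but the density matrix you build is genuinely different. The paper does \emph{not} use the Bloch-ball ansatz $\rho=\frac{1}{\go}(\mathbf{I}+\lambda\sum_a x_a M_a)$. Instead it assembles $\rho(x,y,z)$ from rank-one projections onto eigenvectors of the $\bA_{jk},\bB_{jk}$ together with an explicit diagonal correction (their Lemma~\ref{action}); positivity is then automatic, but the price is a delicate cancellation computation showing every $\bC_m$ is orthogonal to the sum of those projections (their Figure~\ref{fig:C-cancel-matrix}). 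Your route trades that cancellation lemma for a single spectral bound, which is arguably more elementary. Interestingly, the paper's construction actually yields $\lambda=\sqrt{\go/2}\big/\!\big(3\binom{\go}{2}\big)$, a factor $\sqrt{\go/2}$ larger than your $2/(3\go(\go-1))$, so their density matrices are sharper—but they relax to exactly your constant when stating the theorem, so both approaches land on the same $C(d,\go)$.

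The only soft spot is ``absorbing the slack'' for the spectral bound. Gershgorin on the off-diagonal block gives $(\go-1)\sqrt{\go}$, leaving headroom $(\tfrac{3}{2}\sqrt{\go}-1)(\go-1)\sqrt{\go}$ that must swallow the diagonal. This does go through, but it needs a short argument: one checks that $\max_k\sum_m|(\bC_m)_{kk}|$ is attained at $k=1$ (for $k\ge 2$ the extra term $(k-1)\renorm_{k-1}=\sqrt{\go(k-1)/k}$ is dominated by $\sum_{m<k}\renorm_m$, since each summand satisfies $\renorm_m\ge\renorm_{k-1}$), giving the diagonal bound $\sqrt{\go}\sum_{m=1}^{\go-1}(m(m+1))^{-1/2}\le\sqrt{\go}\,H_{\go-1}$, and then verifies $1+H_{\go-1}/(\go-1)\le\tfrac{3}{2}\sqrt{\go}$ for all $\go\ge 2$. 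This is routine but should be written out rather than asserted.
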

\noindent In particular, for $\go=2$ we recover the main result of \cite{VZ22} exactly.

\begin{definition}[Heisenberg--Weyl Basis]
Fix $\go\geq 2$ and let $\omega = \exp(2\pi i/\go)$.
Define the $\go$-dimensional clock and shift matrices respectively via
\begin{equation*}
	X \ket{j}= \ket{j+1},\qquad Z\ket{j} = \om^j \ket{j},\qquad \textnormal{for all} \qquad j\in \mathbb{Z}_\go.
\end{equation*}
Note that $X^\go=Z^\go=\un$. See more in \cite{AEHK}. 
%In the following, everything is $\!\!\mod\, \go$. 
Then the Heisenberg--Weyl basis for $M_\go(\C)$ is
\[
\textnormal{HW}(\go):=\{X^\ell Z^m\}_{\ell,m\in \{0,1,\ldots, \go-1\}}\,.
\]
\end{definition}
\noindent Any observable $A\in M_\go(\bC)^{\otimes n}$ has a unique Fourier expansion with respect to $\textnormal{HW}(\go)$ as well:
\begin{equation}
    \label{expHW}
	A=\sum_{\vec{\ell},\vec{m}\in \mathbb{Z}_\go^n}\widehat{A}(\vec{\ell},\vec{m})X^{\ell_1}Z^{m_1}\otimes \cdots \otimes X^{\ell_n}Z^{m_n},
\end{equation}
where $\widehat{A}(\vec{\ell},\vec{m})\in\C$ is the Fourier coefficient at $(\vec{\ell},\vec{m})$. 
We say that $A$ is \emph{of degree $d$} if $\widehat{A}(\vec{\ell},\vec{m})=0$ whenever 
\begin{equation*}
	|(\vec{\ell},\vec{m})|:=\sum_{j=1}^{n}(\ell_j+m_j)>d.
\end{equation*}
Here, $0\le \ell_j,m_j\le \go-1$.

Unlike in the GM expansion, HW Fourier coefficients may be complex-valued.
In fact, because the spectra of Heisenberg--Weyl matrices are the roots of unity, it is natural to pursue a reduction to a scalar BH inequality over $\Z_\go^n$---precisely the inequality needed for classical learning on functions on $\Z_\go^n$.
This reduction works when $\go$ is prime.

\begin{theorem}[Qudit Bohnenblust--Hille, Heisenberg--Weyl Basis]\label{thm:bh HW}
	Fix a prime number $\go\ge 2$ and suppose $d\ge 1$.
    Consider an observable $A\in M_\go(\C)^{\otimes n}$ of degree $d$.
    Then we have
	\begin{equation}
        \label{ineq:bh-hw}
		\|\widehat{A}\|_{\frac{2d}{d+1}}\le C(d,\go)\|A\|_{\textnormal{op}},
	\end{equation}
	with $C(d,\go)\le (\go+1)^d \textnormal{BH}_{\mathbb{Z}_\go}^{\le d}$.
\end{theorem}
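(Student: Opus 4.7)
The plan is to adapt the reduction strategy used in Theorem~\ref{thm:GM} (and \cite{VZ22} for the qubit case), but now targeting the scalar cyclic BH inequality of Corollary~\ref{cor:bh cyclic groups} rather than the hypercube BH. The primality of $\go$ enters through the mutually unbiased basis (MUB) structure of the Heisenberg--Weyl group: since $\Z_\go^2 \cong \mathbb{F}_\go^2$ is a 2-dimensional vector space over a field, the $\go^2-1$ non-identity HW matrices partition into $\go+1$ maximal abelian subgroups $\mathcal{M}_\sigma$ indexed by lines $\sigma \in \mathbb{P}^1(\mathbb{F}_\go)$, and each $\mathcal{M}_\sigma$ admits a simultaneous eigenbasis $\mathcal{B}_\sigma = \{|v_\sigma^j\rangle : j \in \Z_\go\}$; these $\go+1$ bases are pairwise unbiased (Ivanovic, Wootters--Fields).

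For each ``direction assignment'' $\vec\sigma = (\sigma_1, \dots, \sigma_n) \in \mathbb{P}^1(\mathbb{F}_\go)^n$ I will form the product MUB $\mathcal{B}_{\vec\sigma} = \bigotimes_k \mathcal{B}_{\sigma_k}$ and define a scalar function $f_{\vec\sigma} : \Z_\go^n \to \mathbb{C}$ via the diagonal values
\[
f_{\vec\sigma}(\vec j) := \langle v_{\vec\sigma}^{\vec j} \,|\, A \,|\, v_{\vec\sigma}^{\vec j}\rangle.
\]
Normalization immediately gives $\|f_{\vec\sigma}\|_\infty \leq \|A\|_{\textnormal{op}}$. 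A short MUB-vanishing computation then shows that $f_{\vec\sigma}$ depends only on the HW terms of $A$ whose direction at each site matches $\vec\sigma$: for $U = X^\ell Z^m$ a site-$k$ HW operator with $(\ell, m)$ not in direction $\sigma_k$, one picks a generator $V \in \mathcal{M}_{\sigma_k}$, verifies via $ZX = \omega XZ$ that $V^\dagger U V = \omega^{c} U$ with $c \neq 0$, and deduces $\langle v_{\sigma_k}^{j_k} | U | v_{\sigma_k}^{j_k}\rangle = 0$. Each surviving HW term contributes a monomial $\omega^{\vec k \cdot \vec j}$ (up to a phase) to $f_{\vec\sigma}$, where $\vec k \in \Z_\go^n$ records the magnitudes along the chosen directions. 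Since at each site the magnitude satisfies $k_i \leq \ell_i + m_i$, the cyclic-group degree $\sum_i k_i$ of $f_{\vec\sigma}$ is at most $\deg A = d$.

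Applying Corollary~\ref{cor:bh cyclic groups} to each $f_{\vec\sigma}$ yields $\|\widehat{f_{\vec\sigma}}\|_{2d/(d+1)} \leq \BH_{\Z_\go}^{\leq d}\|A\|_{\textnormal{op}}$. To assemble the full inequality, sum the $p$-th powers ($p = 2d/(d+1)$) over all $(\go+1)^n$ direction assignments: an HW coefficient $|\widehat A(\vec\ell, \vec m)|^p$ with $s$ active sites appears with multiplicity exactly $(\go+1)^{n-s}$, since each of the $n-s$ inactive sites is freely assigned to any of the $\go+1$ directions. Using $s \leq d$, this multiplicity is at least $(\go+1)^{n-d}$, and rearranging gives $\|\widehat A\|_{2d/(d+1)} \leq (\go+1)^{d/p}\BH_{\Z_\go}^{\leq d}\|A\|_{\textnormal{op}}$, i.e.\ $(\go+1)^{(d+1)/2}\BH_{\Z_\go}^{\leq d}\|A\|_{\textnormal{op}}$, comfortably inside the claimed $(\go+1)^d\BH_{\Z_\go}^{\leq d}\|A\|_{\textnormal{op}}$.

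The main obstacle is the MUB-vanishing identification together with the bookkeeping translating between the HW degree $\sum(\ell_k + m_k)$ and the cyclic-group degree $\sum k_k$, including the phases picked up when expanding powers $(XZ^t)^k$ via $ZX = \omega XZ$; these phases do not affect magnitudes but must be tracked to identify which cyclic Fourier coefficient each HW term lands on. The restriction to prime $\go$ is essential from the outset: for composite $\go$, $\Z_\go^2$ is not a vector space over a field, the $\go+1$-line decomposition collapses, and HW no longer splits cleanly into $\go+1$ maximal abelian subgroups of equal size. Handling composite $\go$ would likely require a different route, perhaps a polytorus BH reduction using Theorem~\ref{thm:remez}.
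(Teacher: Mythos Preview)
Your approach is correct and takes a genuinely different route from the paper's. The paper builds, for each qudit, a \emph{single} density matrix $\rho_k(\vec\omega)$ that is the uniform mixture over all $\go+1$ MUB directions of the selected eigenstate in each direction, then defines one scalar function $f_A(\vec\omega)=\tr[A\rho(\vec\omega)]$ on the enlarged domain $\Omega_\go^{(\go+1)n}$ and applies the cyclic BH inequality once; the factor $(\go+1)^d$ arises because this mixing dilutes each HW Fourier coefficient by $(\go+1)^{-\kappa}$ with $\kappa\le d$ the number of active sites. You instead enumerate directions: for each of the $(\go+1)^n$ assignments $\vec\sigma$ you take diagonal values of $A$ in the corresponding pure product basis, obtain a family of functions $f_{\vec\sigma}$ on $\Omega_\go^n$, apply cyclic BH to each, and combine by summing $p$-th powers over $\vec\sigma$. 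Both arguments rest on the same MUB-vanishing computation (the paper's Lemma~\ref{lem:orthogonal}) and the same site-wise degree comparison $k_i\le\ell_i+m_i$, so they are close in spirit. Your combinatorial assembly, however, gives the sharper constant $(\go+1)^{(d+1)/2}$ in place of $(\go+1)^d$, because the loss enters once in the $\ell_p$ aggregation rather than coefficient by coefficient; the paper's version has the virtue of needing only a single application of scalar BH and of fitting the same density-matrix template used in the Gell--Mann proof.
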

\begin{table}
    \centering
    \makebox[\textwidth][c]{
    \renewcommand{\arraystretch}{1.5}
    \begin{tabular}{ |l|c|c||l |c |c |} 
     BH Const. & Best known bound & Source & BH Const. & Best known bound & Source\\
     \hline
     \multirow{2}{*}{\;\;$\BH^{\leq d}_{\{\pm1\}}$} & \multirow{2}{*}{$C^{\sqrt{d\log d}}$} & \multirow{2}{*}{\cite{DMP}} & $\;\;\BH^{\leq d}_{M_2}$ & $3^d\BH_{\{\pm1\}}^{\leq d}\leq C^{d}$& \cite{VZ22}\\
    \hhline{~~~||---}
     & & & $\;\;\BH^{\leq d}_{\mathrm{GM}(\go)}$ & $\big(\tfrac32(\go^2-\go)\big)^d \textnormal{BH}^{\le d}_{\{\pm 1\}}\leq \go^{C d}$ & Thm. \ref{thm:GM}\\
     \hline
     $\;\;\BH^{\leq d}_{\Z_\go}$ & $(C\log \go)^{d+\sqrt{d\log d}}$ & Cor. \ref{cor:bh cyclic groups} & $\;\;\BH^{\leq d}_{\mathrm{HW}(\go)}$ & $(\go+1)^d\cdot\BH_{\Z_\go}^{\leq d}\leq (C \go\log \go)^{C'd}$\;\textsuperscript{*} & Thm. \ref{thm:bh HW}\\
     \hline
     $\;\;\BH^{\leq d}_{\mathbb{T}}$ & $C^{\sqrt{d\log d}}$ & \cite{BPS} &  \multicolumn{3}{c }{} \\
     \cline{1-3}
    \end{tabular}
    }
    \renewcommand{\arraystretch}{1}
    \vspace{-2.5em}
    \begin{flushright}
    \begin{minipage}{8.5cm}
    \footnotesize
        $*$ : For $\go$ prime.
    \end{minipage}
    \end{flushright}
    
    \vspace{0.5em}
    \caption{Best known constants in Bohnenblust--Hille inequalities for (tensor-)product spaces at the time of this writing.
    Each BH inequality in the right half is proved via a reduction to the scalar BH inequality directly to its left.
    The results are accurate for all $K \geq 3$, and each appearance of $C$ and $C'$ is a different constant $>1$.
    For the bounds proved in this work, no prior bounds were known.}
    \label{fig:BH-table}
\end{table}

A summary of the Bohnenblust--Hille inequalities proved in this paper is provided in Table \ref{fig:BH-table}, where we denote the best constants in Eqs. \eqref{ineq:cyclic-BH}, \eqref{ineq:bh GM}, and \eqref{ineq:bh-hw} respectively by $\textnormal{BH}_{\mathbb{Z}_\go}^{\le d}$, $\textnormal{BH}_{\mathrm{GM}(\go)}^{\le d}$, and $\textnormal{BH}_{\mathrm{HW}(\go)}^{\le d}$.

\subsection{Qudit Bohnenblust--Hille in the Gell-Mann basis}

In this section we prove Theorem \ref{thm:GM} by reducing \eqref{ineq:bh GM} to the hypercube Bohnenblust--Hille inequality on $\{-1, 1\}^{n (\go^2-1)}$.
(The $\go=2$ case was done in \cite{VZ22}).

The central part of the reduction is a coordinate-wise construction of  density matrices $\rho(\boldsymbol{x})\in M_\go(\C)$ parametrized by $\boldsymbol{x}\in \{-1,1\}^{\go^2-1}=:H_\go$.
It will be convenient to partition the coordinates of
$\boldsymbol{x}$ as $\boldsymbol{x}=(x,y,z)\in \{-1, 1\}^{\binom{\go}{2}}\times \{-1, 1\}^{\binom{\go}{2}}\times \{-1, 1\}^{\go-1}$
with indices
\begin{align*}
	x=(x_{jk})_{1\le j<k\le \go}, \qquad y=(y_{jk})_{1\le j<k\le \go},\qquad 
 \text{and} \qquad z=(z_m)_{1\le m\le \go-1}\,.
\end{align*}

\begin{lemma}
	\label{action}
	For any $(x,y,z)\in H_\go$, there exists a positive semi-definite Hermitian
 %an unnormalized density 
 matrix $\rho=\rho(x,y,z)$ with $\tr[\rho]=3\binom{\go}{2}$ such that for all $1\le j<k\le \go$ and $1\le m \le \go-1$,
	\begin{align}
    \label{action-xs}
	\tr [\bA_{jk} \rho(x, y, z)] &= {\textstyle\sqrt{\!\frac{\go}{2}}}x_{jk},\\%\, 1\le j<k\le \go,
    \label{action-ys}
	\tr [\bB_{jk} \rho(x, y, z)] &= {\textstyle\sqrt{\!\frac{\go}{2}}}y_{jk},\\%\, 1\le j<k\le \go,
    \label{action-zs}
	\tr [\bC_{m} \rho(x, y, z)] &= {\textstyle\sqrt{\!\frac{\go}{2}}}z_{m}.%\, 1\le j\le \go-1\,.
	\end{align}
\end{lemma}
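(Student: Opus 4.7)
My plan is to write $\rho = \sum_{1\le j<k\le \go}\sigma_{jk} + D$, where each $\sigma_{jk}$ is a PSD matrix supported on the two-dimensional subspace spanned by $|e_j\rangle$ and $|e_k\rangle$ handling the $(x_{jk},y_{jk})$ constraints, and $D$ is a PSD diagonal matrix handling the $z_m$ constraints together with the trace normalization.

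For each $j<k$ I set
\[
\sigma_{jk} := E_{jj} + E_{kk} + \tfrac{x_{jk}-iy_{jk}}{2}E_{jk} + \tfrac{x_{jk}+iy_{jk}}{2}E_{kj}.
\]
Its nonzero $2\times 2$ block has trace $2$ and determinant $1-(x_{jk}^2+y_{jk}^2)/4=1/2$, so $\sigma_{jk}$ is PSD of trace $2$. A direct calculation gives $\tr[\bA_{j'k'}\sigma_{jk}]=\sqrt{\go/2}\,x_{jk}$ when $(j',k')=(j,k)$ and $0$ otherwise, and analogously for $\bB_{j'k'}$. Because every index $\ell$ belongs to exactly $\go-1$ of the pairs, $\sum_{j<k}\sigma_{jk}$ has diagonal entries identically equal to $\go-1$; combined with the tracelessness of each $\bC_m$, this forces $\tr[\bC_m\sum_{j<k}\sigma_{jk}]=0$. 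Thus the $\bA$ and $\bB$ identities are already satisfied, and the remaining $\bC_m$ identities and total trace reduce to conditions on $D$ alone.

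Since $\{\mathbf{I}/\sqrt{\go},\bC_1,\ldots,\bC_{\go-1}\}$ is an orthonormal basis for the diagonal matrices of $M_\go(\C)$ under the normalized Hilbert--Schmidt inner product $\tfrac{1}{\go}\tr[A^\dagger B]$, the requirements $\tr[D]=3\binom{\go}{2}-2\binom{\go}{2}=\binom{\go}{2}$ and $\tr[\bC_m D]=\sqrt{\go/2}\,z_m$ uniquely determine
\[
D := \tfrac{\go-1}{2}\,\mathbf{I} + \tfrac{1}{\sqrt{2\go}}\sum_{m=1}^{\go-1}z_m\,\bC_m.
\]
Provided $D$ is PSD, $\rho=\sum_{j<k}\sigma_{jk}+D$ is a sum of PSD matrices, hence PSD, and satisfies all the required trace identities and total trace $3\binom{\go}{2}$ by construction.

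The main obstacle is verifying that $D$ is PSD, equivalently that each diagonal entry $D_{\ell\ell}\geq 0$ for every sign pattern $z\in\{-1,1\}^{\go-1}$. Using the explicit entries $(\bC_m)_{\ell\ell}=\sqrt{\go/(m(m+1))}$ for $\ell\leq m$, $-\sqrt{m\go/(m+1)}$ for $\ell=m+1$, and $0$ otherwise, the minimum over sign patterns is, for $\ell\geq 2$,
\[
D_{\ell\ell}^{\min} = \tfrac{\go-1}{2} - \sqrt{(\ell-1)/(2\ell)} - \tfrac{1}{\sqrt{2}}\sum_{m=\ell}^{\go-1}\tfrac{1}{\sqrt{m(m+1)}},
\]
with the first subtracted term absent when $\ell=1$. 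The first term is at most $1/\sqrt{2}$, and an integral comparison yields $\sum_{m=\ell}^{\go-1}\tfrac{1}{\sqrt{m(m+1)}}\leq 2\ln\bigl((\sqrt{\go}+\sqrt{\go+1})/(\sqrt{\ell}+\sqrt{\ell+1})\bigr)+1 = O(\log \go)$. Consequently $D_{\ell\ell}^{\min}\geq (\go-1)/2 - 1/\sqrt{2} - O(\log\go)$, which is nonnegative once $\go$ exceeds a modest universal constant; the few remaining small values of $\go$ are handled by direct verification (the bound becomes exactly $0$ at $\go=2$, which is why the trace constant in the lemma is precisely $3\binom{\go}{2}$).
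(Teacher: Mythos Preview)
Your construction is correct and in fact identical to the paper's: your $\sigma_{jk}$ equals the paper's $A_{jk}^{(x_{jk})}+B_{jk}^{(y_{jk})}$ (the sum of two rank-one eigenprojectors), and your diagonal part $D$ is exactly the paper's $\sum_m \tfrac{z_m}{\sqrt{2\go}}\bC_m+\tfrac{\go-1}{2}\mathbf{I}$. Your verification that $\tr[\bC_m\sum_{j<k}\sigma_{jk}]=0$ via the observation that the diagonal of $\sum_{j<k}\sigma_{jk}$ is constant (hence orthogonal to every traceless $\bC_m$) is cleaner than the paper's explicit case-by-case cancellation, and you give a quantitative argument for the positive semidefiniteness of $D$ where the paper simply asserts positivity of the diagonal entries.
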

\begin{proof}

For $b\in\{-1,1\}$ and $1\le j<k\le \go$ consider unit vectors
\[
\al_{jk}^{(b)} = (e_j+be_k)/\sqrt{2}, \qquad \beta_{jk}^{(b)}=(e_j+b i e_k)/\sqrt{2}\,.
\]
These are respectively eigenvectors of $\bA_{jk}$ and $\bB_{jk}$ with eigenvalue $b\sqrt{\!\go/2\,}$. 
Now consider the density matrices, again for $b\in\{-1,1\}$ and $1\le j<k\le \go$,
\[
A_{jk}^{(b)}= \ketbra{\al_{jk}^{(b)}}{\al_{jk}^{(b)}} ,\qquad B_{jk}^{(b)}= \ketbra{\beta_{jk}^{(b)}}{\beta_{jk}^{(b)}}.
\]
Finally, define
\[
\rho=\rho(x, y, z) =\sum_{1\le j<k\le \go}  A_{jk}^{(x_{jk})} + \sum_{1\le j<k\le \go} B_{jk}^{(y_{jk})} + \sum_{m=1}^{\go-1}  \tfrac{z_m}{\sqrt{2\go}}\bC_m+ \tfrac{\go-1}{2}\cdot  \un\,.
\]
Observe $\rho$ is a positive semi-definite Hermitian matrix: each $A_{jk}^{(x_{jk})},B_{jk}^{(y_{jk})}$ are positive semi-definite Hermitian and the remaining summands form a diagonal matrix with positive entries.
Also, we have 
\begin{equation}
	\label{trace-rho}
	\tr \,\rho = \frac{\go(\go-1)}{2}+\frac{\go(\go-1)}{2}+0+\frac{\go(\go-1)}{2}=3\binom{\go}{2}\,.
\end{equation}

	Note for any $1\leq j<k\leq \go$ the anti-commutative relationship
	$\bA_{jk} \bB_{jk} + \bB_{jk}\bA_{jk}=0.$
    This implies that (see for example \cite[Lemma 2.1]{VZ22}) for any $b\in\{-1,1\}$,
    $
	\braket{\bA_{jk} \beta_{jk}^{(b)},\beta_{jk}^{(b)}} = \braket{\bB_{jk} \alpha_{jk}^{(b)},\alpha_{jk}^{(b)}}=0,
    $
    and thus
    \begin{equation*}
    \tr[\bA_{jk} B_{jk}^{(b)}]=\tr[\bB_{jk} A_{jk}^{(b)}]=0\,.
    \end{equation*}
	When $(j, k)\neq (j', k')$ then the operators ``miss'' each other and we get for all $b\in\{-1,1\}$
	\begin{equation*}
		\tr[\bA_{jk} B_{j'k'}^{(b)}]=\tr[\bB_{jk} A_{j'k'}^{(b)}]=\tr[\bA_{jk} A_{j'k'}^{(b)}]=\tr[\bB_{jk} B_{j'k'}^{(b)}]=0.
	\end{equation*}
\begin{comment}

	\begin{align*}
		&\tr(\bA_{jk} B_{j'k'}^{(b)})=0,\, (j, k)\neq (j', k')
		\\
		&\tr(\bB_{jk} A_{j'k'}^{(b)})=0,\, (j, k)\neq (j', k')
		\\
		&\tr(\bA_{jk} A_{j'k'}^{(b)})=0,\, (j, k)\neq (j', k')
		\\
		&\tr(\bB_{jk} B_{j'k'}^{(b)})=0,\, (j, k)\neq (j', k')\,.
	\end{align*}
\end{comment}
    By orthogonality the remaining summands in $\rho$ contribute $0$ to $\tr(\bA_{jk}\rho),\tr(\bB_{jk}\rho)$.
    We conclude \eqref{action-xs} and \eqref{action-ys} hold.
    
	So far all follows more or less the path of \cite{VZ22}. A bit more surprising are the cancellations giving \eqref{action-zs}. For any $x,y\in\{-1,1\}^{\binom{K}{2}}$, we claim
	\begin{equation}
		\label{CjA}
		\tr \bigg[\bC_m \Big(\sum_{1\le j<k\le \go}    A_{jk}^{(x_{jk})}\Big)\bigg]=0 \qquad\text{and}\qquad
		\tr \bigg[\bC_m \Big(\sum_{1\le j<k\le \go}    B_{jk}^{(y_{jk})}\Big)\bigg]=0\,.
	\end{equation}
	Let us prove the first part of \eqref{CjA} with Figure \ref{fig:C-cancel-matrix} for reference.
	For a fixed $k>m+1$ we can immediately see that $\sum_{j=1}^{k-1} \tr [\bC_m A^{(x_{jk})}_{j k}] = \frac12\renorm_m(1+1+\dots+1 -m)=0$.
	We are left to consider the $j<k\le m$ summation and the $j\le m, k=m+1$ summation.
	The first one gives $\binom{m}{2}\renorm_m$, while the second one gives $\frac12m(1-m)\renorm_m = -\binom{m}{2}\renorm_m$, so their combined sum is again $0$.
    This proves the LHS of \eqref{CjA}; the RHS is argued identically.
    
	Now, the rest of $\rho(x, y, z)$ is $\sum_{m=1}^{\go-1}  \frac{z_m}{\sqrt{2\go}}\bC_m + \frac{\go-1}{2}\un$, a sum of orthogonal matrices.
	Hence \eqref{action-zs} follows from \eqref{CjA} and this orthogonality.
	\begin{figure}
		\centering
		\[
		\begin{tikzpicture}[decoration={brace,amplitude=5pt},baseline=(current bounding box.west)]
			\matrix (m) [
			matrix of math nodes,
			nodes in empty cells,
			nodes={minimum size=6.5mm, anchor=center},
			left delimiter=(,right delimiter=)
			] {
				1 &   &        &   &    &   &        & \\
				& 1 &        &   &    &   &        & \\
				&   & \rotatebox[origin=c]{-45}{$\cdots$} &   &    &   &        & \\
				&   &        & 1 &    &   &        & \\
				&   &        &   & \!\!\!\shortminus m\!\! &   &        & \\
				&   &        &   &    & 0 &        & 0\\
				&   &        &   &    &   & \rotatebox[origin=c]{-45}{$\cdots$} &\\
				&   &        &   &    &   &        & 0\\
			};
			\draw[decorate] (m-4-4.south west) -- (m-1-1.south west) node[below=5pt,midway,sloped] {\footnotesize $m$-many};
			\draw (m-1-1.north east) -- (m-1-1.south east) -- (m-2-2.north east) -- (m-2-2.south east) -- (m-3-3.north);
			\draw (m-3-3.east) -- (m-3-3.south east) -- (m-4-4.north east) -- (m-4-4.south east) -- (m-5-5.north east) -- (m-5-5.south east) -- (m-6-6.north east) -- (m-6-6.south east) -- (m-7-7.north);
			\draw (m-7-7.east) -- (m-7-7.south east) -- (m-8-8.north east)--(m-1-8.north east) -- (m-1-1.north east);
			\draw (m-4-4.north east) -- (m-1-4.north east);
			\draw (m-5-5.north east) -- (m-1-5.north east);
			\draw (m-5-5.north east) -- (m-5-8.north east);
			\draw (m-6-6.north east) -- (m-6-8.north east);
			\node[fit=(m-2-3)(m-2-4)]{$\renorm_m$};
			\node[fit=(m-1-6)(m-4-8), text height = 4em]{$\frac12\renorm_m$};
			\node[fit=(m-5-6)(m-5-8), text height = 1.17em]{$-\frac12m\renorm_m$};
			% \node[fit=(hi-6-7)(hi-7-8)]{\,\,$0$};
			
			\draw (m-3-5.north) + (60:2.1) node(B){$\frac{1-m}{2}\renorm_m$};
			\path [out=90,in=195]   (m-2-5.north) edge (B);
		\end{tikzpicture}
		\]
		\caption{Collating the quantities $\tr[\bC_mA_{jk}^{(b)}]$ and  $\tr[\bC_mB_{jk}^{(b)}]$.
			In the upper triangle, a value $v$ in coordinate $(j,k)$ means $\tr[\bC_mA_{jk}^{(b)}]=\tr[\bC_mB_{jk}^{(b)}]=v$ for any $b$.
			For reference, the (unnormalized) definition of $\bC_m$ is recorded on the diagonal.}
		\label{fig:C-cancel-matrix}
	\end{figure}
\end{proof}

Now we are ready to prove Theorem \ref{thm:GM}.

\begin{proof}[Proof of Theorem \ref{thm:GM}]
Let us normalize $\rho$ as $r(x, y, z) :=\frac13\binom{\go}{2}^{-1} \rho (x, y, z)$, so $\tr [r(x, y, z)]=1$.
Now choose any $(\vec x, \vec y, \vec z) \in H_\go^n$ with 
\begin{equation*}
	\vec x=\big(x^{(1)},\dots, x^{(n)}\big), \qquad
	\vec y=\big(y^{(1)},\dots, y^{(n)}\big), \qquad
	\vec z=\big(z^{(1)},\dots, z^{(n)}\big), 
\end{equation*}
and $\big(x^{(j)},y^{(j)},z^{(j)}\big)\in H_\go$ for each $1\le j\le n$.
%$\{-1, 1\}^{n{\go\choose 2}} \times \{-1, 1\}^{n{\go\choose 2}} \times \{-1, 1\}^{n(\go-1)}$ 
We can consider a tensor of $r$ states
\[
r(\vec x, \vec y, \vec z) := r\big(x^{(1)}, y^{(1)}, z^{(1)}\big) \otimes  r\big(x^{(2)}, y^{(2)}, z^{(2)}\big) \otimes\dots \otimes r\big(x^{(n)}, y^{(n)}, z^{(n)}\big)\,.
\]
Recall that any GM observable $\cA$ of degree at most $d$ has the unique expansion
\begin{equation*}
	\cA=\sum_{\al=(\al_1,\dots, \al_n)\in \Lambda_\go^n}\widehat{\cA}_\al M_{\al_1}\otimes \cdots \otimes M_{\al_n}
\end{equation*}
where $\{M_\al \}_{\al\in  \Lambda_\go}=\textnormal{GM}(\go)$ and $\widehat{\cA}_\al=0$ if more than $d$ matrices of $M_{\al_j}, 1\le j\le n$ are not identity matrices.

By Lemma \ref{action}, for any $\al=(\al_1,\dots, \al_n)\in \Lambda_\go^n$ with $|\alpha|=:\kappa\le d$, the map
\begin{equation*}
	(\vec x,\vec y,\vec z)\mapsto \tr \left[{\textstyle\bigotimes_{j=1}^n M_{\al_1}}\cdot r(\vec x, \vec y, \vec z)\right]
\end{equation*}
is a multilinear monomial of degree-$\kappa$ on the Boolean cube $H_\go^n = \{-1, 1\}^{n(\go^2-1)} $ with coefficient 
\[
\left(\frac{\sqrt{\go/2}}{3\binom{\go}{2}}\right)^\kappa.
\]

Since the coefficients of this scalar polynomial are of the form
\[
\left(\frac{\sqrt{\go/2}}{3\binom{\go}{2}}\right)^\kappa \widehat{\cA}_\al,\qquad 0\le \kappa \le d\,,
\]
the moduli of those coefficients are at least 
\[
\frac{1}{\big(\frac32(\go^2-\go)\big)^d} |\widehat{\cA}_\al|\,.
\]
So by hypercube Bohnenblust--Hille inequalities as in \cite{DMP} we have
\begin{equation*}
	\Big(\sum_\al |\widehat{\cA}_\al|^{\frac{2d}{d+1}}\Big)^{\frac{d+1}{2d}} \le \big(\tfrac32(\go^2-\go)\big)^d \textnormal{BH}^{\le d}_{\{\pm 1\}}\sup_{(\vec x,\vec y,\vec z)\in H_\go^n}|\tr(\cA\cdot r(\vec x, \vec y, \vec z)|\, .
\end{equation*}
On the other hand, because $r(x,y,z)$ is a density matrix,
\[
|\tr[\cA\cdot r(\vec x, \vec y, \vec z)]|\le \|\cA\|_{\textnormal{op}}\,.
\]
All combined, we get 
\begin{equation*}
\Big(\sum_\al |\widehat{\cA}_\al|^{\frac{2d}{d+1}}\Big)^{\frac{d+1}{2d}} \le \big(\tfrac32(\go^2-\go)\big)^d C^{\sqrt{d\log d}} \|\cA\|_{\textnormal{op}}\,.
\qedhere
\end{equation*}
\end{proof}

\subsection{Qudit Bohnenblust--Hille in the Heisenberg--Weyl basis}
\label{sec:hw-bh}

Here we give a proof of Theorem \ref{thm:bh HW} by reduction to the Cyclic BH inequality.
We collect first a few facts about $X$ and $Z$.
In what follows $\langle\cdot,\cdot\rangle$ denotes the inner product on $\C^n$ that is linear in the second argument.

\begin{lemma}\label{lem:facts}
	We have the following:
	\begin{enumerate}
		\item $\{X^\ell Z^m: \ell, m\in \mathbb{Z}_\go\}$ form a basis of $M_\go(\C)$.
		\item For all $k,\ell,m\in \mathbb{Z}_\go$:
		\begin{equation*}
			(X^\ell Z^m)^k=\om^{\frac{1}{2}k(k-1)\ell m}X^{k\ell} Z^{km}
		\end{equation*}
		and for all $\ell_1,\ell_2,m_1,m_2\in\mathbb{Z}_\go$:
		\begin{equation*}
			X^{\ell_1}Z^{m_1} X^{\ell_2}Z^{m_2}=\om^{\ell_2 m_1-\ell_1 m_2}X^{\ell_2}Z^{m_2}X^{\ell_1}Z^{m_1}.
		\end{equation*}
		\item If $\go$ is prime, then for any $(0,0)\neq (\ell,m)\in \mathbb{Z}_\go\times \mathbb{Z}_\go$, the eigenvalues of $X^\ell Z^m$ are $\{1,\om,\dots, \om^{\go-1}\}$. This is not the case if $\go$ is not prime. 
	\end{enumerate}
\end{lemma}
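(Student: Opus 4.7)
The plan is to establish each of the three claims in sequence, with the difficulty concentrated in Part 3.

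For Part 1, I would equip $M_\go(\mathbb{C})$ with the normalized trace inner product $\langle A,B\rangle := \frac{1}{\go}\tr[A^\dagger B]$ and show the $\go^2$ operators $\{X^\ell Z^m\}_{\ell,m\in\mathbb{Z}_\go}$ are orthonormal. This reduces to a direct computation in the standard basis: the factor $X^{\ell'-\ell}$ is a cyclic shift whose diagonal vanishes unless $\ell = \ell'$, and in that case the remaining diagonal matrix $Z^{m'-m}$ has trace $\go\,\delta_{m,m'}$. Orthonormality together with $\go^2 = \dim M_\go(\mathbb{C})$ gives the basis claim.

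For Part 2, everything stems from the single identity $ZX = \omega XZ$, verified by applying both sides to $\ket{j}$. Iterating produces $Z^m X^\ell = \omega^{\ell m}X^\ell Z^m$. To compute $(X^\ell Z^m)^k$, the plan is to push each $Z^m$ in the product $X^\ell Z^m \cdots X^\ell Z^m$ rightward past all the $X^\ell$'s to its right: the $j$-th $Z^m$ crosses $j$ copies of $X^\ell$ and contributes a phase $\omega^{j\ell m}$, so the total phase is $\omega^{\ell m(0+1+\cdots+(k-1))}=\omega^{\ell m\cdot k(k-1)/2}$, while the surviving operators collapse to $X^{k\ell}Z^{km}$. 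The braiding relation falls out by expanding both $X^{\ell_1}Z^{m_1}X^{\ell_2}Z^{m_2}$ and $X^{\ell_2}Z^{m_2}X^{\ell_1}Z^{m_1}$ in this way to get $\omega^{\ell_2 m_1}X^{\ell_1+\ell_2}Z^{m_1+m_2}$ and $\omega^{\ell_1 m_2}X^{\ell_1+\ell_2}Z^{m_1+m_2}$ respectively, then comparing the scalar factors.

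Part 3 is where primeness enters. For $\go$ an odd prime and $(\ell,m)\neq(0,0)$, Part 2 yields $(X^\ell Z^m)^\go = \omega^{\ell m\cdot \go(\go-1)/2}X^{\go\ell}Z^{\go m} = \un$, since $(\go-1)/2$ is an integer (making the phase a trivial power of $\omega^\go=1$) and $X^\go = Z^\go = \un$. The minimal polynomial of $X^\ell Z^m$ therefore divides $\lambda^\go - 1$, which has simple roots, so $X^\ell Z^m$ is diagonalizable with eigenvalues among the $\go$-th roots of unity. To force all of them to appear, I would show the minimal polynomial has degree exactly $\go$: any lower-degree vanishing relation $\sum_{k=0}^{d}a_k(X^\ell Z^m)^k = 0$ with $d<\go$ translates via Part 2 into a linear dependence among $\{X^{k\ell}Z^{km}\}_{k=0}^{d}$, and primeness of $\go$ combined with $(\ell,m)\neq(0,0)$ guarantees the indices $(k\ell \bmod \go, km \bmod \go)$ are pairwise distinct, so Part 1 forces every $a_k=0$, contradicting minimality. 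Since $X^\ell Z^m$ is a $\go\times\go$ matrix whose minimal polynomial is $\lambda^\go - 1$, its eigenvalues are precisely $\{1,\omega,\ldots,\omega^{\go-1}\}$. The composite case fails, as seen in $X^2\in M_4(\mathbb{C})$ whose eigenvalues $\{\pm 1\}$ are only a proper subset of the fourth roots of unity. The main obstacle is this minimal-polynomial step, and the $\go=2$ case warrants separate attention since then $\go(\go-1)/2 = 1$ and the phase $\omega^{\ell m}$ can be nontrivial, placing the eigenvalues of $XZ$ on a rotated copy of the roots of unity rather than the roots themselves.
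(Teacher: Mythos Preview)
Your argument is correct for odd primes and takes a genuinely different route from the paper in Parts~1 and~3. For Part~1 the paper proves linear independence directly: it assumes $\sum a_{\ell,m}X^\ell Z^m=0$, pairs against $e_j$ and $e_{j+k}$ to obtain $\sum_m a_{k,m}\omega^{jm}=0$ for all $j$, and invokes Vandermonde invertibility; your orthonormality computation via the trace inner product is equally valid and arguably tidier. For Part~3 the paper constructs explicit eigenvectors $\zeta_k=\sum_{j\in\mathbb{Z}_K}\omega^{\frac{1}{2}j(j-1)\ell m - jk}e_{j\ell}$ (treating the case $\ell\neq 0$; the case $\ell=0$ is immediate from the spectrum of $Z^m$) and verifies $X^\ell Z^m\zeta_k=\omega^k\zeta_k$ by hand. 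Your minimal-polynomial argument is more structural: it avoids writing down any eigenvectors, and the step forcing the minimal polynomial to have full degree is a clean application of Part~1. Since the only downstream use is the existence of a unit eigenvector for each eigenvalue, nothing is lost by your approach; the paper's explicit formula is not invoked again.

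Your caution about $K=2$ is well placed---indeed sharper than the paper. The statement of Part~3 is actually false at $K=2$: for instance $XZ=\bigl(\begin{smallmatrix}0&-1\\1&0\end{smallmatrix}\bigr)$ has eigenvalues $\pm i$, not $\{1,\omega\}=\{\pm 1\}$. The paper's reindexing $j\mapsto j+1$ in the eigenvector verification implicitly uses $\omega^{\frac{1}{2}K(K-1)\ell m}=1$, which holds for odd $K$ (since $\tfrac{K-1}{2}\in\mathbb{Z}$ makes the exponent a multiple of $K$) but fails for $K=2$ when $\ell m$ is odd. This is harmless for the intended application, since the qubit case of the Heisenberg--Weyl Bohnenblust--Hille inequality is already covered by prior work, so your odd-prime argument suffices.
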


\begin{proof}Considering each statement one-by-one:
	\begin{enumerate}
		\item Suppose that $\sum_{\ell,m}a_{\ell,m}X^\ell Z^m=0$. For any $j,k\in \mathbb{Z}_\go$, we have
		\begin{equation*}
			\sum_{\ell,m} a_{\ell,m}	\langle X^\ell Z^m e_j,e_{j+k}\rangle 
			=\sum_{m} a_{k,m}	\om^{jm}
			=0.
		\end{equation*}
		Since the Vandermonde matrix associated to $(1,\om,\dots,\om^{\go-1})$ is invertible, we have $a_{k,m}=0$ for all $k,m\in \mathbb{Z}_\go$.
		\item It follows immediately from the identity $ZX=\om XZ$ which can be verified directly: for all $j\in \mathbb{Z}_\go$
		\begin{equation*}
			ZXe_j= Ze_{j+1}=\om^{j+1}e_{j+1}=\om^{j+1} X e_{j}=\om XZ e_{j}.
		\end{equation*}
		\item Assume $\go$ to be prime and $(\ell,m)\neq (0,0)$. If $\ell=0$ and $m\neq 0$, then the eigenvalues of $Z^m$ are
		\[
    \{\om^{jm}:j\in\mathbb{Z}_\go\}=\{\om^{j}:j\in \mathbb{Z}_\go\},
    \]
		since $\go$ is prime. If $\ell\neq 0$, then we may relabel the standard basis $\{e_j:j\in\mathbb{Z}_\go\}$ as $\{e_{j\ell}:j\in\mathbb{Z}_\go\}$. Consider the non-zero vectors 
		\begin{equation*}
			\zeta_k:=\sum_{j\in \mathbb{Z}_\go}\om^{\frac{1}{2}j(j-1)\ell m-jk}e_{j\ell},\qquad k\in \mathbb{Z}_\go.
		\end{equation*}
		A direct computation shows: for all $k\in \mathbb{Z}_\go$
		\begin{align*}
			X^{\ell}Z^m \zeta_k
			&=\sum_{j\in \mathbb{Z}_\go}\om^{\frac{1}{2}j(j-1)\ell m-jk}\cdot \om^{j\ell m}X^{\ell}e_{j\ell}\\
			&=\sum_{j\in \mathbb{Z}_\go}\om^{\frac{1}{2}j(j+1)\ell m-jk}e_{(j+1)\ell}\\
			&=\sum_{j\in \mathbb{Z}_\go}\om^{\frac{1}{2}j(j-1)\ell m-jk+k}e_{j\ell}\\
			&=\om^k\zeta_k.
		\end{align*}
		If $\go$ is not prime, say $\go=\go_1 \go_2$ with $\go_1,\go_2>1$, then $X^{\go_1}$ has $1$ as eigenvalue with multiplicity $\go_1>1$. So we do need $\go$ to be prime.
        \qedhere
	\end{enumerate}
\end{proof}

Let us record the following observation as a lemma. 

\begin{lemma}\label{lem:orthogonal}
	Suppose that $k\ge 1$, $A,B$ are two unitary matrices such that $B^k=\un$, $AB=\lambda BA$ with $\lambda\in\C$ and $\lambda\ne 1$. 
	Suppose that $\xi$ is a non-zero vector such that $B\xi =\mu \xi$ ($\mu\ne 0$ since $\mu^k=1$). Then 
	\begin{equation*}
		\langle \xi,A\xi\rangle=0.
	\end{equation*}
\end{lemma}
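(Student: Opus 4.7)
The plan is the standard two-step evaluation of $\langle \xi, A\xi\rangle$: transport an extra $B$ around the inner product using unitarity, push it past $A$ using the commutation relation, and then cash in the eigenvalue relation for $\xi$ twice. Since the constants picked up should multiply to $\lambda^{-1}\ne 1$, the only consistent value of $\langle\xi,A\xi\rangle$ will be zero.

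Concretely, I would first record that because $B$ is unitary, the eigenvalue $\mu$ with $B\xi=\mu\xi$ satisfies $|\mu|=1$ (alternatively, $B^k=\mathbf{I}$ forces $\mu^k=1$). Then I would rewrite
\[
\langle \xi, A\xi\rangle \;=\; \langle B\xi, BA\xi\rangle
\]
using unitarity of $B$. From $AB=\lambda BA$ and $\lambda\ne 0$ (which follows from $B$ being unitary and the relation applied to nonzero vectors), I get $BA=\lambda^{-1}AB$. Plugging this in and using $B\xi=\mu\xi$ twice (once on the left, sesquilinearly, giving $\bar\mu$, and once on the right, linearly, giving $\mu$), I arrive at
\[
\langle \xi, A\xi\rangle \;=\; \bar\mu\,\lambda^{-1}\,\langle \xi, AB\xi\rangle \;=\; \bar\mu\,\mu\,\lambda^{-1}\,\langle \xi, A\xi\rangle \;=\; \lambda^{-1}\langle \xi, A\xi\rangle,
\]
where I used $|\mu|^2=1$ in the last equality. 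Since $\lambda\ne 1$, the factor $1-\lambda^{-1}$ is nonzero, and hence $\langle \xi, A\xi\rangle=0$.

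There is not really a hard step here; the only place where one must be careful is the order of the scalars when sliding them through the sesquilinear pairing (conjugation on the left slot). The hypothesis $B^k=\mathbf{I}$ is used only insofar as it provides $|\mu|=1$, which is already guaranteed by unitarity of $B$; the argument never needs the specific value of $k$ or any spectral decomposition of $B$.
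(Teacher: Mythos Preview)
Your proof is correct and follows essentially the same route as the paper's: both insert a $B$, push it through $A$ via $AB=\lambda BA$, and cancel $\mu\bar\mu=1$ to obtain $(\text{nonzero scalar})\cdot\langle\xi,A\xi\rangle=0$. The only cosmetic difference is that the paper invokes $B^\dagger=B^{k-1}$ (from $B^k=\mathbf I$) to get $B^\dagger\xi=\bar\mu\xi$, whereas you use unitarity of $B$ directly; as you note, this makes the hypothesis $B^k=\mathbf I$ superfluous.
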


\begin{proof}
	By assumption
	\[
	\mu\langle \xi,A\xi\rangle
	=\langle \xi,AB\xi\rangle 
	=\lambda \langle \xi,BA\xi\rangle.
	\]
	Since $B^\dagger=B^{k-1}$, $B^\dagger\xi=B^{k-1}\xi=\mu^{k-1}\xi=\overline{\mu}\xi$. Thus
	\[
	\mu\langle \xi,A\xi\rangle
	=\lambda \langle \xi,BA\xi\rangle
	=\lambda \langle B^\dagger\xi,A\xi\rangle
	=\lambda \mu\langle \xi,A\xi\rangle.
	\]
	Hence, $\mu(\lambda-1)\langle \xi,A\xi\rangle=0$. This gives $\langle \xi,A\xi\rangle=0$ as $\mu(\lambda-1)\ne 0$. 
\end{proof}

Now we are ready to prove Theorem \ref{thm:bh HW}:
\begin{proof}[Proof of Theorem \ref{thm:bh HW}]
Fix a prime number $\go\ge 2$. Recall that $\om=e^{\frac{2\pi i}{\go}}$. Consider the generator set of $\mathbb{Z}_\go\times \mathbb{Z}_\go$
\[
\Sigma_\go:=\{(1,0),(1,1),\dots, (1,\go-1),(0,1)\}.
\]
For any $z\in\Om_\go$ and $(\ell,m)\in \Sigma_\go$, we denote by $e^{\ell,m}_{z}$ the unit eigenvector of $X^\ell Z^m$ corresponding to the eigenvalue $z$.
% For any unit vector $\eta$, we use the convention that $\eta\otimes \eta$ denotes the rank-one projection associated to $\eta$. 
For any vector $\vec{\om}\in(\Om_\go)^{(\go+1)n}$ of the form 
\begin{equation}\label{eq:defn of vec omega}
	\vec{\om}=({\vec{\om}}^{\ell,m})_{(\ell,m)\in \Sigma_\go},
	\qquad {\vec{\om}}^{\ell,m}=(\omega^{\ell,m}_1,\dots, \omega^{\ell,m}_n)\in (\Om_\go)^{(\go+1)n},
\end{equation}
we consider the matrix 
\[
\rho(\vec{\om}):=\rho_{1}(\vec{\om})\otimes \cdots\otimes \rho_{n}(\vec{\om})
\]
where 
\[
\rho_k(\vec{\om}):=\frac{1}{\go+1}\sum_{(\ell,m)\in \Sigma_\go}\ketbra{e^{\ell,m}_{\om^{\ell,m}_k}}{e^{\ell,m}_{\om^{\ell,m}_k}}.
\]
Then each $\rho_{k}(\vec{\om})$ is a density matrix and so is $\rho(\vec{\om})$. 

Suppose that $(\ell,m)\in \Sigma_\go$ and $(\ell',m')\notin\{(k\ell,km):(\ell,m)\in \Sigma_\go\}$, then by Lemma \ref{lem:facts}
\begin{equation*}
	X^{\ell'}Z^{m'}X^{\ell}Z^{m}=\om^{\ell m'-\ell'm}X^{\ell}Z^{m}X^{\ell'}Z^{m'}.
\end{equation*}
From our choice $\om^{\ell m'-\ell'm}\neq 1$. By Lemmas \ref{lem:facts} and \ref{lem:orthogonal}
\begin{equation*}
	\tr[X^{\ell'}Z^{m'}\ketbra{e^{\ell,m}_{z}}{e^{\ell,m}_{z}}]
	=\langle X^{\ell'}Z^{m'}e^{\ell,m}_{z},e^{\ell,m}_{z}\rangle=0, \qquad z\in \Om_\go.
\end{equation*}
Suppose that $(\ell,m)\in \Sigma_\go$ and $1\le k\le \go-1$. Then by Lemma \ref{lem:facts}
\begin{align*}
	\tr[X^{k\ell}Z^{k m}\ketbra{e^{\ell,m}_{z}}{e^{\ell,m}_{z}}]
	&=\om^{-\frac{1}{2}k(k-1)\ell m}\langle (X^\ell Z^m)^k e^{\ell,m}_{z},e^{\ell,m}_{z}\rangle\\
	&=\om^{-\frac{1}{2}k(k-1)\ell m}z^k, \qquad z\in \Om_\go.
\end{align*}
All combined, for all $1\le k\le \go-1,(\ell,m)\in \Sigma_\go$ and $1\le i\le n$ we get
\begin{align*}
	\tr[X^{k\ell} Z^{k m } \rho_i(\vec{\om})]
	&=\frac{1}{\go+1}\sum_{(\ell',m')\in \Sigma_\go}\langle e^{\ell',m'}_{\om^{\ell',m'}_i}, X^{k\ell}Z^{ km}e^{\ell',m'}_{\om^{\ell',m'}_i}\rangle\\
	&=\frac{1}{\go+1}\langle e^{\ell,m}_{\om^{\ell,m}_i}, X^{k\ell}Z^{ km}e^{\ell,m}_{\om^{\ell,m}_i}\rangle\\
	&=\frac{1}{\go+1}\om^{-\frac{1}{2}k(k-1)\ell m}(\om^{\ell,m}_i)^k.
\end{align*}

Recall that any degree-$d$ polynomial in $M_\go(\C)^{\otimes n}$ is a linear combination of monomials
\[
A(\vec{k},\vec{\ell},\vec{m};\vec{i}):=
\cdots \otimes X^{k_1\ell_1}Z^{k_1 m_1}\otimes \cdots \otimes X^{k_\kappa\ell_\kappa}Z^{k_\kappa m_\kappa}\otimes\cdots
\]
where
\begin{itemize}
	\item $\vec{k}=(k_1,\dots,k_\kappa)\in \{1,\dots, \go-1\}^\kappa$ with $0\le \sum_{j=1}^{\kappa}k_j\le d$;
	\item $\vec{\ell}=(\ell_1,\dots, \ell_\kappa),\vec{m}=(m_1,\dots, m_\kappa)$ with each $(\ell_j,m_j)\in \Sigma_\go$;
	\item $\vec{i}=(i_1,\dots, i_\kappa)$ with $1\le i_1 <\cdots<i_\kappa\le n$;
	\item $X^{k_j\ell_j}Z^{k_j m_j}$ appears in the $i_j$-th place, $1\le j\le \kappa$, and all the other $n-\kappa$ elements in the tensor product are the identity matrices $\un$.
\end{itemize}
So for any $\vec{\om}\in (\Om_\go)^{(\go+1)n}$ of the form \eqref{eq:defn of vec omega} we have from the above discussion that
\begin{align*}
	\tr[A(\vec{k},\vec{\ell},\vec{m};\vec{i})\rho(\vec{\om})]
	&=\prod_{j=1}^{\kappa}\tr[X^{k_j\ell_j}Z^{k_j m_j}\rho_{i_j}(\vec{\om})]\\
	&=\frac{\om^{-\frac{1}{2}\sum_{j=1}^{\kappa}k_j(k_j-1)\ell_j m_j}}{(\go+1)^{\kappa}}(\om^{\ell_1,m_1}_{i_1})^{k_1}\cdots (\om^{\ell_\kappa,m_\kappa}_{i_\kappa})^{k_\kappa}.
\end{align*}
So $\vec{\om}\mapsto 	\tr[A(\vec{k},\vec{\ell},\vec{m};\vec{i})\rho(\vec{\om})]$ is a polynomial on $(\Om_\go)^{(\go+1)n}$ of degree at most $\sum_{j=1}^{\kappa}k_j\le d$. 

Now for general polynomial $A\in M_\go(\C)^{\otimes n}$ of degree $d$:
\begin{equation*}
	A=\sum_{\vec{k},\vec{\ell},\vec{m},\vec{i}} c(\vec{k},\vec{\ell},\vec{m};\vec{i})A(\vec{k},\vec{\ell},\vec{m};\vec{i})
\end{equation*}
where the sum runs over the above $(\vec{k},\vec{\ell},\vec{m};\vec{i})$. This is the Fourier expansion of $A$ and each $c(\vec{k},\vec{\ell},\vec{m};\vec{i})\in \C$ is the Fourier coefficient. So 
\begin{equation*}
	\|\widehat{A}\|_p=\left(\sum_{\vec{k},\vec{\ell},\vec{m},\vec{i}}|c(\vec{k},\vec{\ell},\vec{m};\vec{i})|^p\right)^{1/p}.
\end{equation*}
To each $A$ we assign the function $f_A$ on $(\Om_\go)^{(\go+1)n}$ given by
\begin{align*}
	f_A(\vec{\om})&=\tr[A\rho(\vec{\om})]\\
	&=\sum_{\vec{k},\vec{\ell},\vec{m},\vec{i}} \frac{\om^{-\frac{1}{2}\sum_{j=1}^{\kappa}k_j(k_j-1)\ell_j m_j}c(\vec{k},\vec{\ell},\vec{m};\vec{i})}{(\go+1)^{\kappa}}(\om^{\ell_1,m_1}_{i_1})^{k_1}\cdots (\om^{\ell_\kappa,m_\kappa}_{i_\kappa})^{k_\kappa}.
\end{align*}
Note that this is the Fourier expansion of $f_A$ since the monomials $(\om^{\ell_1,m_1}_{i_1})^{k_1}\cdots (\om^{\ell_\kappa,m_\kappa}_{i_\kappa})^{k_\kappa}$ differ for different $(\vec{k},\vec{\ell},\vec{m},\vec{i})$. Therefore,
\begin{align*}
	\|\widehat{f_A}\|_p
	&=\left(\sum_{\vec{k},\vec{\ell},\vec{m},\vec{i}}\left|\frac{c(\vec{k},\vec{\ell},\vec{m};\vec{i})}{(\go+1)^{\kappa}}\right|^p\right)^{1/p}\\
	&\ge \frac{1}{(\go+1)^{d}}\left(\sum_{\vec{k},\vec{\ell},\vec{m},\vec{i}}|c(\vec{k},\vec{\ell},\vec{m};\vec{i})|^p\right)^{1/p}\\
	&=\frac{1}{(\go+1)^{d}}	\|\widehat{A}\|_p.
\end{align*}
Using the Bohnenblust--Hille inequalities for cyclic groups (Corollary \ref{cor:bh cyclic groups}), we have
\begin{equation*}
	\|\widehat{f_A}\|_{\frac{2d}{d+1}}\le C(d)\|f_A\|_{\Om_\go^{(\go+1)n}}
\end{equation*}
for some $C(d)>0$. All combined, we obtain
\begin{equation*}
	\|\widehat{A}\|_{\frac{2d}{d+1}}
	\le (\go+1)^d	\|\widehat{f_A}\|_{\frac{2d}{d+1}}
	\le (\go+1)^dC(d)\|f_A\|_{(\Om_\go)^{(\go+1)n}}
	\le (\go+1)^dC(d)\|A\|_{\textnormal{op}}\,. \qedhere
\end{equation*}
\end{proof}

\section{Applications to learning}
\label{sec:learning}
We now apply our new BH inequalities to obtain learning results for functions on $\Z_\go^n$ and operators on qudits.
In the latter case, as for qubits \cite{CHP}, this result may be extended to quantum observables of arbitrary complexity.

Fourier sampling is approached differently in the cyclic and qudit contexts, so we isolate the Eskenazis--Ivanisvili approximation principle from the Boolean function learning aspects of \cite{EI22} and include a proof for completeness.
We'll also need it for vectors in $\C$ rather than $\R$ as it appeared originally but the proof is essentially identical.

\begin{theorem}[Generic Eskenazis--Ivanisvili]
\label{thm:generic-EI}
Let $d\in \mathbb{N}$ and $\eta, B> 0$.
Suppose $v,w\in \C^n$ with $\|v-w\|_\infty\leq \eta$ and $\|v\|_{\frac{2d}{d+1}}\leq B$.
Then for $\widetilde{w}$ defined as $\widetilde{w}_j= w_j \mathbbm{1}_{[|w_j| \geq \eta(1+\sqrt{d+1})]}$
we have the bound
\[\|\widetilde{w}-v\|_{2}^2\leq (e^5\eta^2 d B^{2d})^{\frac{1}{d+1}}.\]
\end{theorem}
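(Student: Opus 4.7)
The plan is to apply a thresholding argument in the style of Eskenazis--Ivanisvili. Writing $p := \frac{2d}{d+1} \in (0,2)$ and partitioning $[n]$ into the surviving coordinates $I_+ := \{j : |w_j| \ge \eta(1+\sqrt{d+1})\}$ and the dropped coordinates $I_- := [n] \setminus I_+$, I would split
\[
\|\widetilde w - v\|_2^2 = \sum_{j \in I_+} |w_j - v_j|^2 + \sum_{j \in I_-} |v_j|^2.
\]

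For $j \in I_+$, the hypothesis $\|w-v\|_\infty \le \eta$ gives $|w_j - v_j|^2 \le \eta^2$ immediately, so the first sum is at most $|I_+|\eta^2$. To control $|I_+|$, the key observation is that $j \in I_+$ forces $|v_j| \ge |w_j| - \eta \ge \eta\sqrt{d+1}$; Markov's inequality applied to $\|v\|_p^p \le B^p$ then yields $|I_+| \le (B/(\eta\sqrt{d+1}))^p$, so the first sum is bounded by $\eta^{2-p} B^p (d+1)^{-p/2}$.

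For $j \in I_-$, dually, the bound $|w_j| < \eta(1+\sqrt{d+1})$ combined with $\|w-v\|_\infty \le \eta$ yields the uniform pointwise estimate $|v_j| \le \eta(2+\sqrt{d+1})$. Splitting $|v_j|^2 = |v_j|^{2-p}\cdot|v_j|^p$ and extracting this uniform factor from the $2-p>0$ piece gives $\sum_{j \in I_-}|v_j|^2 \le (\eta(2+\sqrt{d+1}))^{2-p}B^p$. Combining the two estimates and using $2-p = 2/(d+1)$, the total error is at most
\[
\eta^{2/(d+1)}\, B^{2d/(d+1)}\Bigl[(d+1)^{-d/(d+1)} + (2+\sqrt{d+1})^{2/(d+1)}\Bigr].
\]

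The remaining task---which I expect to be the only calculational nuisance---is to check that the bracketed quantity is at most $(e^5 d)^{1/(d+1)}$, equivalently that $\bigl[(d+1)^{-d/(d+1)} + (2+\sqrt{d+1})^{2/(d+1)}\bigr]^{d+1} \le e^5 d$ for every integer $d \ge 1$. Since the first summand is at most $1$ and the second tends to $1$ as $d \to \infty$, the left-hand side grows only polynomially in $d$, dominated by the $(2+\sqrt{d+1})^2 = O(d)$ contribution, and the generous constant $e^5$ absorbs the remaining lower-order factors. No structural idea beyond the thresholding decomposition is required; the essential content is the interplay between the $\ell_\infty$ closeness on kept coordinates and the $\ell_p$ sparsity on dropped ones.
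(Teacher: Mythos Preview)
Your argument is correct and essentially identical to the paper's proof: the same thresholding split, the same Markov-type bound on $|I_+|$ via $|v_j|\ge \eta\sqrt{d+1}$, and the same $|v_j|^{2-p}|v_j|^p$ factoring on $I_-$, arriving at the same expression $\eta^{2/(d+1)}B^{2d/(d+1)}\big[(d+1)^{-d/(d+1)}+(2+\sqrt{d+1})^{2/(d+1)}\big]$. The paper likewise defers the final scalar estimate to \cite[Eqs.~18--19]{EI22}; your asymptotic sketch is on the right track (one finds $(a+b)^{d+1}\sim e(d+1)$ for large $d$, with small $d$ handled directly), though a fully rigorous treatment requires a bit more care than ``dominated by the $O(d)$ contribution.''
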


\begin{proof}
Let $t>0$ be a threshold parameter to be chosen later.
Define $S_t = \{j: |w_j|\geq t\}$ and note from the triangle inequality in $\C$ that
\begin{align}
    |v_j|\geq |w_j|-|v_j-w_j| = t-\eta & \text{ for } j\in S_t
    \label{eq:big-coords}\\
    |v_j|\leq |w_j|+|v_j-w_j| = t+\eta & \text{ for } j\not\in S_t.
    \label{eq:small-coords}
\end{align}
We may also estimate $|S_t|$ as
\begin{equation}
\label{eq:big-coords-cardinality}
|S_t| = \sum_{j\in S}\frac{|v_j|}{|v_j|}\overset{\eqref{eq:big-coords}}{\leq} (t-\eta)^{-\frac{2d}{d+1}}\sum_{j\in[n]}|v_j|^{\frac{2d}{d+1}}\leq (t-\eta)^{-\frac{2d}{d+1}}\|v\|_{\frac{2d}{d+1}}^{\frac{2d}{d+1}}\leq (t-\eta)^{-\frac{2d}{d+1}}B^{\frac{2d}{d+1}}.
\end{equation}
With $\widetilde{w}^{(t)} := (w_j\mathbbm{1}_{[w_j\geq t]})_{j=1}^n$, we find
\begin{align*}
\|\widetilde{w}^{(t)}-v\|_2^2&=\sum_{j\in S_t} |w_j-v_j|^2 + \sum_{j\not\in S_t}|v_j|^2 \overset{\eqref{eq:small-coords}}{\leq} |S_t|\eta^2 + (t+\eta)^{\frac{2}{d+1}}\sum_{j\in[n]}|v_j|^{\frac{2d}{d+1}}\\
&\overset{\eqref{eq:big-coords-cardinality}}{\leq} B^{\frac{2d}{d+1}}\left(\eta^2(t-\eta)^{-\frac{2d}{d+1}}+(t+\eta)^{\frac{2}{d+1}}\right).
\end{align*}
Choosing $t=\eta(1+\sqrt{d+1})$ then yields $\widetilde{w}$ with error, after some careful scalar estimates,
\[
\|\widetilde{w}-v\|_2^2\leq (e^5\eta^2 d B^{2d})^{\frac{1}{d+1}}.
\]
See \cite[Eqs. 18 \& 19]{EI22} for details on the scalar estimates.
\end{proof}

In the context of low-degree learning, $v$ is the true vector of Fourier coefficients, and $w$ is the vector of empirical coefficients obtained through Fourier sampling.

\subsection{Cyclic group learning}
\begin{theorem}
    Let $f:\Z_\go^n\to \mathbb{D}$ be a degree-$d$ function.
    Then with $(\log \go)^{\mathcal{O}(d^2)}\log(n/\delta)\eps^{-d-1}$ independent random samples $(x,f(x))$, $x\sim\mathcal{U}(\Z_\go^n)$, we may with confidence $1-\delta$ learn a function $\widetilde{f}:\Z_\go^n\to\C$ with $\|f-\widetilde{f}\|_2^2\leq \eps$.
\end{theorem}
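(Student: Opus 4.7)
The plan is to run the Eskenazis--Ivanisvili recipe \cite{EI22} on the cyclic group, with our new cyclic Bohnenblust--Hille inequality (Corollary \ref{cor:bh cyclic groups}) playing the role of the sparsity-promoting $\ell_p$ bound. Concretely, I would proceed in four steps: Fourier sampling of low-degree coefficients, a union-bounded concentration estimate, invocation of the generic thresholding theorem, and finally a calibration of the threshold parameter.

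First, for each multi-index $\alpha\in\{0,1,\ldots,\go-1\}^n$ with $|\alpha|\le d$, the coefficient $\widehat{f}(\alpha)$ satisfies
\[
\widehat{f}(\alpha)=\E_{x\sim \mathcal{U}(\Z_\go^n)}\Big[f(x)\prod_{j=1}^n \om^{-\alpha_j x_j}\Big],
\]
so the random variable $Y_\alpha(x):=f(x)\prod_j \om^{-\alpha_j x_j}$ is an unbiased estimator with $|Y_\alpha|\le 1$ since $\|f\|_\infty\le 1$ and $|\om|=1$. Applying Hoeffding's inequality to the real and imaginary parts of an empirical average of $s$ such samples yields an estimate $w_\alpha$ with $|w_\alpha-\widehat{f}(\alpha)|\le\eta$ except with probability at most $4\exp(-c\,s\,\eta^2)$ for a universal $c>0$. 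The number of $\alpha$ in play is $M\le \binom{n+d}{d}=\mathcal{O}(n^d)$, so union-bounding and requiring total failure probability $\delta$ leaves us needing
\[
s=\Theta\!\left(\frac{\log(M/\delta)}{\eta^2}\right)=\mathcal{O}\!\left(\frac{d\log n+\log(1/\delta)}{\eta^2}\right).
\]

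Second, apply Theorem \ref{thm:generic-EI} with $v$ the true Fourier vector restricted to $\{\alpha:|\alpha|\le d\}$ and $w$ the empirical vector. Corollary \ref{cor:bh cyclic groups} applied to $f$ (whose sup-norm over $\Om_\go^n$ is at most $1$) supplies the required $\ell_{2d/(d+1)}$ bound with constant $B:=(\mathcal{O}(\log\go))^{d+\sqrt{d\log d}}$. Thresholding entries of $w$ at level $\eta(1+\sqrt{d+1})$ produces $\widetilde{w}$, and defining $\widetilde{f}:=\sum_{|\alpha|\le d}\widetilde{w}_\alpha\prod_j \om^{\alpha_j x_j}$ gives, via Plancherel on $\Z_\go^n$,
\[
\|f-\widetilde{f}\|_2^2=\|v-\widetilde{w}\|_2^2\le (e^5\eta^2 d\,B^{2d})^{\frac{1}{d+1}}.
\]

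Third, solve for the threshold: set $\eta^2=\eps^{d+1}/(e^5 d\,B^{2d})$ so that the right-hand side is exactly $\eps$. Since $B^{2d}=(\log\go)^{\mathcal{O}(d^2)}$, we get $1/\eta^2=(\log\go)^{\mathcal{O}(d^2)}\eps^{-(d+1)}$, and plugging back into the sample bound yields the claimed
\[
s=(\log\go)^{\mathcal{O}(d^2)}\log(n/\delta)\,\eps^{-(d+1)}.
\]
The algorithm is polynomial-time in $s$ and $M=\mathrm{poly}(n)$: enumerate the low-degree multi-indices, compute the empirical averages, truncate, and output $\widetilde{f}$ in sparse Fourier form.

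I do not expect any truly hard step; the chief care is bookkeeping. The only mild subtlety is that $\widehat{f}(\alpha)\in\C$ rather than $\R$, so the Hoeffding bound must be applied componentwise and Theorem \ref{thm:generic-EI} used in its complex form (which the authors have stated for $\C^n$). Everything else is essentially plug-and-play once the dimension-free cyclic BH inequality is in hand---which is precisely what makes the $n$-dependence collapse from $\mathrm{poly}(n)$ to $\log n$.
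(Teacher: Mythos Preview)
Your proposal is correct and follows essentially the same approach as the paper: empirical Fourier sampling with a Hoeffding/Chernoff bound applied to real and imaginary parts, a union bound over the $\mathrm{poly}(n)$ low-degree multi-indices, application of the generic Eskenazis--Ivanisvili thresholding (Theorem~\ref{thm:generic-EI}) with $B$ supplied by the cyclic Bohnenblust--Hille constant, and the same calibration of $\eta$. The only cosmetic difference is that you count multi-indices via $\binom{n+d}{d}$ whereas the paper writes $\sum_{k=0}^d\binom{n}{k}$; both are $\mathcal{O}(n^d)$ and lead to the same $\log(n/\delta)$ dependence.
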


\begin{proof}
Let $f=\sum_{\al}\widehat{f}(\al)z^\al$ be the Fourier expansion. For a number of samples $s$ to be specified later, sample $x^{(1)},\ldots, x^{(s)}\overset{\mathsf{iid}}{\sim}\mathcal{U}(\{0,1,\ldots,\go-1\}^n)$ and for each $\alpha\in\Z_\go^n$ with $|\alpha|\leq d$, form the empirical Fourier coefficient
\[
w_\alpha:=\frac{1}{s}\sum_{j=1}^{s}f(x^{(j)})\omega_\go^{-\sum_{\ell=1}^n\alpha_\ell x^{(j)}_\ell},
\]
where $\omega_\go=e^{\frac{2\pi i}{\go}}$ and $x^{(j)}=(x^{(j)}_1,\dots, x^{(j)}_n)$.
%\[\textstyle w_\alpha := \frac{1}{s}\sum_{j=1}^s f(x_j)\prod_{j=1}^{n}\omega_\go^{\alpha_j}\,.\]
% Then $\E[{w}_\alpha]=\widehat{f}(\alpha)$ and $|w_\alpha-\widehat{f}(\alpha)|\leq 2$.
Then ${w}_\alpha$ is a sum of bounded i.i.d. random variables with expected value $\widehat{f}(\alpha)$, so Chernoff gives
\begin{align*}
\Pr[|\widehat{f}(\alpha)-w_\alpha|\geq \eta]&=\Pr\big[\Re\big(\widehat{f}(\alpha)- w_\alpha\big)^2+\Im\big(\widehat{f}(\alpha)-w_\alpha\big)^2\geq \eta^2\big]\\
&\leq \Pr\big[|\Re\big(\widehat{f}(\alpha)-w_\alpha\big)|\geq \eta/\sqrt2\big]+\Pr\big[|\Im\big(\widehat{f}(\alpha)-w_\alpha\big)|\geq \eta/\sqrt2\big]\\
&\leq 4\exp\left(-s\eta^2/4\right)\,.
\end{align*}
So the probability we simultaneously estimate all nonzero Fourier coefficients of $f$ to within $\eta$ is
\[\Pr\big[|\widehat{f}(\alpha)-w_\alpha|< \eta\text{ for all } \alpha \text{ with } |\alpha|\leq d\big]\geq 1- 4\sum_{k=0}^d\binom{n}{k}\exp\left(\frac{-s\eta^2}{4}\right)\,,\]
which in turn we will require to be $\geq 1-\delta$.

Now applying Theorem \ref{thm:generic-EI} to obtain $\widetilde{w}$ and recalling $\|\widehat f\|_{\frac{2d}{d+1}}\leq \BH_{\Z_\go}^{\le d}\|f\|_\infty=\BH^{\le d}_{\Z_\go}$ we have that with probability $1-\delta$, the function $\widetilde{f}(x) := \sum_{\alpha}\widetilde{w}_\alpha \prod_{j=1}^n\omega_\go^{\alpha_jx_j}$ has $L_2$ error
\begin{equation}
\label{eq:cyclic-err}
\|\widetilde{f}-f\|_2^2 \overset{\text{(Parseval)}}{=} \sum_{\alpha}|\widehat{f}(\alpha)-\widetilde{\omega}_\alpha|^2\leq \big(e^5\eta^2d(\BH_{\Z_\go}^{\le d})^{2d}\big)^{\frac{1}{d+1}}\,.
\end{equation}
So in order to achieve $\|\widetilde{f}-f\|_2^2\leq \eps$ it is enough to pick $\eta^2=\eps^{d+1}e^{-5}d^{-1}(\BH_{\Z_\go}^{\le d})^{-2d}$, which entails by standard estimates that taking a number of samples $s$ with
\[s\geq\frac{4e^5d^2(\BH_{\Z_\go}^{\le d})^{2d}}{\eps^{d+1}}\log\left(\frac{4en}{\delta}\right)\]
suffices.
\end{proof}

\subsection{Qudit learning}
We first pursue a learning algorithm that finds a (normalized) $L_2$ approximation to a low-degree operator $\cA$.
Then we'll see how this extends to an algorithm finding an approximation $\widetilde{\cA}$ with good mean-squared error over certain distributions of states for target operators $\cA$ of any degree.

We make a couple assumptions for clarity and brevity.
First, we assume the unknown observable $\cA$ has operator norm $\|\cA\|_{\textnormal{op}}\leq 1$.
We will also assume that for a mixed state $\rho$ the quantity $\tr[\cA\rho]$ can be directly computed.
Of course this is not true in practice; in the lab one must take many copies of $\rho$, collect observations $m_1,\ldots, m_s$ and form the estimate $\frac{1}{s}\sum_jm_j\approx \tr[\cA\rho]$.
The analysis required to relax these assumptions from the following results are routine so we omit them.
% For refined optimizations of this kind we refer the reader to \cite{CHP} and related work.

\subsubsection{Low-degree Qudit learning}
\begin{theorem}[Low-degree Qudit Learning]
    \label{thm:qudit-ld-learning}
    Let $\cA$ be a degree-$d$ observable on $n$ qudits with $\|\cA\|_\textnormal{op}\leq 1$.
    Then there is a collection $S$ of product states such that with a number
    \[\mathcal{O}\Big(\big( \go\|\cA\|_\mathrm{op}\big)^{C\cdot d^2}d^2\eps^{-d-1}\log\left(\tfrac{n}{\delta}\right)\Big)\] of samples of the form $(\rho, \tr[\cA \rho]),$ $\rho\sim\mathcal{U}(S)$, we may with confidence $1-\delta$ learn an observable $\widetilde{\cA}$ with $\|\cA-\widetilde{\cA}\|_2^2\leq \eps$.
\end{theorem}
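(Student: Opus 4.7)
The plan is to combine the Gell-Mann Bohnenblust--Hille inequality (Theorem \ref{thm:GM}) with the generic Eskenazis--Ivanisvili principle (Theorem \ref{thm:generic-EI}), performing Fourier sampling through the product states $r(\vec x,\vec y,\vec z)$ constructed in Lemma \ref{action}. The key point is that these states lift the qudit learning problem to a \emph{scalar} polynomial learning problem on the hypercube $H_\go^n=\{-1,1\}^{n(\go^2-1)}$, at which point the hypercube-style argument used in the cyclic-learning proof above applies almost verbatim.

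Concretely, I would first draw $s$ iid uniform samples $\vec u^{(j)}\in H_\go^n$ and receive the observations $f_\cA(\vec u^{(j)}) := \tr[\cA\cdot r(\vec u^{(j)})]$. By the calculation in the proof of Theorem \ref{thm:GM}, $\vec u\mapsto f_\cA(\vec u)$ is a degree-$d$ polynomial on the hypercube whose Fourier coefficients are scaled Gell-Mann coefficients: precisely, $\widehat{f_\cA}(\alpha)=\gamma_{|\alpha|}\,\widehat{\cA}(\alpha)$, where $\gamma_\kappa = \big(\sqrt{\go/2}/(3\binom{\go}{2})\big)^\kappa$. For each $\alpha\in\Lambda_\go^n$ with $|\alpha|\le d$ I would form the usual empirical hypercube Fourier coefficient $c_\alpha$ of $f_\cA$ and rescale by $\gamma_{|\alpha|}^{-1}$ to obtain an unbiased estimator $w_\alpha:=c_\alpha/\gamma_{|\alpha|}$ of $\widehat{\cA}(\alpha)$.

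Since $|f_\cA(\vec u)|\le \|\cA\|_{\mathrm{op}}\le 1$, a Hoeffding bound together with a union bound over the at most $(n\go^2)^d$ relevant indices $\alpha$ gives $|w_\alpha-\widehat{\cA}(\alpha)|\le \eta$ uniformly with probability $\ge 1-\delta$, provided $s\gtrsim \gamma_d^{-2}\eta^{-2}\log((n\go^2)^d/\delta)$. I would then feed $v=\widehat{\cA}$ and $w=(w_\alpha)$ into Theorem \ref{thm:generic-EI} with $B=\BH_{\mathrm{GM}(\go)}^{\le d}$. The hard-thresholded vector $\widetilde w$ defines the candidate $\widetilde{\cA}:=\sum_\alpha\widetilde w_\alpha M_\alpha$, and by Plancherel (the Gell-Mann basis is orthonormal with respect to the normalized trace inner product),
\[
\|\cA-\widetilde{\cA}\|_2^2 \;=\; \|\widehat{\cA}-\widetilde w\|_2^2 \;\le\; \bigl(e^5\,\eta^2\,d\,(\BH_{\mathrm{GM}(\go)}^{\le d})^{2d}\bigr)^{1/(d+1)}.
\]
Choosing $\eta$ so that the right-hand side is at most $\eps$ and substituting back into the Hoeffding sample bound yields the announced count: $\gamma_d^{-2}=\go^{O(d)}$ combines with $(\BH_{\mathrm{GM}(\go)}^{\le d})^{2d}=\go^{O(d^2)}$ (via Theorem \ref{thm:GM} and the hypercube BH bound) to give the $\go^{O(d^2)}$ lead factor, while the $\log((n\go^2)^d/\delta)$ term contributes the $d^2\log(n/\delta)$ factor.

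The step I expect to require the most care is propagating the scaling factors $\gamma_\kappa$: the rescaling $w_\alpha = c_\alpha/\gamma_{|\alpha|}$ inflates the $\ell_\infty$ error by $\gamma_d^{-1}=\go^{O(d)}$, and this is where an extra $d$ in the exponent of the $\go$-dependence is picked up. Beyond this bookkeeping, however, I do not anticipate a conceptual obstacle: the Gell-Mann BH inequality of Theorem \ref{thm:GM} already supplies the approximate sparsity that drives Theorem \ref{thm:generic-EI}, and the states $r(\vec u)$ from Lemma \ref{action} provide a clean product-state Fourier-sampling mechanism that is valid uniformly in $\go\ge 2$ (no primality assumption needed, as would be required if one used the Heisenberg--Weyl reduction of Theorem \ref{thm:bh HW} instead).
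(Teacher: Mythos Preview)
Your proposal is correct and follows essentially the same route as the paper: both proofs sample the product states $r(\vec{\boldsymbol{x}})$ of Lemma~\ref{action} to reduce to hypercube Fourier sampling, rescale empirical coefficients by the factor $c^{-|\alpha|}=\gamma_{|\alpha|}^{-1}$, apply Chernoff/Hoeffding with a union bound, and then invoke Theorem~\ref{thm:generic-EI} with $B$ given by the Gell--Mann BH constant from Theorem~\ref{thm:GM}. The only cosmetic differences are that the paper retains the explicit factor $\|\cA\|_{\mathrm{op}}$ in $B$ (useful downstream even though $\|\cA\|_{\mathrm{op}}\le 1$ here), and that the $d^2$ in the final sample count arises as one factor of $d$ from $\eta^{-2}$ and one from the union-bound cardinality rather than both coming from the $\log$ term as you suggest.
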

Here $\|\cA\|_2$ denotes the normalized $L_2$ norm induced by the inner product $\langle A,B\rangle:=\go^{-n}\tr[A^\dagger B]$.
Also, we choose to include explicit mention of $\|\cA\|_\mathrm{op}$ here as it will be useful later.
For applications it is natural to assume $\|\cA\|_\mathrm{op}$ is bounded independent of $n$.

\begin{proof}
    We will first pursue an $L_\infty$ estimate of the Fourier coefficients in the Gell-Mann basis.
    To that end, sample $\vec{\boldsymbol{x}}_1,\ldots, \vec{\boldsymbol{x}}_s\overset{\mathsf{iid}}{\sim} \{-1,1\}^{n(\go^2-1)}$.
    As in the proof of Theorem \ref{thm:GM}, for any such $\vec{\boldsymbol x}$ we partition indices as $\vec{\boldsymbol{x}} = (\boldsymbol{x}_1,\ldots, \boldsymbol{x}_n)\in(\{-1,1\}^{\go^2-1})^n$ with each $\boldsymbol{x}_\ell$, $1\leq\ell\leq n$, corresponding to a qudit.
    Each $\boldsymbol{x}_\ell$ is further partitioned as
    \[\boldsymbol{x}_\ell=(x^{(\ell)},y^{(\ell)},z^{(\ell)})\in\{-1,1\}^{\binom{\go}{2}}\times\{-1,1\}^{\binom{\go}{2}}\times\{-1,1\}^{\go-1}\,,\]
    with each sub-coordinate associated with a specific Gell-Mann basis element for that qudit.

    Again for each $\vec{\boldsymbol{x}}$, for each qudit $\ell\in[n]$ form the mixed state
    \[r(x^{(\ell)},y^{(\ell)},z^{(\ell)})=\frac{1}{3\binom{\go}{2}}\left(\sum_{1\le j<k\le \go}  A_{jk}^{(x^{(\ell)}_{jk})} + \sum_{1\le j<k\le \go} B_{jk}^{(y^{(\ell)}_{jk})} + \sum_{m=1}^{\go-1} z^{(\ell)}_m \tfrac{1}{\sqrt{2\go}}\bC_m+ \tfrac{\go-1}{2}\cdot  \un\right).\]
    Then we may define for $\vec{\boldsymbol{x}}$ the $n$ qudit mixed state
    \[r(\vec{\boldsymbol{x}}) = \bigotimes_{\ell=1}^{n}r(x^{(\ell)},y^{(\ell)},z^{(\ell)})\]
    and consider the function
    \[f_\cA(\vec{\boldsymbol{x}}):=\tr[\cA \cdot r(\vec{\boldsymbol{x}})].\]
    Let $S(\alpha)$ denote the index map from the GM basis to subsets of $[n(\go^2-1)]$.
    With these states in hand and in view of the identity
    \[\widehat{f_{\cA}}\big(S(\alpha)\big)=c^{|\alpha|}\widehat{\cA}(\alpha) \quad \text{with} \quad c:= \frac{\sqrt{\go/2}}{3\binom{\go}{2}}\;<1,\] we may now define the empirical Fourier coefficients
    \[\cW(\alpha) = c^{-|\alpha|}\cdot\frac{1}{s}\sum_{t=1}^sf_{\cA}(\vec{\boldsymbol{x}}_t)\prod_{j\in S(\alpha)}x_j \;=\; c^{-|\alpha|}\frac{1}{s}\sum_{t=1}^s\tr[\cA\cdot r(\vec{x})]\prod_{j\in S(\alpha)}x_j.\]
    The coefficient $\cW(\alpha)$ is a sum of bounded i.i.d. random variables each with expectation $\widehat{\cA}(\alpha)$, so by Chernoff we have
    \[\Pr\big[|\cW(\alpha)-\widehat{\cA}(\alpha)|\geq \eta\big]\leq 2\exp(-s\eta^2c^{|\alpha|})\,.\]
    Taking the union bound, we find as before the chance of achieving $\ell_\infty$ error $\eta$ is:
    \[\Pr\big[|\cW(\alpha)-\widehat{\cA}(\alpha)|<\eta\text{ for all $\alpha$ with } |\alpha|\leq d\big] \geq 1-2\sum_{k=0}^d\binom{n}{k}\exp(-s\eta^2c^{d})\,,\]
    which again we shall require to be $\geq 1-\delta$.

    Applying Theorem \ref{thm:generic-EI} to obtain $\widetilde{\cW}$ and recalling $\|\widehat{\cA}\|_{\frac{2d}{d+1}}\leq \BH^{\leq d}_{\mathrm{GM}(\go)}\|\cA\|_\text{op}$ we find the estimated operator
    \[\widetilde{\cA}:=\sum_\alpha\widetilde{\cW}(\alpha)M_\alpha\]
    has $L_2$-squared error
    \[\|\widetilde{\cA}-\cA\|_2^2\overset{\text{(Parseval)}}{=}\sum_\alpha\big|\widetilde{\cW}(\alpha)-\widehat{\cA}(\alpha)\big|^2\leq \big(e^5\eta^2d(\BH^{\leq d}_{\mathrm{GM}(\go)}\|\cA\|_\text{op})^{2d}\big)^{\frac{1}{d+1}}\,.\]
    Thus to obtain error $\leq \eps$ it suffices to pick $\eta^2= \eps^{d+1}e^{-5}d^{-1}(\BH^{\leq d}_{\mathrm{GM}(\go)}\|\cA\|_\text{op})^{-2d}$, which entails by standard estimates that the algorithm will meet the requirements with a sample count of
    \[s\geq e^6\go^{3/2}d^2(\BH_{\mathrm{GM}(\go)}^{\leq d}\|\cA\|_\text{op})^{2d}\log\left(\tfrac{2en}{\delta}\right)\eps^{-d-1}\,. \qedhere\]
\end{proof}

\subsubsection{Learning arbitrary qudit observables}
As observed by Huang, Chen, and Preskill in \cite{CHP}, there are certain distributions $\mu$ of input states $\rho$ for which a low-degree truncation $\widetilde\cA$ of any observable $\cA$ gives a suitable approximation as measured by $\E_{\rho\sim \mu}|\tr[\widetilde\cA\rho]-\tr[\cA\rho]|^2$.
This observation extends easily to qudits, which in turn ends up generalizing the phenomenon in the context of qubits as well.
\label{sec:learning-arb}
\begin{definition}
    For a 1-qudit unitary $U$ let $U_j:=\mathbf{I}^{\otimes j-1}\otimes U\otimes \mathbf{I}^{\otimes n-j}$.
    Then for a probability distribution $\mu$ on $n$-qudit densities and $j\in[n]$ let $\mathop{\mathrm{Stab}}_j(\mu)$ be the set of $U\in \mathrm{U}(\go)$ such that for all densities $\rho$,
    \[\mu(\rho) = \mu(U_j\rho U_j^{\dagger})\,.\]
\end{definition}
\begin{remark}
    For a set $S$ of states, define $\mathop{\mathrm{Stab}}_j(S)=\{U\in\mathrm{U}(\go):U_jSU_j^\dagger\subseteq S\}$.
    Then it can be seen easily that $\mathop{\mathrm{Stab}}_j(\mu)$ is equal to the intersection of the stabilizers of the level sets of $\mu$.
    That is, $\mathop{\mathrm{Stab}}_j(\mu)=\bigcap_{0\leq r\leq 1}\mathop{\mathrm{Stab}}_j\big(\mu^{-1}(r)\big)$.
\end{remark}

% \begin{itemize}
%     \item Question: is $Stab_j$ just the intersection of the stabilizers (standardly defined) of the level sets of $\mu$?
%     That is, $Stab_j(\mu)=\cap_{0<t\leq 1}Stab_j(\mu^{-1}(t))$
%     \item Question: if $\mu$ has nontrivial $Stab_j$ for all $j$, is it a product distribution?
% \end{itemize}
Recall the definition of a unitary $t$-design.

\begin{definition}
    Consider $P_{t,t}(U)$, a polynomial of degree at most $t$ in the matrix elements of unitary $U\in\mathrm{U}(\go)$ and of degree at most $t$ in the matrix elements of $U^\dagger$.
    Then a finite subset $S$ of the unitary group $\mathrm{U}(\go)$ is a \emph{unitary t-design} if for all such $P_{t,t}$,
    \[\frac{1}{|S|}\sum_{U\in S}P_{t,t}(U)=\E_{U\sim\mathrm{Haar}(\mathrm{U}(\go))}[P_{t,t}(U)]\,.\]
\end{definition}

We are ready to name the distributions for which low-degree truncation is possible without losing much accuracy.
\begin{definition}
    Call a distribution $\mu$ on $n$-qudit densities \emph{locally 2-design invariant} (L2DI) if for all $j\in[n]$, $\mathrm{Stab}_j(\mu)$ contains a unitary $2$-design.
\end{definition}

Of course, the $n$-fold tensor product of Haar-random qudits is an L2DI distribution, but there are many other possible distributions and in general they can be highly entangled.
When $\go=2$ and the $2$-design leaving $\mu$ locally invariant is the single-qubit Clifford group, such distributions are termed \emph{locally flat} in \cite{CHP}.
For any prime $\go$ the Clifford group on $\mathcal{H}_\go$ is a 2-design \cite{GAE}.
Importantly, however, any distribution just on classical inputs $\ket{x}, x\in \{0,1,\ldots, \go-1\}^n$ is not L2DI, as a consequence of the following general observation:

\begin{proposition}
    Suppose $\mu$ is an L2DI distribution over pure product states $\rho = \bigotimes_{j=1}^n\ketbra{\psi_j}{\psi_j}$.
    For $j\in[n]$ define $S_j=\{\rho_{\{j\}}:\rho\in\mathrm{supp}(\mu)\}$.
    Then $|S_j|\geq \go^2$.
\end{proposition}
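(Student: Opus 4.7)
The plan is to turn the 2-design sitting inside $\mathrm{Stab}_j(\mu)$ into a \emph{state} 2-design on $\C^\go$ realized by single-site marginals, and then invoke the standard absolute lower bound of $\go^2$ on the size of any such design. First, pick any $\rho_0=\bigotimes_k\ketbra{\psi_k^0}{\psi_k^0}\in\mathrm{supp}(\mu)$, set $\pi_0=\ketbra{\psi_j^0}{\psi_j^0}$, and let $T\subseteq\mathrm{Stab}_j(\mu)$ be the promised unitary 2-design. For each $U\in T$, the state $U_j\rho_0 U_j^\dagger$ is again a pure product state, lies in $\mathrm{supp}(\mu)$ by the invariance $\mu(U_j\rho_0 U_j^\dagger)=\mu(\rho_0)>0$, and has single-site marginal $U\pi_0 U^\dagger$ on qudit $j$. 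Hence $\{U\pi_0 U^\dagger:U\in T\}\subseteq S_j$, and it suffices to bound this orbit from below.

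Next, I would observe that this orbit, weighted uniformly by $1/|T|$, is a (weighted) complex projective 2-design. Indeed, applying the defining identity of the unitary 2-design to the operator $X=\pi_0^{\otimes 2}$ gives
\[
\frac{1}{|T|}\sum_{U\in T}\bigl(U\pi_0 U^\dagger\bigr)^{\otimes 2}
=\int (U\pi_0 U^\dagger)^{\otimes 2}\,dU
=\frac{2\,\Pi_+}{\go(\go+1)},
\]
where $\Pi_+$ is the projector onto the symmetric subspace of $\C^\go\otimes\C^\go$. Collapsing repeated projectors yields distinct points $P_1,\dots,P_N\in S_j$ with positive weights $w_i$ summing to $1$ and $\sum_i w_i P_i^{\otimes 2}=\frac{2\,\Pi_+}{\go(\go+1)}$, so $|S_j|\ge N$.

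To close the argument I would establish $N\ge\go^2$ via a rank/injectivity argument. Using $\Pi_+=\tfrac12(\un+F)$ with $F$ the swap and expanding the trace against $X^{\otimes 2}$, the design identity refines for Hermitian $X\in M_\go(\C)$ to
\[
\sum_{i=1}^N w_i\,(\tr[P_iX])^2=\frac{(\tr X)^2+\tr(X^2)}{\go(\go+1)}.
\]
The linear evaluation map $\Phi\colon\mathrm{Herm}(\go)\to\R^N$ defined by $\Phi(X)_i=\tr[P_iX]$ must then be injective: if $\Phi(X)=0$ the right-hand side vanishes, forcing $\tr(X^2)=0$ and hence $X=0$. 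Since $\dim_\R\mathrm{Herm}(\go)=\go^2$, injectivity yields $N\ge\go^2$. The only non-bookkeeping step is this final rank/injectivity observation; verifying that $U_j$ preserves the pure-product structure and the $\mu$-support along the orbit is routine.
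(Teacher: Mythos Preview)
Your proof is correct and follows essentially the same route as the paper: both take the orbit $\{U\pi_0 U^\dagger:U\in T\}\subseteq S_j$, observe it is a complex-projective 2-design, and conclude $|S_j|\ge \go^2$. The only difference is that the paper cites \cite{dankert} and \cite[Theorem~4]{Scott_2006} for these two facts, whereas you supply a self-contained rank/injectivity argument for the $\go^2$ lower bound---a pleasant addition, but not a change in strategy.
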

\begin{proof}
    By the L2DI property we have a 2-design $X\subseteq\mathrm{SU}(\go)$ under which $\mu$ is locally invariant.
    Let $\rho$ be a pure state in $S_j$ and note that $P:=\{U\rho U^\dagger: U\in X\}\subseteq S_j$.
    On the other hand, $P$ forms a complex-projective 2-design \cite{dankert}.
    And any complex-projective 2-design in $\go$ dimensions must have cardinality at least $\go^2$ \cite[Theorem 4]{Scott_2006}.
\end{proof}

% {\color{red}
% \begin{itemize}
%     \item Question: can we lower-bound the size of the support of these distributions? There are known tight bounds on the size of the support for unitary 2 designs.
%     Potential route: because $Stab_i$ is a group, if it contains a unitary 2-design it must contain a group unitary 2-design.
%     And if it contains a group unitary 2-design, then the orbit of any particular (pure?) state $\rho\in \mathrm{Supp}(\mu)$ is a complex projective 2-design (pg. 22 of https://arxiv.org/pdf/1701.07902.pdf).
%     We may then appeal to lower bounds on CP 2-design size.
%     \item Question: if a discrete set of pure states is stabilized by a unitary 2-design, does it constitute an IC-PVM?
% \end{itemize}
% }

The truncation theorem for qudits goes through much the same way as it does for locally flat qubit distributions in \cite{CHP}.
\begin{theorem}
    \label{thm:L2DI-NS}
    Let $\cA$ be an operator on $\mathcal{H}_\go^{\otimes n}$ and $\mu$ a probability distribution on densities.
    Then if $\mu$ is L2DI we have
    \[\E_{\rho\sim\mu}|\tr[\cA\rho]|^2\leq \sum_\alpha\big(\tfrac{\go}{\go^2-1}\big)^{|\alpha|}|\widehat{\cA}(\alpha)|^2\,.\]
\end{theorem}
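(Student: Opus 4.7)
The plan is to adapt the argument of \cite[Lemma 14]{CHP} from locally flat qubit distributions to the qudit L2DI setting. First, expand $\cA$ in an orthonormal basis with respect to $\langle A,B\rangle = \go^{-n}\tr[A^\dagger B]$, for instance the Gell-Mann basis, writing $\cA = \sum_\alpha \widehat{\cA}(\alpha) M_\alpha$ with each $M_\alpha$ Hermitian and traceless whenever $|\alpha|>0$. Then
\begin{equation*}
\E_{\rho\sim\mu}|\tr[\cA\rho]|^2 = \sum_{\alpha,\beta}\widehat{\cA}(\alpha)\overline{\widehat{\cA}(\beta)}\,\E_{\rho\sim\mu}\tr[M_\alpha\rho]\tr[M_\beta\rho].
\end{equation*}

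Next, use the L2DI hypothesis site by site. For each $j\in[n]$ let $X_j\subseteq\mathop{\mathrm{Stab}}_j(\mu)$ be a $2$-design. Since $\mu$ is preserved by conjugation by $U_j$ for any $U\in X_j$, one may insert the average over $X_j$ inside $\E_{\rho\sim\mu}$ for free; the $2$-design property upgrades this to the Haar average over $\mathrm{U}(\go)$. Applying this independently at every site produces Haar integrals $\E_{U_1}\cdots\E_{U_n}$ nested within the $\rho$-expectation.

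The core computation is then the per-site $2$-fold Weingarten identity
\begin{equation*}
\E_U(U\otimes U)X(U^\dagger\otimes U^\dagger) = \tfrac{\tr X-\tr(FX)/\go}{\go^2-1}\,\un\otimes\un \,+\, \tfrac{\tr(FX)-\tr X/\go}{\go^2-1}\,F,
\end{equation*}
applied at each site $j$ to the two-copy state $\rho\otimes\rho$ and contracted against $M_{\alpha_j}\otimes M_{\beta_j}$. Because $\tr M_{\alpha_j}=0$ whenever $\alpha_j\neq 0$, the $\un\otimes\un$ piece kills every active site, leaving only the $F$ term. Orthogonality $\tr[M_{\alpha_j}M_{\beta_j}]=\go\,\delta_{\alpha_j,\beta_j}$ then forces $\beta_j=\alpha_j$ at each active site. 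Iterating over $j=1,\dots,n$, only diagonal terms $\alpha=\beta$ survive, and each active site contributes a factor bounded by $\go/(\go^2-1)$ (using $\tr[\sigma^2]\leq 1$ on the reduced two-copy states that appear). Multiplying yields the weight $(\go/(\go^2-1))^{|\alpha|}$ on $|\widehat{\cA}(\alpha)|^2$, as required.

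The main obstacle is the site-by-site bookkeeping when $\rho$ is entangled across qudits: after integrating $U_1,\dots,U_j$, the effective two-copy state on the remaining sites is not a tensor product, so one must carefully track the reduced two-copy densities through the induction to verify that the product structure of the bound survives intact. Once the recursion is set up correctly, however, the cancellations from the traceless property combined with orthogonality of the basis deliver the target inequality cleanly.
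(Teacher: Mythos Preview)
Your outline follows essentially the same approach as the paper, but two points deserve attention.

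First, you propose to apply the per-site Weingarten twirl to the two-copy state $\rho\otimes\rho$ and then iterate. This is dual (via cyclicity of trace) to what the paper does, which is to twirl the operator side $M_\alpha\otimes M_\beta$. The latter is significantly cleaner: since $M_\alpha\otimes M_\beta = \bigotimes_j(M_{\alpha_j}\otimes M_{\beta_j})$ already factors over sites, all $n$ twirls can be computed independently and simultaneously, and the ``site-by-site bookkeeping for entangled $\rho$'' that you flag as the main obstacle simply disappears. One obtains in one shot
\[
\E_\rho\tr[M_\alpha\otimes M_\beta\cdot\rho\otimes\rho]
=\delta_{\alpha\beta}\Big(\tfrac{\go}{\go^2-1}\Big)^{|\alpha|}\,\E_\rho\tr\!\big[(\mathbf{F}-\tfrac{1}{\go}\mathbf{I})^{\otimes|\alpha|}\,\rho_{S_\alpha}\!\otimes\rho_{S_\alpha}\big],
\]
where $\mathbf{F}$ is the single-site swap and $S_\alpha=\{j:\alpha_j\neq 0\}$.

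Second, and more importantly, your claim that ``each active site contributes a factor bounded by $\go/(\go^2-1)$ using $\tr[\sigma^2]\leq 1$ on the reduced two-copy states'' does not work as a site-by-site argument. The factor $\go/(\go^2-1)$ is exact, and what remains at each active site is the operator $\mathbf{F}-\tfrac{1}{\go}\mathbf{I}$, whose operator norm is $1+\tfrac{1}{\go}>1$; moreover, after partially tracing one active site against $\mathbf{F}-\tfrac{1}{\go}\mathbf{I}$, the residual two-copy operator on the remaining sites is no longer of the form $\sigma\otimes\sigma$ for a density $\sigma$, so a recursive purity bound is not available. The paper instead bounds the full trace at once, via the identity $\mathbf{F}-\tfrac{1}{\go}\mathbf{I}=\tfrac{1}{\go}\sum_{\gamma\neq 0}M_\gamma\otimes M_\gamma$, which yields
\[
\tr\!\big[(\mathbf{F}-\tfrac{1}{\go}\mathbf{I})^{\otimes m}\,\rho\otimes\rho\big]
=\tfrac{1}{\go^m}\!\!\sum_{\beta:\,|\beta|=m}\!\!\tr[M_\beta\rho]^2
\;\leq\;\tfrac{1}{\go^m}\sum_\beta\tr[M_\beta\rho]^2=\tr[\rho^2]\leq 1.
\]
This global sum-of-squares step is the ingredient your plan is missing.
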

The reader will notice a marked similarity to the Fourier-basis expression of noise stability $\E_{x\sim_\delta y}f(x)f(y)=\langle f, \mathrm{T}\hspace{-1pt}_\delta f\rangle = \sum_{S\subseteq[n]}\delta^{|S|}\widehat{f}(S)^2$ for Boolean functions (\emph{e.g.}, \cite{Odonnell}).
\begin{proof}
    Expanding $\cA$ in the Gell-Mann basis we obtain
    \begin{align}
    \E_{\rho\sim\mu}|\tr[\cA\rho]|^2 \;=\; \E_\rho \Big|\sum_{\alpha}\widehat{\cA}(\alpha)\tr[M_\alpha\cdot\rho]\Big|^2
    \;=\; \sum_{\alpha,\beta}\overline{\widehat{\cA}(\alpha)}\widehat{\cA}(\beta)\underbrace{\E_\rho\tr[M_\alpha\otimes M_\beta \cdot \rho\otimes\rho]}_{(*)}\,.
    \label{eq:difference-decomp}
    \end{align}
    
    Now for fixed $\alpha,\beta$ we examine the expectation $(*)$.
    For $j\in[n]$ let $D_j$ be a $2$-design in $\mathrm{Stab}_j(\mu)$ guaranteed to exist by the L2DI property and define $D=\bigotimes_{j=1}^nD_j$ in the natural way.
    Then because $D_j$ leaves $\mu$ invariant,
    \begin{align*}
    (*)=\E_\rho\E_{U\sim\mathcal{U}(D)}\tr[M_\alpha\otimes M_\beta \cdot U\rho U^\dagger\otimes U\rho U^\dagger]
    =\E_\rho\tr\left[\bigotimes_{j=1}^n\E_{U_j\sim \mathcal{U}(D_j)}(U_j^\dagger M_{\alpha_j}U_j)\otimes(U_j^\dagger M_{\beta_j}U_j) \cdot \rho\otimes\rho\right].
    \end{align*}

    For a single coordinate $j$ let us study $(U_j^\dagger M_{\alpha_j}U_j)\otimes(U_j^\dagger M_{\beta_j}U_j)$.
    Certainly if $M_{\alpha_j}=M_{\beta_j}=\mathbf{I}$ then
    \begin{equation}
    \label{eq:identity-case}
        \E_{U_j}[(U_j^\dagger M_{\alpha_j}U_j)\otimes(U_j^\dagger M_{\beta_j}U_j)]=\mathbf{I}\otimes\mathbf{I}\,.
    \end{equation}
    Now suppose at least one of $M_{\alpha_j},M_{\beta_j}\neq \mathbf{I}$.
    Then $\tr[M_{\alpha_j}\otimes M_{\beta_j}]=0$ because non-identity elements of $\mathrm{GM}(\go)$ are traceless.
    Letting $\mathbf{F} := \sum_{j,k=1}^\go\ketbra{jk}{kj}$ denote a generalized \textsf{SWAP} operator, we have by the previous observation and properties of unitary 2-designs (see \emph{e.g.}, \cite[\S II.A]{GAE}) that
    \begin{align}
        \E_{U_j}\big[(U_j^\dagger M_{\alpha_j}U_j)\otimes (U_j^\dagger M_{\beta_j}U_j)\big] &= \frac{\tr[(\mathbf{I}+\mathbf{F})\, M_{\alpha_j}\!\otimes M_{\beta_j}]}{\go^2+\go}\left(\frac{\mathbf{I}+\mathbf{F}}{2}\right) + \frac{\tr[(\mathbf{I}-\mathbf{F})\,M_{\alpha_j}\!\otimes  M_{\beta_j}]}{\go^2-\go}\left(\frac{\mathbf{I}-\mathbf{F}}{2}\right)\nonumber\\
        &= \tr[\mathbf{F}\cdot M_{\alpha_j}\!\otimes M_{\beta_j}]\cdot\frac{1}{\go^2-1}\left(\mathbf{F}-\frac{1}{\go}\mathbf{I}\right)\,.
        \label{eq:2-design}
    \end{align}
    
    Let $S_\alpha = \{j:\alpha_j\neq \mathbf{I}\}$.
    Then combining Eqs. \eqref{eq:identity-case}, \eqref{eq:2-design} with the fact $\tr[\mathbf{F}\cdot M_{\alpha_j}\otimes M_{\beta_j}] = \tr[M_{\alpha_j}M_{\beta_j}] = \go\delta_{{\alpha_j}{\beta_j}}$ we have
    \begin{align}
        \label{eq:star-delta-and-flip}
        (*)&= \delta_{\alpha\beta}\textstyle\left(\frac{\go}{\go^2-1}\right)^{|\alpha|}\E_\rho\tr\big[(\mathbf{F}-\tfrac{1}{\go}\mathbf{I})^{\otimes|\alpha|}\rho_{S_\alpha}\!\otimes\rho_{S_\alpha}\big]\,.
    \end{align}
    Now we claim for any $\rho$ on $m$ qudits,
    \begin{equation}
    \label{eq:F-bound}
        \tr\big[(\mathbf{F}-\tfrac{1}{\go}\mathbf{I})^{\otimes m}\rho\otimes\rho\big]\leq 1\,.
    \end{equation}
    To see this, notice $\mathbf{F}=\frac{1}{\go}\sum_{\alpha}M_\alpha\otimes M_\alpha$, a sum of all GM basis elements doubled-up.
    Hence
    \begin{align*}
        \tr\big[(\mathbf{F}-\tfrac{1}{\go}\mathbf{I})^{\otimes m}\rho\otimes\rho\big] &=\tr\left[{\textstyle \bigotimes_{j=1}^m}\left(\tfrac{1}{\go}{\textstyle\sum_{\beta_j\neq \mathrm{Id}}}\,M_{\beta_j}\otimes M_{\beta_j} \right)\cdot\rho\otimes\rho\right] \\
        &= \frac{1}{\go^n}\sum_{\beta, |\beta|=m}\tr[M_\beta\otimes M_\beta\cdot\rho\otimes\rho]\\
        % &= \frac{1}{\go^n}\sum_{\beta,|\beta|=m}\tr[M_\beta \rho]^2\\
        &\leq \frac{1}{\go^n}\sum_{\text{all}\,\beta}\tr[M_\beta \rho]^2\\
        &= \tr[\rho^2]\;\;\leq\;\; 1\,.
    \end{align*}
    Combining Eqs. \eqref{eq:star-delta-and-flip} and \eqref{eq:F-bound} we in summary have the bound
    \[(*)\leq \delta_{\alpha\beta}\left(\frac{\go}{\go^2-1}\right)^{|\alpha|}\,.\]
    Returning to Eq. \eqref{eq:difference-decomp} we conclude
    \begin{align*}
        \E_{\rho\sim\mu}|\tr[\cA\rho]|^2 \;= \sum_{\alpha} \left(\frac{\go}{\go^2-1}\right)^{-|\alpha|}|\widehat{\cA}(\alpha)|^2\,. & \qedhere
    \end{align*}
\end{proof}

\begin{definition}
    \label{defn:truncation}
    Let $\cA$ be an operator with Gell-Mann decomposition $\cA =\sum_\alpha \widehat{\cA}_\alpha  M_\alpha$.
    Then for $d\in[n]$ define its \emph{degree-$d$ truncation} to be $\cA^{\leq d} = \sum_{\alpha, |\alpha|\leq d}\widehat{\cA}_\alpha M_\alpha$.
\end{definition}
\begin{remark}
The choice of the GM decomposition here is essentially without loss of generality:
consider any basis $B$ for $M_\go(\C)$ containing the identity and define $\cA^{\leq d}_B$ analogously to Definition \ref{defn:truncation}, keeping the definition of degree analogous to that for the GM basis too.
Then we have $\cA^{\leq d}_B=\cA^{\leq d}$, as can be seen easily by expanding one basis in the other.
% In particular, $\cA^{\leq d}_\text{HW}=\cA^{\leq d}$.
\end{remark}

\begin{corollary}
\label{cor:low-degree-approx}
$\E_{\rho\sim \mu}|\tr[\cA\rho]-\tr[\cA^{\leq d}\rho]|^2\leq \left(\frac{\go}{\go^2-1}\right)^{d}\|\cA\|_2^2$.
\end{corollary}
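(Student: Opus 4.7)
The plan is to apply Theorem~\ref{thm:L2DI-NS} to the residual operator $\cB := \cA - \cA^{\leq d}$ and then bound the resulting expression coefficient-by-coefficient. This is the natural move because Theorem~\ref{thm:L2DI-NS} gives a dimension-like weighting $(\go/(\go^2-1))^{|\alpha|}$ on each Fourier coefficient that decays with $|\alpha|$, and the key feature of $\cB$ is that its Fourier support lies entirely in the high-degree regime $|\alpha|>d$.

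First I would observe that $\cB$ has the Gell-Mann expansion $\cB = \sum_{\alpha:\, |\alpha|>d}\widehat{\cA}(\alpha) M_\alpha$, since truncation removes exactly the low-degree part. Applying Theorem~\ref{thm:L2DI-NS} to $\cB$ yields
\[
\E_{\rho\sim\mu}\big|\tr[\cA\rho]-\tr[\cA^{\leq d}\rho]\big|^2 \;=\; \E_{\rho\sim\mu}|\tr[\cB\rho]|^2 \;\leq\; \sum_{\alpha:\, |\alpha|>d}\Big(\frac{\go}{\go^2-1}\Big)^{|\alpha|}|\widehat{\cA}(\alpha)|^2.
\]

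Next, since $\go\geq 2$ implies $\go/(\go^2-1)\leq 1$, the exponentially weighted sum is largest on its lowest-degree terms, so for every $\alpha$ with $|\alpha|>d$ we have $(\go/(\go^2-1))^{|\alpha|}\leq (\go/(\go^2-1))^{d}$. Pulling this uniform bound out of the sum and extending the remaining sum to all $\alpha$ gives
\[
\sum_{\alpha:\, |\alpha|>d}\Big(\tfrac{\go}{\go^2-1}\Big)^{|\alpha|}|\widehat{\cA}(\alpha)|^2 \;\leq\; \Big(\tfrac{\go}{\go^2-1}\Big)^{d}\sum_{\alpha}|\widehat{\cA}(\alpha)|^2.
\]

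Finally I would invoke Parseval's identity with respect to the normalized trace inner product: since the Gell-Mann basis is orthonormal for $\langle A,B\rangle=\go^{-n}\tr[A^\dagger B]$, we have $\sum_\alpha|\widehat{\cA}(\alpha)|^2=\|\cA\|_2^2$, finishing the proof. There is no real obstacle here beyond noticing the monotonicity $\go/(\go^2-1)\leq 1$; the corollary is essentially a one-line consequence of Theorem~\ref{thm:L2DI-NS} combined with Parseval.
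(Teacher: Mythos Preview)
Your proposal is correct and follows essentially the same route as the paper: apply Theorem~\ref{thm:L2DI-NS} to the high-degree residual $\cA-\cA^{\leq d}$, use $\go/(\go^2-1)\le 1$ to replace each weight by $(\go/(\go^2-1))^{d}$, and finish with Parseval. The paper's proof is just a terser two-line version of exactly this argument.
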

\begin{proof}
    We apply Theorem \ref{thm:L2DI-NS} to obtain
    \begin{align*}
    \E_{\rho\sim \mu}|\tr[\cA\rho]-\tr[\cA^{\leq d}\rho]|^2 &= \E_{\rho\sim \mu}|\tr[(\cA-\cA^{\leq d})\rho]|^2\\
    &\leq \sum_{\alpha, |\alpha|>d}\left(\tfrac{\go}{\go^2-1}\right)^{|\alpha|}|\widehat{\cA}(\alpha)|^2 \leq \left(\tfrac{\go}{\go^2-1}\right)^d\|\cA\|_2^2\,.\qedhere
    \end{align*}
\end{proof}

\begin{theorem}
    Let $\cA$ be any observable on $\mathcal{H}_\go^{\otimes n}$, of any degree, with $\|\cA\|_\mathrm{op}\leq 1$.
    Fix an error threshold $\epsilon >0$ and a failure probability $\delta > 0$ and put $t=\log_{\go^2-1}(4/\epsilon)$.
    Then there is a set $S$ of product states such that with a number
    \[s=\mathcal{O}\Big(\go^{3/2}\log\big(\tfrac{n}{\delta}\big)e^{c\cdot\log^2(\frac{1}{\eps})}\|\cA^{\leq t}\|^{2t}_\mathrm{op}\Big)\]
    of samples $(\rho, \tr[\cA \rho])$, $\rho\sim\mathcal{U}(S)$, an approximate operator $\widetilde{\cA}$ may be formed in time $\mathrm{poly}(n)$ with confidence $1-\delta$ such that
    \[\E_{\rho\sim\mu}|\tr[\widetilde{\cA}\rho]-\tr[\cA\rho]|^2\leq \eps\]
    for any L2DI distribution $\mu$.
\end{theorem}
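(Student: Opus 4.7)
The plan is to learn the degree-$t$ truncation $\cA^{\leq t}$ of $\cA$ in the normalized $L_2$ norm using the low-degree algorithm (Theorem \ref{thm:qudit-ld-learning}), and then combine the learning error with the truncation error furnished by Corollary \ref{cor:low-degree-approx} to control the L2DI mean-squared error. Concretely, I would first fix $t=\Theta(\log(1/\eps))$ large enough that $(\go/(\go^2-1))^t \leq \eps/4$ (possible since $\go/(\go^2-1)<1$ for every $\go\ge 2$). Using $\|\cA\|_2 \le \|\cA\|_\textnormal{op}\le 1$, Corollary \ref{cor:low-degree-approx} then directly gives $\E_{\rho\sim\mu}|\tr[\cA\rho]-\tr[\cA^{\leq t}\rho]|^2 \le \eps/4$ for every L2DI distribution $\mu$.

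Next I would apply Theorem \ref{thm:qudit-ld-learning} to the degree-$t$ operator $\cA^{\leq t}$ with target squared-$L_2$ error $\eps/4$, producing with confidence $1-\delta$ an operator $\widetilde\cA$ on the same set $S$ of product states such that $\|\widetilde\cA - \cA^{\leq t}\|_2^2 \leq \eps/4$. A minor technical point is that $\|\cA^{\leq t}\|_\textnormal{op}$ need not be $\leq 1$; however, the proof of Theorem \ref{thm:qudit-ld-learning} depends on $\|\cdot\|_\textnormal{op}$ only through the factor $\|\widehat{\cA^{\leq t}}\|_{\frac{2t}{t+1}}\leq \BH_{\mathrm{GM}(\go)}^{\leq t}\,\|\cA^{\leq t}\|_\textnormal{op}$, so the estimate applies verbatim with $\|\cA^{\leq t}\|_\textnormal{op}^{2t}$ appearing in the sample count. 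To convert this $L_2$ guarantee into an L2DI bound, I would then apply Theorem \ref{thm:L2DI-NS} to $B := \widetilde\cA - \cA^{\leq t}$: since $\go/(\go^2-1)\leq 1$, that theorem gives
\[
\E_{\rho\sim\mu}|\tr[B\rho]|^2 \leq \sum_\alpha \big(\tfrac{\go}{\go^2-1}\big)^{|\alpha|}|\widehat B(\alpha)|^2 \leq \|B\|_2^2 \leq \tfrac{\eps}{4}\,.
\]
Combining the two error sources via $|a+b|^2\leq 2|a|^2+2|b|^2$ yields the advertised bound after rescaling $\eps$ by an absolute constant.

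The only real work is bookkeeping for the sample complexity. Plugging $d=t = O(\log(1/\eps))$ into Theorem \ref{thm:qudit-ld-learning} gives a bound of order $\go^{3/2} (\BH_{\mathrm{GM}(\go)}^{\leq t}\|\cA^{\leq t}\|_\textnormal{op})^{2t}\, t^2\, \eps^{-t-1}\log(n/\delta)$. Using $\BH_{\mathrm{GM}(\go)}^{\leq t}\leq \go^{O(t)}$ (from Theorem \ref{thm:GM}), all the $\go$- and $\eps$-dependent factors outside of $\|\cA^{\leq t}\|_\textnormal{op}^{2t}$ and the $\go^{3/2}$ prefactor coalesce into a single term of the form $e^{c\log^2(1/\eps)}$, matching the claim. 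The main conceptual difficulty --- showing that L2DI distributions admit an accurate degree-$O(\log(1/\eps))$ approximation of \emph{arbitrary} observables --- has already been handled by Corollary \ref{cor:low-degree-approx}, so what remains is essentially a composition of the three ingredients (Theorems \ref{thm:qudit-ld-learning}, \ref{thm:L2DI-NS}, and Corollary \ref{cor:low-degree-approx}) with careful accounting of the threshold $t$ and the operator norm of the truncation.
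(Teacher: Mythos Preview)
Your proposal is correct and follows essentially the same route as the paper: truncate to degree $t\approx\log(1/\eps)$, control the truncation error with Corollary~\ref{cor:low-degree-approx}, learn $\cA^{\leq t}$ in $L_2$ via Theorem~\ref{thm:qudit-ld-learning}, bound the second term by $\|\widetilde\cA-\cA^{\leq t}\|_2^2$ (which is exactly your use of Theorem~\ref{thm:L2DI-NS}), and combine with $|a+b|^2\le 2|a|^2+2|b|^2$. Your observation that Theorem~\ref{thm:qudit-ld-learning} applies with the explicit factor $\|\cA^{\leq t}\|_{\mathrm{op}}^{2t}$ even when this norm exceeds~$1$ is precisely why the paper tracks $\|\cA\|_{\mathrm{op}}$ in that theorem's statement.
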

\begin{proof}
    Choose the truncation degree $d=\log_{\go^2-1}(4/\eps)$.
    Then the triangle inequality and Corollary \ref{cor:low-degree-approx} give
    \begin{align*}
        \E_{\rho\sim \mu}|\tr[\widetilde{\cA}^{\leq d}\rho]-\tr[\cA\rho]|^2 & \leq 2\E_{\rho\sim \mu}|\tr[{\cA}^{\leq d}\rho]-\tr[\cA\rho]|^2+2\E_{\rho\sim \mu}|\tr[\widetilde{\cA}^{\leq d}\rho]-\tr[\cA^{\leq d}\rho]|^2\\
        &\leq 2\left(\tfrac{1}{\go^2-1}\right)^d+2\|\widetilde{\cA}^{\leq d} - \cA^{\leq d}\|_2^2\\
        &\leq \eps/2 + 2\|\widetilde{\cA}^{\leq d} - \cA^{\leq d}\|_2^2\,.
    \end{align*}
    So we need to choose a number of samples such that with confidence $1-\delta$, the low-degree qudit learning algorithm (Theorem \ref{thm:qudit-ld-learning})  yields a $\cA^{\leq d}$ such that $\|\widetilde{\cA}^{\leq d} - \cA^{\leq d}\|_2^2\leq\eps/4$.
    This requires no more than
    \[C\go^{3/2}\log\left(\frac{2en}{\delta}\right)e^{C'\log^2(4/\eps)}\|\cA^{\leq t}\|^{2t}_\mathrm{op}\]
    samples, where $t=\log_{\go^2-1}(4/\epsilon)$ and $C,C'$ are constants $> 1$.
\end{proof}

This learning theorem may be of interest even in the context of qubits.
In particular, for a small divisor $k$ of $n$, a system of $n$ qubits may be interpreted as $n/k$-many $2^k$-level qudits, and there may be interesting distributions over states in this system which are only L2DI when viewed as ``virtual qudits'' in this way. 

\section{Conclusions}

Our efforts to extend recent low-degree learning results to new spaces have led to discoveries in approximation theory, namely a dimension-free Remez-type inequality on the polytorus (Theorem \ref{thm:remez}).
It would be nice to find more applications of this inequality.

In the quantum setting, obtaining qudit BH inequalities required understanding the relationships among eigenspaces of basis elements in the Gell-Mann and Heisenberg--Weyl bases.
This is mostly complete, though it remains open what can be said for the HW basis when $K$ is composite.
And regarding the constants in the quantum BH inequalities, it is interesting to consider whether the exponential dependence of $\BH^{\leq d}_{M_2},\BH^{\leq d}_{\mathrm{GM}(\go)},$ and $\BH^{\leq d}_{\mathrm{HW}(\go)}$ on $d$ is necessary.
(Recall that in the BH inequalities for the polytorus and hypercube, the best known constant is subexponential in $d$).

%%
%% Bibliography
%%

%% Please use bibtex, 

\appendix


\begin{thebibliography}{10}

\bibitem{AEHK}
Ali Asadian, Paul Erker, Marcus Huber, and Claude Kl\"ockl.
\newblock Heisenberg-{W}eyl observables: Bloch vectors in phase space.
\newblock {\em Phys. Rev. A}, 94:010301, Jul 2016.
\newblock URL: \url{https://link.aps.org/doi/10.1103/PhysRevA.94.010301}, \href
  {https://doi.org/10.1103/PhysRevA.94.010301}
  {\path{doi:10.1103/PhysRevA.94.010301}}.

\bibitem{BPS}
Fr\'{e}d\'{e}ric Bayart, Daniel Pellegrino, and Juan~B. Seoane-Sep\'{u}lveda.
\newblock The {B}ohr radius of the $n$-dimensional polydisk is equivalent to
  $(\log n)/n$.
\newblock {\em Advances in Mathematics}, 264:726--746, 2014.
\newblock URL:
  \url{https://www.sciencedirect.com/science/article/pii/S000187081400262X},
  \href {https://doi.org/https://doi.org/10.1016/j.aim.2014.07.029}
  {\path{doi:https://doi.org/10.1016/j.aim.2014.07.029}}.

\bibitem{BH}
H.~F. Bohnenblust and Einar Hille.
\newblock On the absolute convergence of dirichlet series.
\newblock {\em Annals of Mathematics}, 32(3):600--622, 1931.
\newblock URL: \url{http://www.jstor.org/stable/1968255}.

\bibitem{BGKKL}
Jean Bourgain, Jeff Kahn, Gil Kalai, Yitzhak Katznelson, and Nathan Linial.
\newblock The influence of variables in product spaces.
\newblock {\em Israel Journal of Mathematics}, 77(1-2):55--64, February 1992.
\newblock \href {https://doi.org/10.1007/bf02808010}
  {\path{doi:10.1007/bf02808010}}.

\bibitem{dankert}
Christoph Dankert.
\newblock Efficient simulation of random quantum states and operators, 2005.
\newblock URL: \url{https://arxiv.org/abs/quant-ph/0512217}, \href
  {https://doi.org/10.48550/ARXIV.QUANT-PH/0512217}
  {\path{doi:10.48550/ARXIV.QUANT-PH/0512217}}.

\bibitem{DFOOS}
Andreas Defant, Leonhard Frerick, Joaquim Ortega-Cerd{\`{a}}, Myriam
  Ouna\"{\i}es, and Kristian Seip.
\newblock The {B}ohnenblust-{H}ille inequality for homogeneous polynomials is
  hypercontractive.
\newblock {\em Annals of Mathematics}, 174(1):485--497, July 2011.
\newblock \href {https://doi.org/10.4007/annals.2011.174.1.13}
  {\path{doi:10.4007/annals.2011.174.1.13}}.

\bibitem{DGMS}
Andreas Defant, Domingo Garc\'{\i}a, Manuel Maestre, and Pablo Sevilla-Peris.
\newblock {\em Dirichlet Series and Holomorphic Functions in High Dimensions}.
\newblock New Mathematical Monographs. Cambridge University Press, 2019.
\newblock \href {https://doi.org/10.1017/9781108691611}
  {\path{doi:10.1017/9781108691611}}.

\bibitem{DMP}
Andreas Defant, Mieczys{\l}aw Masty{\l}o, and Antonio P{\'{e}}rez.
\newblock On the {F}ourier spectrum of functions on boolean cubes.
\newblock {\em Mathematische Annalen}, 374(1-2):653--680, September 2018.
\newblock \href {https://doi.org/10.1007/s00208-018-1756-y}
  {\path{doi:10.1007/s00208-018-1756-y}}.

\bibitem{DS}
Andreas Defant and Pablo Sevilla-Peris.
\newblock {The Bohnenblust-Hille cycle of ideas from a modern point of view}.
\newblock {\em Functiones et Approximatio Commentarii Mathematici}, 50(1):55 --
  127, 2014.
\newblock \href {https://doi.org/10.7169/facm/2014.50.1.2}
  {\path{doi:10.7169/facm/2014.50.1.2}}.

\bibitem{EI22}
Alexandros Eskenazis and Paata Ivanisvili.
\newblock Learning low-degree functions from a logarithmic number of random
  queries.
\newblock In {\em Proceedings of the 54th Annual ACM SIGACT Symposium on Theory
  of Computing}, STOC 2022, page 203–207, New York, NY, USA, 2022.
  Association for Computing Machinery.
\newblock \href {https://doi.org/10.1145/3519935.3519981}
  {\path{doi:10.1145/3519935.3519981}}.

\bibitem{PhysRevLett.129.160501}
Daniel Gonz\'alez-Cuadra, Torsten~V. Zache, Jose Carrasco, Barbara Kraus, and
  Peter Zoller.
\newblock Hardware efficient quantum simulation of non-abelian gauge theories
  with qudits on {R}ydberg platforms.
\newblock {\em Phys. Rev. Lett.}, 129:160501, Oct 2022.
\newblock URL: \url{https://link.aps.org/doi/10.1103/PhysRevLett.129.160501},
  \href {https://doi.org/10.1103/PhysRevLett.129.160501}
  {\path{doi:10.1103/PhysRevLett.129.160501}}.

\bibitem{GAE}
D.~Gross, K.~Audenaert, and J.~Eisert.
\newblock Evenly distributed unitaries: On the structure of unitary designs.
\newblock {\em Journal of Mathematical Physics}, 48(5):052104, 2007.
\newblock \href {http://arxiv.org/abs/https://doi.org/10.1063/1.2716992}
  {\path{arXiv:https://doi.org/10.1063/1.2716992}}, \href
  {https://doi.org/10.1063/1.2716992} {\path{doi:10.1063/1.2716992}}.

\bibitem{CHP}
Hsin-Yuan Huang, Sitan Chen, and John Preskill.
\newblock Learning to predict arbitrary quantum processes.
\newblock {\em PRX Quantum}, 4(4):040337, 2023.

\bibitem{Huang2020}
Hsin-Yuan Huang, Richard Kueng, and John Preskill.
\newblock Predicting many properties of a quantum system from very few
  measurements.
\newblock {\em Nature Physics}, 16(10):1050--1057, June 2020.
\newblock \href {https://doi.org/10.1038/s41567-020-0932-7}
  {\path{doi:10.1038/s41567-020-0932-7}}.

\bibitem{fouriergrowth}
Siddharth Iyer, Anup Rao, Victor Reis, Thomas Rothvoss, and Amir Yehudayoff.
\newblock Tight bounds on the {F}ourier growth of bounded functions on the
  hypercube.
\newblock {\em Electron. Colloquium Comput. Complex.}, {TR21-102}, 2021.
\newblock URL: \url{https://eccc.weizmann.ac.il/report/2021/102}, \href
  {http://arxiv.org/abs/TR21-102} {\path{arXiv:TR21-102}}.

\bibitem{quant-sim}
Doga~Murat Kurkcuoglu, M.~Sohaib Alam, Joshua~Adam Job, Andy C.~Y. Li,
  Alexandru Macridin, Gabriel~N. Perdue, and Stephen Providence.
\newblock Quantum simulation of $\phi^4$ theories in qudit systems.
\newblock 2021.
\newblock URL: \url{https://arxiv.org/abs/2108.13357}, \href
  {https://doi.org/10.48550/ARXIV.2108.13357}
  {\path{doi:10.48550/ARXIV.2108.13357}}.

\bibitem{LMN}
Nathan Linial, Yishay Mansour, and Noam Nisan.
\newblock Constant depth circuits, fourier transform, and learnability.
\newblock {\em J. ACM}, 40(3):607–620, jul 1993.
\newblock \href {https://doi.org/10.1145/174130.174138}
  {\path{doi:10.1145/174130.174138}}.

\bibitem{Odonnell}
Ryan O'Donnell.
\newblock {\em Analysis of Boolean Functions}.
\newblock Cambridge University Press, 2014.
\newblock \href {https://doi.org/10.1017/CBO9781139814782}
  {\path{doi:10.1017/CBO9781139814782}}.

\bibitem{RWZ22}
Cambyse Rouz\'e, Melchior Wirth, and Haonan Zhang.
\newblock {Quantum {T}alagrand, {KKL} and {F}riedgut's theorems and the
  learnability of quantum {B}oolean functions}.
\newblock 9 2022.
\newblock \href {http://arxiv.org/abs/2209.07279} {\path{arXiv:2209.07279}}.

\bibitem{Scott_2006}
A~J Scott.
\newblock Tight informationally complete quantum measurements.
\newblock {\em Journal of Physics A: Mathematical and General}, 39(43):13507,
  oct 2006.
\newblock URL: \url{https://dx.doi.org/10.1088/0305-4470/39/43/009}, \href
  {https://doi.org/10.1088/0305-4470/39/43/009}
  {\path{doi:10.1088/0305-4470/39/43/009}}.

\bibitem{SVZremez}
Joseph Slote, Alexander Volberg, and Haonan Zhang.
\newblock A dimension-free remez-type inequality on the polytorus, 2023.
\newblock \href {http://arxiv.org/abs/arXiv:2305.10828}
  {\path{arXiv:arXiv:2305.10828}}.

\bibitem{VZ22}
Alexander Volberg and Haonan Zhang.
\newblock Noncommutative {B}ohnenblust--{H}ille inequalities.
\newblock {\em Mathematische Annalen}, page 1–20, 2023.
\newblock \href {https://doi.org/10.1007/s00208-023-02680-0}
  {\path{doi:10.1007/s00208-023-02680-0}}.

\bibitem{quditsurvey}
Yuchen Wang, Zixuan Hu, Barry~C. Sanders, and Sabre Kais.
\newblock Qudits and high-dimensional quantum computing.
\newblock {\em Frontiers in Physics}, 8, 2020.
\newblock URL:
  \url{https://www.frontiersin.org/articles/10.3389/fphy.2020.589504}, \href
  {https://doi.org/10.3389/fphy.2020.589504}
  {\path{doi:10.3389/fphy.2020.589504}}.

\bibitem{Yomdin2011}
Y.~Yomdin.
\newblock Remez-type inequality for discrete sets.
\newblock {\em Israel Journal of Mathematics}, 186(1):45–60, November 2011.
\newblock URL: \url{http://dx.doi.org/10.1007/s11856-011-0131-4}, \href
  {https://doi.org/10.1007/s11856-011-0131-4}
  {\path{doi:10.1007/s11856-011-0131-4}}.

\end{thebibliography}
\end{document}